\newcommand{\deriv}[2][]{\frac{d #1}{d #2}}
\newcommand{\pderiv}[2][]{\frac{\partial #1}{\partial #2}}
\newcommand{\horizontal}{horizontal }
\newcommand{\pathlag}[1]{\mathbb{S}^{#1}}
\newcommand{\pospathlag}[1]{\mathbb{S}^#1_+}
\DeclareMathOperator{\real}{Re}
\DeclareMathOperator{\imaginary}{Im}
\DeclareMathOperator{\vol}{vol}
\theoremstyle{plain}
\newtheorem{thm}{Theorem}[section]
\newtheorem{lemma}[thm]{Lemma}
\newtheorem{prop}[thm]{Proposition}
\newtheorem{cor}[thm]{Corollary}
\theoremstyle{definition}
\newtheorem{dfn}[thm]{Definition}
\theoremstyle{remark}
\newtheorem{rem}[thm]{Remark}
\begin{document}

\begin{abstract}
The space of positive Lagrangians in an almost Calabi-Yau manifold is an open set in the space of all Lagrangian submanifolds. A Hamiltonian isotopy class of positive Lagrangians admits a natural Riemannian metric $\Upsilon$, which gives rise to a notion of geodesics. We study geodesics of positive $O_n(\mathbb{R})$ invariant Lagrangian spheres in $n$-dimensional $A_m$ Milnor fibers. We show the existence and uniqueness of smooth solutions to the initial value problem and the boundary value problem. In particular, we obtain examples of smooth geodesics of positive Lagrangians in arbitrary dimension. As an application, we show that the Riemannian metric $\Upsilon$ induces a metric space structure on the space of positive $O_n(\mathbb{R})$ invariant Lagrangian spheres in the above mentioned Milnor fibers.
\end{abstract}

\title{Geodesics of positive Lagrangians in Milnor fibers}
\author{Jake P. Solomon and Amitai M. Yuval}
\date{Jan. 2016}
\maketitle

\section{Introduction}

Let $(X,\omega)$ be a symplectic manifold of dimension $2n$, and let $L$ be an $n$-dimensional smooth manifold.
Let $\mathcal{L}$ be the space of Lagrangian submanifolds of $X$ that are diffeomorphic to $L$. We think of $\mathcal{L}$ as an infinite dimensional manifold. Assume $L$ is compact and $X$ is almost Calabi-Yau. Namely, $X$ is a K\"{a}hler manifold equipped with a nonvanishing holomorphic form $\Omega$ of type $(n,0)$. We call a Lagrangian submanifold $\Lambda\in\mathcal{L}$  \emph{positive} if $\real\Omega|_\Lambda$ does not vanish anywhere, and we denote by $\mathcal{L}^+\subset\mathcal{L}$ the open subset of all positive Lagrangians. Let $\mathcal{O}\subset\mathcal{L}^+$ be a Hamiltonian isotopy class. For $\Lambda \in \mathcal{O},$ the tangent space $T_\Lambda\mathcal{O}$ is canonically isomorphic to the space
\[
\left\{h\in C^\infty(\Lambda)\left|\int_\Lambda h\real\Omega=0\right\}\right..
\]
Consider the Riemannian metric $\Upsilon$ on $\mathcal{O}$ given by
\[
\Upsilon(h,k) = \int_\Lambda hk\real\Omega,\quad h,k\in T_\Lambda\mathcal{O}.
\]
It is shown in \cite{solomon} and \cite{solomon-curv} that $\Upsilon$ gives rise to a Levi-Civita connection and thus a corresponding notion of geodesics. The geodesic equation can be seen to be a fully non-linear degenerate elliptic partial differential equation~\cite{rubinstein-solomon}. In the present work, we study the geodesic equation in a family of examples with a high degree of symmetry.

Let $G$ be a Lie group that acts on $X$ preserving $\omega$ and $\Omega.$ So $G$ also acts on $\mathcal{L}^+$, and we denote by $\mathcal{L}^+_G \subset \mathcal{L}^+$ the locus of $G$-invariant positive Lagrangians. Let $\mathcal{O} \subset \mathcal{L}^+_G$ be a Hamiltonian isotopy class of $G$-invariant Lagrangians and let $\widehat{\mathcal{O}} \subset \mathcal{L}^+$ be the Hamiltonian isotopy class containing $\mathcal{O}.$ By analogy with finite dimensional manifolds, it is natural to look for geodesics of $\widehat{\mathcal{O}}$ that are contained in $\mathcal{O}.$ Indeed, the fixed points of a Lie group acting by isometries on a finite dimensional Riemannian manifold constitute a totally geodesic submanifold.

This paper shows existence and uniqueness of geodesics when $X$ is an $n$-dimensional $A_m$ Milnor fiber and $\mathcal{O}$ is a Hamiltonian isotopy class of $O_n(\mathbb{R})$-invariant Lagrangian spheres called matching cycles. The study of such Lagrangian spheres was pioneered by~\cite{khovanov-seidel,shapere-vafa} and~\cite[Section~6]{thomas-yau}. We recall the definitions. Fix a positive integer $n$ and fix a complex polynomial $f$ in one variable, of degree $m+1$, with only simple zeros. The corresponding Milnor fiber is the algebraic variety
\begin{align*}
M^n=\{(z_1,\ldots,z_n,\zeta)\in\mathbb{C}^{n+1}|z_1^2+\ldots+z_n^2=f(\zeta)\}.
\end{align*}
In particular, $M^n$ is an almost Calabi-Yau manifold of complex dimension~$n$. See Section~\ref{milnorfiber} for more detail. The group $O_n(\mathbb{R})$ acts on $M^n$ via the $z$ coordinates. A matching cycle~\footnote{The definition given here is equivalent to the definition found in the literature, which is recalled in Section~\ref{milnorfiber}. In Seidel's terminology~\cite{seidel}, these spheres would be called naive matching cycles.} in $M^n$ is an $O_n(\mathbb{R})$ invariant Lagrangian sphere on which $O_n(\mathbb{R})$ does not act freely (this is always the case if $n \geq 2$). Denote by $\pathlag{n}$ the space of matching cycles, and denote by $\pospathlag{n}\subset\pathlag{n}$ the subspace consisting of positive Lagrangians.  It is shown in Section~\ref{milnorfiber} that $\pathlag{n}$ and $\pospathlag{n}$ are infinite dimensional manifolds. Our main results are the following.
\begin{thm}
\label{initial}
Let $\Lambda_0\in\pospathlag{n},\;\tau\in T_{\Lambda_0}\pathlag{n}$. Then for some $\epsilon>0$ there exists a geodesic $\Lambda:[0,\epsilon]\to\pospathlag{n}$ such that
\begin{align*}
\Lambda(0)=\Lambda_0,\quad\left.\deriv{t}\right|_{t=0}\Lambda(t)=\tau.
\end{align*}
For a fixed $\epsilon$, the geodesic $\Lambda$ is unique.
\end{thm}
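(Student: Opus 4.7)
The plan is to exploit the $O_n(\mathbb{R})$-symmetry to reduce the infinite-dimensional geodesic equation on $\pospathlag{n}$ to an evolution equation for paths in the $\zeta$-plane, and then to solve this reduced equation via standard short-time existence theory. Every matching cycle $\Lambda \in \pathlag{n}$ projects under the coordinate $\zeta$ to a path $\gamma_\Lambda$ in $\mathbb{C}$ joining two roots of $f$, and conversely such a path (together with a choice of lift) determines $\Lambda$ uniquely. Tangent vectors to $\pathlag{n}$, which by $O_n(\mathbb{R})$-invariance reduce to data along the base path, correspond to variations of $\gamma_\Lambda$ subject to the integral constraint coming from the identification of $T_\Lambda\mathcal{O}$. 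The first step is to set up this correspondence precisely, and to translate the initial data $(\Lambda_0,\tau)$ into a pair $(\gamma_0,\dot{\gamma}_0)$ of path-level data.

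Second, I would pull the metric $\Upsilon$ back to a weighted $L^2$-type metric on the space of admissible paths, with the weight encoding the volume of the $O_n(\mathbb{R})$-orbit over each point of $\gamma$. Using the description of the Levi-Civita connection of $\Upsilon$ in \cite{solomon}, the geodesic equation becomes a second-order evolution equation for the time-dependent path $\gamma(t,s)$, in which the role of the Christoffel symbols is played by integral expressions along $\gamma$. After fixing the reparametrization freedom in $s$ by a suitable gauge (for instance, by normalizing $\gamma_s$ against $\real\Omega$), I would expect the system to take the form of a quasilinear ODE in $t$ valued in a Banach space of paths, to which a Picard iteration or an infinite-dimensional Picard--Lindel\"{o}f theorem applies. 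Uniqueness follows from the contraction estimate of the iteration, and smoothness from a standard bootstrapping argument.

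The main obstacle is the degeneracy of the reduced equation at the two endpoints of $\gamma$, where the $O_n(\mathbb{R})$-action has fixed points and the orbital volume weight vanishes. This is the concrete manifestation, in the symmetric setting, of the degenerate ellipticity of the original geodesic PDE noted in \cite{rubinstein-solomon}. I would address it either by working in function spaces with weights adapted to the singular behavior at the endpoints, or by desingularizing through a branched cover parametrization compatible with the equation $z_1^2+\cdots+z_n^2 = f(\zeta)$, which exchanges the apparent singularity for smooth behavior in a doubled coordinate. Verifying that solutions of the reduced system extend smoothly across the endpoints and genuinely correspond to a family of smooth positive matching cycles in $\pospathlag{n}$ for small $t$ is where the bulk of the technical work will lie.
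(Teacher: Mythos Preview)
Your outline is in the right spirit but misses the structural simplification that the paper actually exploits. You frame the reduced problem as a quasilinear ODE in a Banach space of paths and plan to attack it with infinite-dimensional Picard--Lindel\"of plus weighted analysis at the endpoints. The paper instead observes that, once one passes to the \emph{horizontal} gauge (the one in which $\real\Omega$ is preserved along the lift), the geodesic equation decouples pointwise: each point of the symmetric circle flows independently along a leaf of a fixed one-dimensional foliation $\tilde F$ on a two-dimensional surface, and the geodesic condition says that this flow is generated by a single smooth vector field---the pseudo-Hamiltonian vector field of $h_0\circ i\circ P$ with respect to $\pi^*\omega$. So existence and uniqueness reduce to the \emph{finite-dimensional} flow of a smooth vector field on a surface, and no Banach-space machinery is needed at all.

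The endpoint degeneracy you correctly flag is handled not by weighted function spaces but by a real blowup of $M^1$ at the points $\{z=0\}$. Your ``branched cover parametrization'' suggestion is a step in this direction (indeed the paper works on $M^1$, which is already the branched double cover of the $\zeta$-plane), but the additional real blowup is what makes both the horizontal foliation and the pseudo-Hamiltonian vector field extend smoothly across the exceptional divisors; see Propositions~\ref{foliation} and~\ref{pHamiltonian}. The upshot is that the paper's route is both more geometric and technically much lighter: once the blowup and the foliation are set up, Theorem~\ref{initial} follows from Proposition~\ref{gflow} and ordinary short-time existence for a vector field on a compact set. Your approach could in principle be pushed through, but you would be doing hard analysis to recover a result that, in the right coordinates, is a two-dimensional ODE.
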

\begin{thm}
\label{boundary}
Let $n\geq2$, let $\mathcal{O}\subset\pospathlag{n}$ be a Hamiltonian isotopy class, and let $\Lambda_0,\Lambda_1\in\mathcal{O}$. Then there is a unique geodesic connecting $\Lambda_0$ and $\Lambda_1$.
\end{thm}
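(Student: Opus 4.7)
The plan is to leverage the $O_n(\mathbb{R})$-invariance to reduce the infinite-dimensional geodesic PDE to a boundary value problem in two real variables. By the identification developed in Section~\ref{milnorfiber}, matching cycles in $M^n$ correspond to suitable paths in the $\zeta$-plane connecting two zeros of $f$, so a one-parameter family $\Lambda:[0,1]\to\pospathlag{n}$ is described by a map $\gamma:[0,1]\times[-1,1]\to\mathbb{C}$ whose slices $\gamma(t,\cdot)$ are such paths. I would first pull back the Riemannian metric $\Upsilon$ along this identification and rewrite the geodesic equation as a second-order PDE in the two real variables $(t,s)$, with $s\in[-1,1]$ the path parameter.

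Next, I would introduce coordinates on the $\zeta$-plane adapted to the almost Calabi-Yau structure --- for instance, via a local antiderivative of $\sqrt{f(\zeta)}\,d\zeta$, whose real and imaginary parts give a natural ``action--angle'' style decomposition compatible with $\Omega$. The positivity condition $\real\Omega|_\Lambda\neq 0$ corresponds to a transversality condition on $\partial_s\gamma$ in these coordinates. By analogy with the toric Mabuchi geodesic equation, where a Legendre transform converts a complex Monge--Amp\`ere equation into a trivial affine one, I expect that in the adapted coordinates the reduced geodesic equation simplifies to the point that geodesics between $\Lambda_0$ and $\Lambda_1$ are represented by affine interpolations of their transformed boundary data. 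If this holds, existence for the BVP is almost immediate: transform $\Lambda_0$ and $\Lambda_1$, interpolate affinely, and transform back.

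The main obstacle will be showing that the affine interpolation stays inside the image of $\pospathlag{n}\cap\mathcal{O}$ for every $t\in[0,1]$: a convexity statement for positive paths within their Hamiltonian isotopy class. The assumption $n\geq 2$ should enter here, as should the fact that the class $\mathcal{O}$ pins down a specific homotopy class of admissible paths relative to the branch points of $f$. Should outright convexity fail, I would fall back on a continuity method, parametrizing a family of boundary data from a trivial one to the desired pair $(\Lambda_0,\Lambda_1)$: openness of the set of parameters for which a geodesic exists follows from an implicit function theorem argument based on Theorem~\ref{initial} and nondegeneracy of the linearized geodesic operator on $O_n(\mathbb{R})$-invariant variations, while closedness would require positivity-driven a priori estimates preventing the interpolating geodesic from leaving $\pospathlag{n}$. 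Uniqueness in either approach follows from strict convexity of the $\Upsilon$-energy along any competing geodesic with the same endpoints --- equivalently, from the linearity of the reduced equation in the adapted coordinates.
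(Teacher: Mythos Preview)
Your high-level instinct --- reduce by $O_n(\mathbb{R})$-invariance to a problem on curves in the $\zeta$-plane, find coordinates in which the geodesic equation linearizes, and then argue that the straight-line interpolation stays in the admissible region --- is close in spirit to what the paper does, but the concrete implementation diverges and the crucial step is left as a hope. The paper does not change coordinates via an antiderivative of $\sqrt{f(\zeta)}\,d\zeta$; that form governs only the case $n=3$. Instead it works with the $n$-dependent horizontal foliation $\tilde{F}$ on the real blowup $\tilde{M}^1$ (the subbundle $d\zeta\in i z^{2-n}\mathbb{R}$, desingularized at the points $z=0$), and shows that a horizontal lift turns the geodesic equation into the flow of a pseudo-Hamiltonian vector field along the leaves (Proposition~\ref{gflow}). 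The real blowup is essential: without it the endpoints $z=0$ of every symmetric circle are genuine singularities of the foliation and your ``adapted coordinates'' are not defined there.

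The substantive gap is precisely your ``main obstacle'': you correctly flag that one must show the interpolated path stays positive, but neither affine interpolation nor a continuity method with unspecified a priori estimates addresses it. In the paper this is Proposition~\ref{cylindrical}: for $n\ge 2$ the horizontal foliation is \emph{cylindrical} over any positive symmetric circle, meaning each leaf is a line meeting the circle exactly once. The proof is a winding-number argument in the $\zeta$-plane (using the theorem of turning tangents and the phase constraint $\arg(z^{n-2}\zeta')\notin\frac{\pi}{2}+\pi\mathbb{Z}$) ruling out closed leaves and double intersections; this is where $n\ge 2$ enters, and it is the nontrivial idea your proposal is missing. Once cylindricality is known, existence and uniqueness follow from Proposition~\ref{solutionwhencylindrical} by writing down an explicit pseudo-Hamiltonian flow carrying $\tilde\gamma_0$ to $\tilde\gamma_1$; your uniqueness sketch via ``strict convexity of the $\Upsilon$-energy along any competing geodesic'' is not how the paper proceeds and would need further justification, since convexity of $\mathcal{C}$ along geodesics gives uniqueness of special Lagrangians, not of geodesics themselves.
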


Let $(X,\omega,J,\Omega)$ be an almost Calabi-Yau manifold and let $\Gamma \subset X$ be a Lagrangian submanifold. Denote by $\vol_\Gamma$ the volume form induced on $\Gamma$ by the K\"ahler metric associated to $\omega.$ It was shown in~\cite{harvey-lawson} that $\Omega|_\Gamma = \rho \vol_\Gamma$ for $\rho : \Gamma \to \mathbb{C}$ a non-vanishing smooth function. Following \cite{harvey-lawson}, the Lagrangian $\Gamma$ is called \emph{special} if $\rho$ has constant argument on $\Gamma.$  Let $\mathcal{O}$ be a Hamiltonian isotopy class of positive Lagrangians in $X.$ The main object of~\cite{solomon} is a functional $\mathcal{C}$ defined on the universal cover $\tilde{ \mathcal{O}}\to\mathcal{O}$, which is strictly convex along geodesics and which has a critical point over $\Gamma \in \mathcal{O}$ if and only if $\Gamma$ is special Lagrangian. Thus the following is an immediate consequence of Theorem~\ref{boundary}.
\begin{cor}
\label{uniquespecial}
Let $n\geq2$. Then in every Hamiltonian isotopy class $\mathcal{O}\subset\pospathlag{n}$, there is at most one special Lagrangian.
\end{cor}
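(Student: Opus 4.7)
The plan is to combine Theorem~\ref{boundary} with the strict convexity of the functional $\mathcal{C}$ from~\cite{solomon}. Suppose, for contradiction, that $\Gamma_0, \Gamma_1 \in \mathcal{O}$ are two distinct special Lagrangians. Since $n\geq 2$, Theorem~\ref{boundary} supplies a unique geodesic $\Lambda:[0,1]\to\mathcal{O}$ with $\Lambda(0)=\Gamma_0$ and $\Lambda(1)=\Gamma_1$, and because $\Gamma_0\neq\Gamma_1$ this geodesic is non-constant.

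Next, I would lift $\Lambda$ to a smooth path $\tilde{\Lambda}:[0,1]\to\tilde{\mathcal{O}}$ in the universal cover. The covering map is a local diffeomorphism, and under the pulled-back Riemannian structure it is a local isometry, so $\tilde{\Lambda}$ is itself a geodesic in $\tilde{\mathcal{O}}$. Strict convexity of $\mathcal{C}$ along geodesics then implies that $t\mapsto\mathcal{C}(\tilde{\Lambda}(t))$ is a strictly convex smooth function on $[0,1]$. On the other hand, since both $\Gamma_0$ and $\Gamma_1$ are special Lagrangians, the endpoint lifts $\tilde{\Lambda}(0)$ and $\tilde{\Lambda}(1)$ are critical points of $\mathcal{C}$, so the derivative $\frac{d}{dt}\mathcal{C}(\tilde{\Lambda}(t))$ vanishes at both $t=0$ and $t=1$. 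But a strictly convex smooth function on $[0,1]$ has a strictly increasing derivative, which can vanish at most once. This is the desired contradiction.

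The corollary is really a formal consequence of the existence theorem together with the variational characterization of special Lagrangians, so there is no serious obstacle to the argument; the substantive content lies in Theorem~\ref{boundary}. The only minor point worth checking is that the critical point property transfers unambiguously from $\mathcal{O}$ to every lift in $\tilde{\mathcal{O}}$, which is automatic from the deck-equivariance (up to additive constants) of $\mathcal{C}$ established in~\cite{solomon}.
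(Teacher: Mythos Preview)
Your argument is correct and is precisely the standard unpacking of what the paper calls ``an immediate consequence of Theorem~\ref{boundary}'' together with the properties of $\mathcal{C}$ from~\cite{solomon}; the paper does not spell out the details, but your proof is exactly what is intended.
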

Corollary~\ref{uniquespecial} was proved previously in the case $n=3$ in~\cite{shapere-vafa}. It is also a special case of a much more general theorem of Thomas-Yau~\cite{thomas-yau}, which relies on Floer cohomology.

A Riemannian metric is said to be \emph{strong} if it induces an isomorphism between the tangent and cotangent bundles, and \emph{weak} otherwise. As shown in \cite{klingenberg}, the distance function induced by a strong Riemannian metric makes the manifold a metric space. However, the Riemannian metric $\Upsilon$ is weak, and hence the nature of the distance function it induces is a priori unknown. In \cite{eliashberg-polterovich} and \cite{michor-mumford-vanish} there are examples of weak Riemannian structures with respect to which the distance between any two points is $0$. On the other hand, \cite{chen-kaehlermetrics} and \cite{clarke-metricgeometry} provide examples of weak Riemannian metrics which induce metric space structures. In this context, we prove the following.

\begin{thm}
	\label{metricspace}
	Let $n\geq2$, and let $\mathcal{O}\subset\pospathlag{n}$ be a Hamiltonian isotopy class. Then the Riemannian metric $\Upsilon$ induces a nondegenerate distance function $\overline{\Upsilon}$, making $\mathcal{O}$ into a metric space. Moreover, $(\mathcal{O},\overline{\Upsilon})$ is isometric to a convex subset of $C^\infty([0,1])$ with the $L^2$ metric.
\end{thm}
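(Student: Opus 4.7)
The plan is to construct an isometric embedding $\Phi : \mathcal{O} \to C^\infty([0,1])$ whose image is a convex subset of the flat $L^2$ model. Once $\Phi$ is in hand, non-degeneracy of $\overline{\Upsilon}$ and the full metric space structure will follow at once from the corresponding properties of the $L^2$ distance on the Hilbert space $L^2([0,1])$.

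The first step is the $O_n(\mathbb{R})$-symmetry reduction. Every $\Lambda \in \mathcal{O}$ is an $O_n(\mathbb{R})$-invariant sphere fibered by $S^{n-1}$'s over a matching path in the $\zeta$-plane, so $\Lambda$ is essentially determined by this one-dimensional datum. By invariance, the tangent vectors $h \in T_\Lambda \mathcal{O}$ correspond to smooth functions of a single variable along the matching path, and the pairing $\Upsilon(h,k) = \int_\Lambda h k \real \Omega$ reduces to a weighted one-dimensional integral whose weight is the $\real \Omega$-volume of the $S^{n-1}$-fiber above each point of the path.

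The second step is to normalize. I would introduce a canonical parameter $s \in [0,1]$ obtained by integrating $\real \Omega$ along $\Lambda$ and rescaling so that the total $\real \Omega$-volume is $1$. In this parameter the inner product becomes literally $\int_0^1 h(s) k(s)\, ds$, and one records the position of the matching path against $s$ to obtain a function $\Phi(\Lambda) \in C^\infty([0,1])$ encoding $\Lambda$. By construction $\Phi$ is an isometric embedding of $(\mathcal{O}, \Upsilon)$ into the flat $L^2$ model.

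To conclude, I invoke Theorem~\ref{boundary}: any two cycles $\Lambda_0, \Lambda_1 \in \mathcal{O}$ are joined by a unique $\Upsilon$-geodesic. Since in the flat $L^2$ coordinates geodesics are affine segments $(1-t)\Phi(\Lambda_0) + t\Phi(\Lambda_1)$, the image of the $\Upsilon$-geodesic under $\Phi$ must be precisely the straight line between the endpoints; this both exhibits $\Phi(\mathcal{O})$ as convex and gives $\overline{\Upsilon}(\Lambda_0,\Lambda_1) = \|\Phi(\Lambda_0) - \Phi(\Lambda_1)\|_{L^2}$, which is manifestly a genuine metric. The principal obstacle will be the second step: pinning down the correct canonical parameter and verifying that $\Upsilon$ becomes flat in it. This demands a careful computation of $\real \Omega$ on an $O_n(\mathbb{R})$-invariant sphere fibered over a path, with particular attention to the two endpoints of the matching path where the $S^{n-1}$-fiber collapses and the naive parametrization becomes singular. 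Once this coordinate description is secure, everything else is a formal consequence of the flatness of $L^2$ together with the geodesic results already proved.
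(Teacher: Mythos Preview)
Your overall strategy is the paper's: exhibit an isometry onto a convex subset of $L^2$, from which non-degeneracy follows. The gap is in your second step. Reparametrizing each $\Lambda$ by its $\real\Omega$-volume does make each tangent space $(\mathcal{H}_\Lambda,\Upsilon)$ isometric to a hyperplane in $L^2([0,1])$, but this is a pointwise statement and does not by itself produce a map $\Phi:\mathcal{O}\to C^\infty([0,1])$. ``Recording the position of the matching path against $s$'' does not specify $\Phi$: a point on the matching path lies in $\mathbb{C}$ and so has two real coordinates, and the parameter $s$ itself depends on $\Lambda$. You have not said what real-valued function $\Phi(\Lambda)$ is, nor why its differential should send $h\in\mathcal{H}_\Lambda$ to $h$ under the $L^2$ identification; the assertion ``by construction $\Phi$ is an isometric embedding'' is exactly the thing to be proved.

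The paper's $\Phi$ is the inverse exponential map $\exp_{\Lambda_0}^{-1}$ at a fixed basepoint, landing in $A_{\Lambda_0}\subset\mathcal{H}_{\Lambda_0}$. The substantive content, absent from your sketch, is Lemma~\ref{triangle}: the geodesic Hamiltonians satisfy $\hat h_{0,2}=\hat h_{0,1}+\varphi_{\Lambda_0,\Lambda_1}^*\hat h_{1,2}$, where $\varphi_{\Lambda_0,\Lambda_1}$ is the $\real\Omega$-preserving identification of $\Lambda_0$ with $\Lambda_1$ coming from horizontal lifts (Lemma~\ref{natdiff}). This additivity is what makes $\exp_{\Lambda_0}$ an affine isometry, and it comes down to the observation that the pseudo-Hamiltonian flow times $s$ of~\eqref{flowtime} along leaves of the horizontal foliation $\tilde F$ literally add. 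The ``position'' coordinate you want is this flow time along a leaf of $\tilde F$, not any intrinsic coordinate of the matching path in $\mathbb{C}$, and the consistent identification of the $\real\Omega$-parameter across different $\Lambda$'s is supplied by the cylindrical structure of $\tilde F$ (Proposition~\ref{cylindrical}), not by the $\real\Omega$-volume alone.
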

The fact that $(\mathcal{O},\overline{\Upsilon})$ is isometric to a convex subset of $L^2$ should be considered in the following context. The curvature formula of~\cite{solomon-curv} shows that a general Hamiltonian isotopy class of positive Lagrangians has non-positive curvature. However, $(\mathcal{O},\Upsilon)$ is atypical because of the $O_n(\mathbb{R})$ symmetry, and the curvature formula shows it is in fact flat. So, it follows from a Jacobi field argument that the exponential map of $(\mathcal{O},\Upsilon)$ is a Riemannian isometry. In practice, we prove Theorem~\ref{metricspace} more directly without using curvature.

As implied by Theorem~\ref{metricspace}, the metric space $(\mathcal{O},\overline{\Upsilon})$ is not complete. Thus, it would be of interest in future research to understand and characterize its completion. The completions of other weak Riemannian manifolds are explored in \cite{clarke-completion}, \cite{clarke-rubinstein-conformal} and \cite{clarke-rubinstein-ricci}.

In Sections~\ref{lagpaths}-\ref{laggeodesics} we review properties of Lagrangian paths and the definition of Lagrangian geodesics. In Section~\ref{milnorfiber} we give a more explicit treatment of $\pathlag{n}$ and characterize the elements which are positive Lagrangians. Section~\ref{lagpathsmilnor} discusses Lagrangian paths in $\pathlag{n}$. In Section~\ref{blowup} we review some basic properties of real blowups of surfaces. In Section~\ref{horfoliation} we blowup $M^1$ to obtain a new surface $\tilde{M}^1$, and construct a foliation on $\tilde{M}^1$ with isolated singular points, which is closely related to the Levi-Civita connection on $\pospathlag{n}$. In Sections~\ref{PHVF}-\ref{PHFG} we discuss pseudo-Hamiltonian flows and their relation to geodesics in $\pospathlag{n}$, and we prove Theorem~\ref{initial}. In Section~\ref{CF} we discuss cylindrical foliations and prove Theorem~\ref{boundary}. Theorem~\ref{metricspace} is proved in Section~\ref{metric}.

\section{Background}

\subsection{Lagrangian paths}
\label{lagpaths}

We start by summarizing the relevant definitions and arguments of \cite[Section~2]{akveld-salamon}, regarding paths of Lagrangian submanifolds, with some adjustment to our purposes.

Let $(X,\omega)$ be a symplectic manifold of dimension $2n$, $L$ a smooth manifold of dimension $n$, and $\mathcal{G}$ the diffeomorphism group of $L$. Denote by $\mathcal{X}$ the space of Lagrangian embeddings of $L$ in $X$. That is, $\mathcal{X}$ consists of embeddings $f:L\to X$ with $f^*\omega=0$. Note that $\mathcal{G}$ acts on $\mathcal{X}$ by
\begin{align*}
(\varphi,f)\mapsto f\circ\varphi.
\end{align*}
The space of Lagrangian submanifolds which are diffeomorphic to $L$ is then defined to be the quotient
\begin{align*}
\mathcal{L}=\mathcal{X}/\mathcal{G},
\end{align*}
where as implied by the name of this space, we think of every equivalence class in it as a Lagrangian submanifold. Given a path $\Lambda:(-\epsilon,\epsilon)\to\mathcal{L}$, a \emph{lift} of $\Lambda$ is a map
\begin{align*}
\Psi:(-\epsilon,\epsilon)\to\mathcal{X},
\end{align*}
or equivalently
\begin{align*}
\Psi:(-\epsilon,\epsilon)\times L\to X,
\end{align*}
such that for every $t$, the image of $\Psi(t)$ is $\Lambda(t)$. The path $\Lambda$ is then said to be \emph{smooth} if it has a smooth lift.

\begin{dfn}
Let $\Lambda$ be a smooth path as above, and let $\Psi$ be a smooth lift. \emph{The deformation vector field corresponding to $\Psi$ at time $t$} is the smooth section of the pullback bundle $\Psi_t^*TX$ given by
\begin{align*}
	v_t(p)=\pderiv[\Psi]{t}(t,p).
\end{align*}
\end{dfn}
The following lemma is proved in \cite[Section~2]{akveld-salamon}:
\begin{lemma}
\label{induced}
Let $\Lambda:(-\epsilon,\epsilon)\to\mathcal{L}$ be a smooth path, let $\Psi$ be some lift of $\Lambda$, and let $v_0$ denote the deformation vector field of $\Psi$ at time $t=0$. Define a $1$-form $\tilde{\sigma}$ on $L$ by
\begin{align*}
	\tilde{\sigma}_p(u)=\omega(v_0(p),d\Psi_{0,p}(u)),\qquad p\in L,u\in T_pL,
\end{align*}
and then define a $1$-form $\sigma$ on $\Lambda(0)$ by
\begin{align*}
	\sigma=\Psi_{0,*}\tilde{\sigma}.
\end{align*}
Then $\sigma$ is closed, and does not depend on the choice of the lift $\Psi$.  Furthermore, the map $\Lambda\mapsto\sigma$ yields an isomorphism of $T_{\Lambda(0)}\mathcal{L}$ and the space of closed $1$-forms on $\Lambda(0)$.
\end{lemma}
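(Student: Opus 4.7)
The plan is to prove four claims in order: (i) $\sigma$ is well defined as a 1-form on $\Lambda(0)$; (ii) $\sigma$ does not depend on the choice of lift; (iii) $\sigma$ is closed; and (iv) the resulting map $T_{\Lambda(0)}\mathcal{L} \to \{\text{closed 1-forms on } \Lambda(0)\}$ is a linear bijection. Claim (i) is immediate: since $\Psi_0$ is a Lagrangian embedding, it is a diffeomorphism from $L$ onto $\Lambda(0)$, so $\sigma := \Psi_{0,*}\tilde{\sigma}$ is unambiguously defined on $\Lambda(0)$.

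For (ii), two smooth lifts of $\Lambda$ are related by $\Psi'_t = \Psi_t \circ \varphi_t$ for a smooth family $\varphi_t \in \mathcal{G}$. Differentiating in $t$ at zero yields
\[
v'_0(p) = v_0(\varphi_0(p)) + d\Psi_{0,\varphi_0(p)}\bigl(\dot{\varphi}_0(p)\bigr),
\]
and both $d\Psi_{0,\varphi_0(p)}(\dot{\varphi}_0(p))$ and $d\Psi'_{0,p}(u) = d\Psi_{0,\varphi_0(p)}(d\varphi_{0,p}(u))$ are tangent to $\Lambda(0)$, so the Lagrangian condition $\omega|_{T\Lambda(0)\times T\Lambda(0)} = 0$ kills the extra contribution in $\tilde{\sigma}'$, giving $\tilde{\sigma}' = \varphi_0^*\tilde{\sigma}$. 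Pushing forward by $\Psi'_0 = \Psi_0 \circ \varphi_0$ then recovers the same $\sigma$. For (iii), I would regard $\Psi$ as a map $(-\epsilon,\epsilon) \times L \to X$ and decompose $\Psi^*\omega = dt \wedge \alpha_t + \beta_t$ on the product, where $\alpha_t$ is a time-dependent 1-form and $\beta_t$ a time-dependent 2-form on $L$, with $\alpha_0 = \tilde{\sigma}$. Each $\Psi_t$ being Lagrangian forces $\beta_t \equiv 0$, and then $d(\Psi^*\omega) = \Psi^*(d\omega) = 0$ reduces to $dt \wedge d_L\alpha_t = 0$, so $d_L \alpha_t = 0$ for all $t$ and in particular $d\tilde{\sigma} = 0$.

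The substance of the lemma is (iv). For injectivity, $\sigma = 0$ forces $v_0$ to be $\omega$-orthogonal to $T\Lambda(0)$, and since $\Lambda(0)$ is Lagrangian this means $v_0$ is tangent to $\Lambda(0)$; choosing $\varphi_t$ with $\dot{\varphi}_0$ equal to the corresponding vector field on $L$ produces a modified lift whose time-zero deformation vector vanishes, so the underlying tangent vector in $T_{\Lambda(0)}\mathcal{L}$ is zero. For surjectivity I would invoke Weinstein's Lagrangian tubular neighborhood theorem to identify a neighborhood of $\Lambda(0)$ in $X$ symplectically with a neighborhood of the zero section of $T^*\Lambda(0)$ with its canonical symplectic form; for any closed 1-form $\sigma$ on $\Lambda(0)$ and small $t$, the graphs of $t\sigma$ are Lagrangian and form a smooth path whose associated 1-form at $t=0$ is exactly $\sigma$. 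The Weinstein theorem is the main nontrivial ingredient; the other steps are local computations in which the Lagrangian hypothesis enters through the antisymmetry of $\omega$ on vectors tangent to $\Lambda(0)$.
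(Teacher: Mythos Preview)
Your argument is correct and follows the standard line of reasoning; each of the four claims is handled properly, and the use of the Weinstein tubular neighborhood theorem for surjectivity is exactly the natural ingredient. Note, however, that the paper does not supply its own proof of this lemma: it simply cites \cite[Section~2]{akveld-salamon} and moves on. What you have written is essentially the argument one finds there, so there is nothing to contrast on the level of strategy.

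One small remark on your injectivity step: you say that after modifying the lift so that the time-zero deformation vector vanishes, ``the underlying tangent vector in $T_{\Lambda(0)}\mathcal{L}$ is zero.'' This is the right conclusion, but it tacitly uses that $T_{\Lambda(0)}\mathcal{L}$ is identified with $T_{\Psi_0}\mathcal{X}$ modulo vectors tangent to the $\mathcal{G}$-orbit; it is worth making that identification explicit, since in this infinite-dimensional setting the quotient description of the tangent space is part of what the lemma is establishing. Otherwise the write-up is clean.
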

The \emph{derivative} of the path $\Lambda$ at $0$ is defined to be the closed $1$-form $\sigma$ of Lemma~\ref{induced}, and the derivative at any other time is defined similarly. $\Lambda$ is said to be \emph{exact} if its derivative at $t$ is an exact $1$-form on $\Lambda(t)$ for all $t$. As shown in \cite[Section~2]{akveld-salamon}, $\Lambda$ is exact if and only if there exists a Hamiltonian flow
\[
\phi_t:X\to X,\qquad t\in(-\epsilon,\epsilon),
\]
which satisfies \[\phi_t(\Lambda(0))=\Lambda(t)\] for every $t$.

\subsection{Geodesics of Lagrangian submanifolds}
\label{laggeodesics}

The following section is based on \cite[Section~5]{solomon} and \cite{solomon-curv}, and briefly recalls the Riemannian structure of a Hamiltonian isotopy class in the space of positive Lagrangians in an almost Calabi-Yau manifold.

\begin{dfn}
An \emph{almost Calabi-Yau} manifold is a quadruple $(X,\omega,J,\Omega)$, where $(X,\omega,J)$ is a K\"{a}hler manifold and $\Omega$ is a non-vanishing holomorphic form on $X$ of maximal rank.
\end{dfn}
\begin{dfn}
\label{poslag}
Let $(X,\omega,J,\Omega)$ be almost Calabi-Yau, and let $\Lambda\subset X$ be a Lagrangian submanifold. We say $\Lambda$ is \emph{positive} if $\real\Omega|_\Lambda$ is a volume form on $\Lambda$. Whenever integrating another volume form on a positive Lagrangian, we use the orientation determined by $\real\Omega$.
\end{dfn}

Let $(X,\omega,J,\Omega)$ be almost Calabi-Yau. Let $\mathcal{L}^+\subset\mathcal{L}$ denote the space of positive Lagrangian submanifolds of $X$ which are diffeomorphic to a compact connected manifold $L$. Let $\mathcal{O}\subset\mathcal{L}^+$ be a Hamiltonian isotopy class. Then every path in $\mathcal{O}$ is exact, and for every $\Lambda\in\mathcal{O}$ we have
\begin{align*}
T_\Lambda\mathcal{O}=\{d h|h\in C^\infty(\Lambda)\}.
\end{align*}
Since $\Lambda$ is a compact connected positive Lagrangian, the right hand space can be identified with
\begin{align*}
\mathcal{H}_\Lambda:=\left\{h\in C^\infty(\Lambda)\left|\int_\Lambda h\real\Omega=0\right\}\right..
\end{align*}
This enables us to define a Riemannian metric $\Upsilon$ on $\mathcal{O}$ by
\begin{align*}
\Upsilon(h,k)=\int_\Lambda hk\real\Omega,\quad h,k\in\mathcal{H}_\Lambda.
\end{align*}

It is shown in \cite{solomon-curv} that $\Upsilon$ has a Levi-Civita connection. The details are the following. Let $\Lambda:[0,1]\to\mathcal{O}$ be a smooth path, and let $(h_t),\;t\in[0,1]$, be a vector field along $\Lambda$. That is, $h_t\in\mathcal{H}_{\Lambda(t)}$ for every $t$. Pick a smooth lift $\Psi$, and let $v_t$ denote the deformation vector field of $\Psi$ at time $t$. Set
\[
\tilde{h}_t=h_t\circ\Psi_t.
\]
Write
\[
\tilde{\Omega}_t=\Psi_t^*\Omega,
\]
and let $w_t$ be the unique vector field on $L$ satisfying
\[
i_{w_t}\real\tilde{\Omega}_t=-i_{v_t}\real\Omega.
\]
Namely, for $p\in L$ and $u_1,\ldots,u_{n-1}\in T_pL$ we have
\[
\real\tilde{\Omega}_t(w_t(p),u_1,\ldots,u_{n-1})=-\real\Omega(v_t(p),d\Psi_t(u_1),\ldots,d\Psi_t(u_{n-1})).
\]
The covariant derivative is then given by
\begin{align}
\label{covariant}
\frac{D}{dt}h_t=\left(\deriv{t}\tilde{h}_t+d\tilde{h}_t(w_t)\right)\circ\Psi_t^{-1}.
\end{align}
It can be verified that the covariant derivative is independent of the lift $\Psi$ and hence well defined. 

The term \emph{geodesic of positive Lagrangians} is defined in the usual manner, with respect to the above connection. That is, a geodesic is a path $\Lambda:I\to\mathcal{O}$ satisfying
\[
\frac{D}{dt}\deriv{t}\Lambda(t)=0.
\]

We now recall the notion of horizontal lifts. As we shall see later, it can be useful for constructing geodesics.

\begin{dfn}
	Let $\Lambda:[0,1]\to\mathcal{O}$ be a path, let $\Psi$ be a smooth lift, and let $v_t$ denote the deformation vector field corresponding to $\Psi$ at time $t$. We say $v_t$ is \emph{\horizontal} if it satisfies
	\[
	i_{v_t}\real\Omega=0.
	\]
	The lift $\Psi$ is said to be \emph{horizontal} if $v_t$ is \horizontal\ for $t\in[0,1]$. As explained in~\cite{solomon},  a given embedding $\Psi_0:L\to X$ whose image is $\Lambda(0)$ extends uniquely to a horizontal lift of $\Lambda$. 
\end{dfn}
The geodesic equation becomes easier to handle using horizontal lifts. Indeed, let $\Lambda:[0,1]\to\mathcal{O}$, and let $\Psi$ be a horizontal lift. For $t\in[0,1]$, let $h_t\in\mathcal{H}_{\Lambda(t)}$ such that
\[
\deriv{t}\Lambda(t)=dh_t.
\]
Since $\Psi$ is horizontal, the vector field $w_t$ in \eqref{covariant} vanishes, and $\Lambda$ is a geodesic if and only if
\[
\deriv{t}h_t\circ\Psi_t(p)\equiv0,\qquad p\in L,
\]
or equivalently,
\[
h_t\circ\Psi_t=h_0\circ\Psi_0,\qquad t\in[0,1].
\]

\section{Lagrangian spheres in Milnor fibers}

\subsection{The Milnor fiber}
\label{milnorfiber}
Let $f$ be a complex polynomial in one variable whose zeros are all simple. For any positive integer $n$, the corresponding \emph{Milnor fiber} of dimension $n$ is given by
\begin{equation*}
M^n = \{(z_1,\ldots ,z_n,\zeta ) \in \mathbb{C}^{n+1}\  | \ z_1^2+\ldots +z_n^2=f(\zeta)\}.
\end{equation*}
As a complex submanifold of a K\"ahler manifold, $(M^n,\omega,J)$ is also K\"ahler, where $\omega$ and $J$ are the symplectic form and complex structure inherited from $\mathbb{C}^{n+1}$. Define
\begin{align}
\label{Omega}
\Omega=\frac{(-1)^{n+i}}{2z_i}\left(\bigwedge_{j\ne i}d z_j\right)\wedge d\zeta=\frac{1}{f'(\zeta)}\bigwedge_{j=1}^nd z_j.
\end{align}
Since all the zeros of $f$ are simple, $\Omega$ is a well defined nonvanishing holomorphic $(n,0)$ form on $M^n$, and $(M^n,\Omega)$ is almost Calabi-Yau.

We now recall matching cycles over curves. Given a complex number $\zeta$, set
\begin{displaymath}
\sigma_{\zeta}^{n-1}=\{(z_1,\ldots,z_n,\zeta)\in M^n\ |\ z_i\in \sqrt{f(\zeta)}\mathbb{R}\ \text{for }i=1,\ldots ,n\}.
\end{displaymath}
Note that if $\zeta$ is a zero of $f$, then $\sigma_{\zeta}^{n-1}$ consists of the single point $(0,\ldots,0,\zeta)$, and for any other $\zeta$, the locus $\sigma_{\zeta}^{n-1}$ is diffeomorphic to the $n-1$-sphere. Let $c\subset\mathbb{C}$ be a connected regular curve with boundary $\partial c=\{p,p'\}$, such that $f(p)=f(p')=0$, and $f(q)\neq0$ for every interior point $q\in c$. The matching cycle over $c$ is given by
\begin{displaymath}
\Lambda_c^n=\bigcup_{\zeta\in c}\sigma_{\zeta}^{n-1}.
\end{displaymath}
The following lemma is proved in \cite[Section~6]{khovanov-seidel}:
\begin{lemma}
\label{lagrangian}
For any regular curve $c\subset\mathbb{C}$ with the above requirements, $\Lambda_c^n$ is a smooth Lagrangian submanifold of $M^n$, diffeomorphic to the n sphere.
\end{lemma}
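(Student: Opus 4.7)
The plan is to construct an explicit smooth embedding $F\colon S^n\to M^n$ whose image is $\Lambda_c^n$ and then verify $F^{\ast}\omega=0$. First, parametrize $c$ by a smooth regular map $\gamma\colon[-1,1]\to\mathbb{C}$ with $\gamma(-1)=p$, $\gamma(1)=p'$ and $\gamma'$ nowhere vanishing. Because $f$ has simple zeros at $p,p'$ and $\gamma'(\pm 1)\neq 0$, the composition $f\circ\gamma$ has simple zeros precisely at $\pm1$, so two applications of Hadamard's lemma give
\[
f(\gamma(t))=(1-t^{2})g(t)
\]
with $g\colon[-1,1]\to\mathbb{C}$ smooth and nowhere vanishing. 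Since $[-1,1]$ is simply connected, a smooth branch $\lambda(t):=\sqrt{g(t)}$ exists.

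Next, define $F\colon S^{n}\to\mathbb{C}^{n+1}$, where $S^{n}=\{\sum x_{i}^{2}=1\}$, by
\[
F(x_{0},x_{1},\dots,x_{n})=\bigl(\lambda(x_{0})x_{1},\dots,\lambda(x_{0})x_{n},\gamma(x_{0})\bigr).
\]
A direct check gives $\sum_{j=1}^{n}(\lambda(x_{0})x_{j})^{2}=g(x_{0})(1-x_{0}^{2})=f(\gamma(x_{0}))$, so $F(S^{n})\subset M^{n}$. Setting $r_{j}=x_{j}/\sqrt{1-x_{0}^{2}}$ for $|x_{0}|<1$ places $r$ in $S^{n-1}\subset\mathbb{R}^{n}$ and $F(x)$ in $\sigma_{\gamma(x_{0})}^{n-1}$, while the two poles map to $(0,\dots,0,p)$ and $(0,\dots,0,p')$; hence $F(S^{n})=\Lambda_{c}^{n}$.

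Then I would verify that $F$ is a smooth embedding. Away from the poles, smoothness and injectivity are immediate from the formula and the injectivity of $\gamma$. Near, say, the north pole, use local coordinates $(x_{1},\dots,x_{n})$ via $x_{0}=\sqrt{1-\sum_{j=1}^{n}x_{j}^{2}}$, which is smooth at the origin; so $\lambda(x_{0})$ and $\gamma(x_{0})$ remain smooth there, and differentiating at $x=(1,0,\dots,0)$ yields $\partial F_{i}/\partial x_{j}=\lambda(1)\delta_{ij}$ for $i\leq n$ and $\partial F_{n+1}/\partial x_{j}=0$, so $dF$ has rank $n$. A parallel argument handles the south pole. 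Compactness of $S^{n}$ upgrades the injective immersion to an embedding, giving a smooth submanifold $\Lambda_{c}^{n}$ diffeomorphic to $S^{n}$.

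Finally, to establish the Lagrangian property, pull back $\omega=\tfrac{i}{2}\bigl(\sum_{j=1}^{n}dz_{j}\wedge d\bar{z}_{j}+d\zeta\wedge d\bar{\zeta}\bigr)$. Since each $x_{j}$ is real, $d(\lambda x_{j})\wedge d(\bar{\lambda}x_{j})=2i\,\imaginary(\lambda'\bar{\lambda})\,x_{j}\,dx_{0}\wedge dx_{j}$; summing over $j=1,\dots,n$ factors out $\sum_{j=1}^{n}x_{j}\,dx_{j}$, which on $S^{n}$ equals $-x_{0}\,dx_{0}$ and wedges with $dx_{0}$ to zero. The $\zeta$ term is $|\gamma'(x_{0})|^{2}\,dx_{0}\wedge dx_{0}=0$, so $F^{\ast}\omega=0$. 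I expect the main technical obstacle to be the smoothness of $F$ at the poles: writing $\sqrt{f(\gamma(t))}$ naively produces a function that is only H\"{o}lder, not smooth, at the endpoints; it is the explicit factorization $f\circ\gamma=(1-t^{2})g$ that absorbs the singular $\sqrt{1-t^{2}}$ into the sphere parametrization, leaving the smooth $\lambda=\sqrt{g}$ to provide the $z$-components of $F$.
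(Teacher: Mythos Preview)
Your proof is correct. The paper does not give its own proof of this lemma; it simply cites \cite[Section~6]{khovanov-seidel}. So there is no argument in the paper to compare against directly.

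That said, your construction is essentially the same parametrization the paper writes down later in Proposition~\ref{spherediffeo}: there the authors set $\varphi(u,x)=(z(u)\cdot x,\zeta(u))$ and $\chi(u,x)=(\sin(u)\cdot x,\cos(u))$ and define $\Psi:S^n\to\Lambda_\gamma^n$ by $\Psi\circ\chi=\varphi$. Under $x_0=\cos u$, this is your map $F$ with $\lambda(x_0)=z(u)/\sin u$; the paper's even nonvanishing function $r(u)$ such that $z(u)=r(u)\sin u$ plays exactly the role of your $\lambda$. The difference is only in logical order: the paper invokes Lemma~\ref{lagrangian} to already know $\Lambda_\gamma^n$ is a manifold and then checks $\Psi$ is a diffeomorphism onto it, whereas you use $F$ itself to establish smoothness of $\Lambda_c^n$ from scratch. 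Your Hadamard factorization $f\circ\gamma=(1-t^2)g$ is precisely what guarantees the existence and smoothness of this $r$ (equivalently $\lambda$), which is the one genuinely delicate point, and you have identified and handled it correctly.
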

Note that for any curve $c$ as above, the matching cycle over $c$ is $O_n(\mathbb{R})$ invariant. On the other hand, it can be seen that any $O_n(\mathbb{R})$-invariant Lagrangian sphere in $M^n$ on which $O_n(\mathbb{R})$ does not act freely is a matching cycle over a curve. Thus $\pathlag{n}$ is an infinite dimensional manifold.

\begin{rem}
	\label{naive}
	The elements of $\pathlag{n}$ are actually called \emph{naive matching cycles}, according to Seidel's terminology in~\cite{seidel}. 
\end{rem}

\begin{rem}
	\label{s1}
	Throughout this paper, we think of $S^1$ as the quotient $\mathbb{R}/2\pi\mathbb{Z}$. Considering $S^1$ as a group, we let $-:S^1\to S^1$ denote the inversion. For example, the equalities $-(0)=0,-\pi=\pi$, hold in $S^1$ with our notation.
\end{rem}
\begin{dfn}
	A \emph{symmetric circle} is an embedding $\gamma:S^1\to M^1,\;u\mapsto(z(u),\zeta(u))$, which satisfies
	\begin{enumerate}[label=(\alph*)]
		\item\label{symmetrica} $\gamma$ is $O_1(\mathbb{R})$-equivariant. That is,
		\[
		\forall u\quad(z(-u),\zeta(-u))=(-z(u),\zeta(u)).
		\]
		\item\label{symmetricb} $\forall u\neq0,\pi\quad z(u)\neq0$.
	\end{enumerate}
	Note that a symmetric circle satisfies $z(0)=z(\pi)=0$, as implied by condition \ref{symmetrica}. Images of symmetric circles are just elements of $\pathlag{1}$.
\end{dfn}
The following fact will be used below.
\begin{lemma}
	\label{composesqrt}
	Let $g:(-\epsilon,\epsilon)\to\mathbb{R}$ be smooth and satisfy $g(-x)=g(x)$ for all $x$. Then the composition $h=g\circ\sqrt{\;}:[0,\epsilon^2)\to\mathbb{R}$ is infinitely differentiable from the right at $0$ and satisfies $h'(0)=g''(0)/2$.
\end{lemma}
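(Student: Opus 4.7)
The plan is to reduce the lemma to the classical fact (sometimes called Whitney's even function theorem) that every smooth even function $g$ on $(-\epsilon,\epsilon)$ can be written as $g(x)=G(x^2)$ for a smooth function $G$ on $(-\epsilon^2,\epsilon^2)$. Granting this, the composition becomes $h(y) = g(\sqrt{y}) = G(y)$ on $[0,\epsilon^2)$, so $h$ is simply the restriction of a smooth function and is therefore infinitely differentiable from the right at $0$.

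To establish the decomposition $g(x)=G(x^2)$, I would use the evenness of $g$ to conclude that all odd-order derivatives of $g$ at $0$ vanish. Applying Taylor's theorem with integral remainder twice (first to $g$ at $0$, then to $g'$ at $0$, using $g'(0)=0$) yields
\[
g(x) = g(0) + x^2\int_0^1\!\int_0^1 t\,g''(stx)\,ds\,dt,
\]
and the double integral defines a smooth function $g_1(x)$ which is even because $g$ and $g(0)+x^2g_1(x)$ agree. Iterating this construction gives, for each $N$, a representation $g(x)=\sum_{k=0}^{N}a_kx^{2k} + x^{2N+2}g_N(x)$ with $g_N$ smooth and even. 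One can then define $G$ directly on $(0,\epsilon^2)$ by $G(y) = g(\sqrt{y})$ and verify, using the iterated formulas above, that all right-sided derivatives of $G$ at $0$ exist and agree with the formal coefficients $a_k = g^{(2k)}(0)/(2k)!$, so $G$ extends smoothly across $0$.

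The derivative identity is immediate once $g(x)=G(x^2)$ is in hand: differentiating twice gives
\[
g'(x) = 2xG'(x^2), \qquad g''(x) = 2G'(x^2) + 4x^2G''(x^2),
\]
so evaluating at $x=0$ gives $g''(0) = 2G'(0) = 2h'(0)$, as required.

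The only real obstacle is the production of a genuinely smooth $G$ rather than merely formal Taylor coefficients. The slickest route is to quote Whitney's theorem outright; a self-contained argument proceeds by induction on the order of differentiability, repeatedly applying the auxiliary observation that if $f$ is smooth and odd then $f(x)/x$ extends smoothly to an even function. Everything else in the proof is a routine verification that Taylor expansion commutes with the substitution $x=\sqrt{y}$.
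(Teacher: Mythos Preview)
The paper states Lemma~\ref{composesqrt} as a known fact and gives no proof, so there is nothing to compare against directly. Your proposal is correct: invoking Whitney's even function theorem to write $g(x)=G(x^2)$ with $G$ smooth is the standard way to establish this lemma, and once that representation is in hand, both the smoothness of $h=G|_{[0,\epsilon^2)}$ and the identity $h'(0)=g''(0)/2$ follow immediately from the chain-rule computation you wrote down. Your sketch of the self-contained argument via iterated Taylor remainder is also the right idea; the only delicate point, which you correctly flag, is upgrading the existence of all right derivatives at $0$ to genuine smoothness of $G$ across $0$, but for the statement as phrased (infinite right-differentiability at $0$) the iterated remainder formula already suffices.
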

\begin{lemma}
	\label{regular} \mbox{}
\begin{enumerate}[label=(\alph*)]
\item \label{regulara}Let $\alpha:(-\epsilon,\epsilon)\to M^1,\;x\mapsto(z(x),\zeta(x))$, be a smooth embedding satisfying
\begin{gather*}
z(0)=0,\quad z(x)\neq0\;\forall x\neq0,\\(z(-x),\zeta(-x))=(-z(x),\zeta(x))\;\forall x,
\end{gather*}
and let $c=\{\zeta(x)|x\in(-\epsilon,\epsilon)\}\subset\mathbb{C}$. Then the map $\eta:[0,\epsilon^2)\to c,\;x\mapsto\zeta(\sqrt{x}),$ is a diffeomorphism, and so $c$ is a regular curve with boundary $\partial c=\{\zeta(0)\}$.
\item \label{regularb}Let $\gamma:S^1\to M^1,\;u\mapsto(z(u),\zeta(u))$ be a symmetric circle. Then the set $c=\{\zeta(u)|u\in S^1\}\subset\mathbb{C}$ is a regular curve with boundary $\partial c=\{\zeta(0),\zeta(\pi)\}$.
\end{enumerate}
\end{lemma}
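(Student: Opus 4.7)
The plan is to exploit the evenness of $\zeta$ in $x$ (forced by the $O_1(\mathbb{R})$-equivariance) together with the defining constraint $z(x)^2=f(\zeta(x))$ of $M^1$ and the embedding hypothesis on $\alpha$.

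For part (a), the relation $\zeta(-x)=\zeta(x)$ lets me apply Lemma~\ref{composesqrt} to the real and imaginary parts of $\zeta$ and conclude that $\eta(y)=\zeta(\sqrt{y})$ is smooth on $[0,\epsilon^2)$. Surjectivity of $\eta$ onto $c$ is immediate from evenness. For injectivity, if $\zeta(x_1)=\zeta(x_2)$ with $0\leq x_1<x_2<\epsilon$, then $z(x_1)^2=z(x_2)^2$ so $z(x_1)=\pm z(x_2)$; the $+$ case contradicts injectivity of $\alpha$ on $[0,\epsilon)$, while the $-$ case yields $\alpha(x_1)=\alpha(-x_2)$ with $x_1\neq -x_2$, again contradicting injectivity (the edge case $x_1=0$ uses $z(0)=0$ and $z(x_2)\neq 0$).

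To upgrade $\eta$ to a diffeomorphism I would check $\eta'\neq 0$ everywhere. For $y>0$, $\eta'(y)=\zeta'(\sqrt{y})/(2\sqrt{y})$, and differentiating $z^2=f(\zeta)$ once gives $2zz'=f'(\zeta)\zeta'$; if $\zeta'(x_0)=0$ for some $x_0\in(0,\epsilon)$, then since $z(x_0)\neq 0$ we would get $z'(x_0)=0$, violating $\alpha'(x_0)\neq 0$. At $y=0$, Lemma~\ref{composesqrt} gives $\eta'(0)=\zeta''(0)/2$, so it suffices to show $\zeta''(0)\neq 0$. Differentiating $z^2=f(\zeta)$ twice and evaluating at $x=0$, using $z(0)=0$ and $\zeta'(0)=0$ (the latter from evenness), gives
\[
2z'(0)^2=f'(\zeta(0))\,\zeta''(0).
\]
Since $f(\zeta(0))=z(0)^2=0$ and the zeros of $f$ are simple, $f'(\zeta(0))\neq 0$; since $\alpha'(0)=(z'(0),0)\neq 0$, also $z'(0)\neq 0$. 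Hence $\zeta''(0)\neq 0$, so $\eta$ is a local diffeomorphism at $0$, and combined with bijectivity and nonvanishing of $\eta'$ on the interior, $\eta$ is a diffeomorphism and $c$ is a regular curve with boundary $\{\zeta(0)\}$.

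For part (b), I would reduce to part (a) at each of the two boundary points. Near $u=0$, the restriction of $\gamma$ to a small interval about $0$ is exactly the setup of part (a), so $c$ inherits a regular-curve structure near $\zeta(0)$. Near $u=\pi$ I would set $v=u-\pi$ and observe that in $S^1=\mathbb{R}/2\pi\mathbb{Z}$ one has $\gamma(\pi-v)=\gamma(-(\pi+v))=(-z(\pi+v),\zeta(\pi+v))$, so the reparametrized embedding $v\mapsto\gamma(\pi+v)$ satisfies the hypotheses of part (a) about $v=0$, giving a regular chart near $\zeta(\pi)$. On the open arcs $u\in(0,\pi)$ and $u\in(-\pi,0)$, condition \ref{symmetricb} ensures $z(u)\neq 0$, and the same argument as in the interior case of part (a)---differentiating $z^2=f(\zeta)$ and using $\gamma'(u)\neq 0$---shows $\zeta'(u)\neq 0$; injectivity of $\zeta$ on each arc follows from injectivity of $\gamma$ exactly as in part (a). Glueing these four pieces produces the regular-curve structure on $c$ with boundary $\{\zeta(0),\zeta(\pi)\}$.

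The main obstacle is the regularity of $\eta$ at the boundary point, which is precisely where the almost-Calabi-Yau setup enters the picture: the simple-zero hypothesis on $f$ is what prevents $\zeta''(0)$ from vanishing in the identity $2z'(0)^2=f'(\zeta(0))\zeta''(0)$, and without it $\eta'(0)$ could fail to be nonzero.
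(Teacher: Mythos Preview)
Your proof is correct and follows essentially the same route as the paper: smoothness of $\eta$ at the boundary via Lemma~\ref{composesqrt}, and nonvanishing of $\eta'(0)$ via the identity $2z'(0)^2=f'(\zeta(0))\zeta''(0)$ obtained by twice differentiating $z^2=f(\zeta)$. The only real difference is that you spell out injectivity of $\eta$ and the interior nonvanishing $\zeta'(x)\neq 0$ explicitly, whereas the paper covers the interior in one line by observing that $(z,\zeta)\mapsto\zeta$ is a local diffeomorphism of $M^1$ where $z\neq 0$, and leaves global injectivity implicit.
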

\begin{proof}\mbox{}
We start with \ref{regulara}. At any $(z,\zeta)\in M^1$ with $z\ne0$ the projection $(z,\zeta)\mapsto\zeta$ is a local diffeomorphism, and hence we only need to show regularity of $\eta$ at $0$. By assumption $\zeta$ is even, and thus $\eta$ is infinitely differentiable from the right at $0$, by Lemma~\ref{composesqrt}. We need to verify that $\eta'(0)\ne0$, or equivalently $\zeta''(0)\ne0$. Differentiating $(z(x))^2=f(\zeta(x))$ yields
\begin{align*}
2z(x)z'(x)=f'(\zeta(x))\zeta'(x),
\end{align*}
and differentiating again,
\begin{align*}
2(z'(x))^2+2z(x)z''(x)=f''(\zeta(x))(\zeta'(x))^2+f'(\zeta(x))\zeta''(x).
\end{align*}
Since $z(0)=\zeta'(0)=0$, substituting $x=0$ yields
\begin{align*}
2(z'(0))^2=f'(\zeta(0))\zeta''(0).
\end{align*}
Since $\alpha$ is an embedding and $\zeta'(0)=0$, we conclude $z'(0)\ne0$, and by the last equation $\eta$ is indeed regular.

For \ref{regularb}, once again, we only need to show regularity of $c$ near $\zeta(0),\zeta(\pi)$. Identifying a neighborhood of $0\in S^1$ with a real interval $(-\epsilon,\epsilon)$, regularity at $\zeta(0)$ follows from~\ref{regulara}. Regularity at $\zeta(\pi)$ is shown similarly.\qedhere
\end{proof}

Lemmas~\ref{lagrangian} and~\ref{regular} \ref{regularb} allow us to forget about curves in the complex plane, and consider the corresponding elements in $\pathlag{1}$ instead. The letter $\gamma$ will be used henceforth to denote a symmetric circle, and the two coordinates of $\gamma$ will be denoted by $z,\zeta$. By abuse of notation, we shall refer to the corresponding matching cycle in $M^n$ as $\Lambda_\gamma^n$, instead of $\Lambda_c^n$.
\begin{prop}
	\label{spherediffeo}
	Let $\gamma$ be a symmetric circle. Set
	\begin{align*}
	\varphi:S^1\times S^{n-1}\to\Lambda_\gamma^n,\quad(u,x)\mapsto(z(u)\cdot x,\zeta(u))
	\end{align*}
	and
	\begin{align*}
	\chi:S^1\times S^{n-1}\to S^n,\quad(u,x)\mapsto(\sin(u)\cdot x,\cos(u)).
	\end{align*}
	Then there is a unique map $\Psi:S^n\to\Lambda_\gamma^n$ which satisfies $\Psi\circ\chi=\varphi$, and it is a diffeomorphism.
\end{prop}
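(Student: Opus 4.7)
The plan is to first construct $\Psi$ as a set-theoretic bijection by checking that $\varphi$ is constant on the fibers of $\chi$, and then to verify $\Psi$ is smooth with everywhere-injective differential. The main obstacle is the behavior at the two points of $S^n$ where $\chi$ collapses an entire $S^{n-1}$ fiber to a point.

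The map $\chi$ is surjective, and its fibers are easy to describe: over the north pole $(0,\ldots,0,1)\in S^n$ it is $\{0\}\times S^{n-1}$, over the south pole $(0,\ldots,0,-1)$ it is $\{\pi\}\times S^{n-1}$, and over every other point it is a two-element set $\{(u,x),(-u,-x)\}$ with $\sin u\ne0$. Using $z(0)=z(\pi)=0$ and $(z(-u),\zeta(-u))=(-z(u),\zeta(u))$, we see that $\varphi(0,x)=(0,\zeta(0))$, $\varphi(\pi,x)=(0,\zeta(\pi))$ and $\varphi(-u,-x)=\varphi(u,x)$, so $\varphi$ is constant on each such fiber. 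This produces the unique map $\Psi:S^n\to\Lambda_\gamma^n$ with $\Psi\circ\chi=\varphi$. Surjectivity of $\Psi$ is inherited from that of $\varphi$, and injectivity follows from a short case analysis: if $u\in\{0,\pi\}$, then $\zeta(u)=\zeta(u')$ combined with $\gamma(0)\ne\gamma(\pi)$ pins down $u'=u$; if $z(u)\ne0$, the equation $z(u)x=z(u')x'$ with $x,x'\in S^{n-1}$ forces $x'=\pm x$ and $z(u')=\pm z(u)$, so $\gamma(u')=\gamma(\pm u)$, and $\gamma$ being an embedding gives $(u',x')=(\pm u,\pm x)$.

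For smoothness, the work away from the poles is immediate: there $\chi$ is a local diffeomorphism and $d\varphi$ is injective (from $\sin u\ne0$ and $\gamma$ being an embedding), so locally $\Psi=\varphi\circ\chi^{-1}$ and $d\Psi$ is a linear isomorphism. The essential content is at the poles, handled by an even-function trick. Working near the north pole in the chart $y\mapsto(y,\sqrt{1-|y|^2})$ for $|y|<1$, one has $\sin u=|y|$ and $x=y/|y|$. Note that $z'(0)\ne0$, because $\gamma'(0)=(z'(0),\zeta'(0))\ne0$ and $\zeta$ is even in $u$. The ratio $z(u)/\sin u$, being the quotient of two odd smooth functions with nonzero derivative at $0$, extends to a smooth even function of $u$ with value $z'(0)$ at $u=0$, so Lemma~\ref{composesqrt} (applied to real and imaginary parts of $z$ separately) yields smooth $G,\tilde\zeta$ with $z(u)/\sin u=G(u^2)$, $\zeta(u)=\tilde\zeta(u^2)$, and $G(0)=z'(0)$; similarly $(\arcsin|y|)^2=H(|y|^2)$ for a smooth $H$ with $H'(0)=1$. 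Substituting,
\[
\Psi(y)=\bigl(G(H(|y|^2))\,y,\ \tilde\zeta(H(|y|^2))\bigr),
\]
which is manifestly smooth, with $d\Psi_0(y)=(z'(0)\,y,0)$ injective since $z'(0)\ne0$. The south pole is handled identically after replacing $u$ by $u-\pi$. Combining these observations, $\Psi$ is a smooth bijection from the compact manifold $S^n$ onto $\Lambda_\gamma^n$ whose differential is everywhere injective, hence a diffeomorphism.
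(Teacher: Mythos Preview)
Your proof is correct and follows essentially the same route as the paper: bijectivity via the fiber description of $\chi$, local diffeomorphism away from the poles, and the even-function trick with Lemma~\ref{composesqrt} at the poles. The only difference is cosmetic organization: the paper writes $z(u)=r(u)\sin u$ and shows directly that $y\mapsto r(\arcsin|y|)$ and $y\mapsto\zeta(\arcsin|y|)$ are smooth via Lemma~\ref{composesqrt}, while you factor through $u^2$ and introduce the auxiliary $H(|y|^2)=(\arcsin|y|)^2$; both amount to the same application of the lemma.
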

\begin{proof}
	Existence and uniqueness of $\Psi$ follow from the universal property of quotient, and so does bijectivity. Both $\varphi$ and $\chi$ are local diffeomorphisms at any $(u,x)$ with $u\neq0,\pi$. Hence $\Psi$ is smooth and regular anywhere away from the two poles. Let $D^n$ stand for the open $n$-dimensional unit ball, and parametrize the north $n$-hemisphere by
	\begin{align*}
	\mathbf{Y}:D^n\to S^n,\quad y\mapsto(y,\sqrt{1-|y|^2}).
	\end{align*}
	We show that $\Psi\circ\mathbf{Y}$ is smooth and regular at $y=0$. By assumption on $\gamma$, there is a smooth even nonvanishing function $r:S^1\to\mathbb{C}$ such that
	\begin{align*}
	\forall u\in S^1\quad z(u)=r(u)\sin(u).
	\end{align*}
	Let $U\subset S^1$ be a small neighborhood of $0$. If $y\in D^n$ and $(u,x)\in U\times S^{n-1}$ satisfy
	\begin{align*}
	\mathbf{Y}(y)=\chi(u,x)
	\end{align*}
	then
	\begin{align*}
	u=\pm\arcsin|y|,
	\end{align*}
	and it follows that
	\begin{align*}
	\Psi\circ\mathbf{Y}(y)=(r(\arcsin|y|)\cdot y,\zeta(\arcsin|y|)).
	\end{align*}
	The function $y\mapsto r(\arcsin|y|)$ can be written as $y\mapsto r\circ\arcsin\circ\sqrt{\;}(|y|^2)$, and it is smooth by Lemma~\ref{composesqrt}, as $r\circ\arcsin$ is smooth and even. The function $y\mapsto\zeta(\arcsin|y|)$ is smooth by a similar argument. Hence $\Psi\circ\mathbf{Y}$ is smooth. It is regular at $y=0$ since $r$ is nonvanishing. Smoothness and regularity of $\Psi$ at the other pole can be shown similarly.
\end{proof}

\begin{rem}
	\label{realhol}
	All the tangent vectors mentioned in this article are to be understood as real tangent vectors. However, keeping in mind the canonical identification of the real and holomorphic tangent bundles of a complex manifold, it is often convenient to use the notation of holomorphic tangent vectors. Thus, given a holomorphic coordinate $z=x+iy$ we write $\partial_z$ for $\partial_x$ and $i\partial_z$ for $\partial_y$.
\end{rem}

\begin{rem}
	\label{frame}
	Given a symmetric circle $\gamma$, the following is a convenient way to work with the tangent space to $\Lambda_\gamma^n$ at any point. One can embed the circle $\Lambda_\gamma^1$ in $\Lambda_\gamma^n$ by
	\begin{align*}
		i:\Lambda_\gamma^1  \hookrightarrow\Lambda_\gamma^n,\quad
		(z,\zeta)  \mapsto(z,0,\ldots,0,\zeta).
	\end{align*}
	Define $n$ vector fields along $i$, $X_1,\ldots,X_n$, as follows. For any $u\in S^1$ set
	\begin{align*}
		X_1(\gamma(u))=d i_{\gamma(u)}(\gamma'(u))=z'(u)\partial_{z_1}+\zeta'(u)\partial_\zeta.
	\end{align*}
	For $j=2,\ldots,n$, set
	\begin{align*}
		X_j(\gamma(u))=\left\{
		\begin{array}{rl}
			\frac{z(u)}{\sin(u)}\partial_{z_j} & u\ne0,\pi \\
			z'(0)\partial_{z_j} & u=0 \\
			-z'(\pi)\partial_{z_j} & u=\pi.
		\end{array}\right.
	\end{align*}
	It is easy to verify that for $j=1,\ldots,n$, the vector field $X_j$ is tangent to $\Lambda_\gamma^n$ and smooth. Since $X_1,\ldots,X_n$, are linearly independent, they form a smooth frame of the pullback bundle $i^*T\Lambda_\gamma^n$. Furthermore, as verified by a straightforward calculation, for $u\ne0,\pi$, the tangent vectors $X_j(\gamma(u))$, $j=2,\ldots,n$, form a basis of $T_{i(\gamma(u))}\sigma_{\zeta(u)}^{n-1}$. Note that $\sigma_{\zeta(u)}^{n-1}$ is the $O_n(\mathbb{R})$ orbit of $i(\gamma(u))$, and the above construction can be used to obtain a basis of $T_p\Lambda_\gamma^n$ for any $p\in\Lambda_\gamma^n$.
\end{rem}

The following is a basic observation which will be used later, in the proof of Theorem~\ref{boundary}.
\begin{lemma}
	\label{smoothonsphere}
	Let $\gamma$ be a symmetric circle, and let $i:\Lambda_\gamma^1\hookrightarrow\Lambda_\gamma^n$ as in Remark~\ref{frame}. Let $g:\Lambda_\gamma^n\to\mathbb{R}$ be invariant under the $O_n(\mathbb{R})$ action, such that $g\circ i:\Lambda_\gamma^1\to\mathbb{R}$ is smooth. Then $g$ is smooth.
\end{lemma}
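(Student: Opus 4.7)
The plan is to transport the question to the round sphere via the diffeomorphism $\Psi : S^n \to \Lambda_\gamma^n$ of Proposition~\ref{spherediffeo} and check smoothness of $g \circ \Psi$ pointwise on $S^n$. The $O_n(\mathbb{R})$-invariance of $g$ reduces matters to studying a single smooth \emph{even} function on $S^1$; smoothness away from the two fixed points of $O_n(\mathbb{R})$ on $S^n$ is then immediate, and smoothness at the poles is precisely the kind of statement handled by Lemma~\ref{composesqrt}.

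The first step is to introduce $\psi : S^1 \to \mathbb{R}$ by $\psi(u) = g(i(\gamma(u)))$, which is smooth by hypothesis. The key observation is that $\psi$ is \emph{even}: the reflection $\mathrm{diag}(-1,1,\ldots,1) \in O_n(\mathbb{R})$ carries $i(\gamma(u)) = (z(u),0,\ldots,0,\zeta(u))$ to $i(\gamma(-u))$, so the $O_n(\mathbb{R})$-invariance of $g$ forces $\psi(-u) = \psi(u)$. Combining $\Psi \circ \chi = \varphi$ with the $O_n(\mathbb{R})$-invariance yields
\[
g \circ \Psi(\chi(u,x)) = g(z(u) \cdot x, \zeta(u)) = \psi(u),\qquad (u,x) \in S^1 \times S^{n-1}.
\]
Since $\chi$ is a local diffeomorphism wherever $u \neq 0, \pi$, this exhibits $g \circ \Psi$ locally as $\psi$ composed with a smooth map, establishing smoothness of $g \circ \Psi$ on the complement of the two poles $(0,\ldots,0,\pm 1) \in S^n$.

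At the north pole I would use the chart $\mathbf{Y} : D^n \to S^n$, $\mathbf{Y}(y) = (y, \sqrt{1-|y|^2})$, from the proof of Proposition~\ref{spherediffeo}. For $y \neq 0$ one has $\mathbf{Y}(y) = \chi(\arcsin|y|, y/|y|)$, so
\[
g \circ \Psi \circ \mathbf{Y}(y) = \psi(\arcsin|y|).
\]
Setting $\phi = \psi \circ \arcsin$, a smooth even function on a neighborhood of $0 \in \mathbb{R}$, Lemma~\ref{composesqrt} implies $\phi \circ \sqrt{\;}$ is infinitely right-differentiable at $0$; composing with the smooth function $y \mapsto |y|^2$ then produces a function smooth on all of $D^n$ that agrees with $\psi(\arcsin|y|)$ off the origin. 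This settles the north pole; the south pole is treated identically using the chart $y \mapsto (y, -\sqrt{1-|y|^2})$ together with the fact that, as a consequence of the evenness of $\psi$ and Remark~\ref{s1}, $\psi$ is also even about $\pi \in S^1$. The only non-routine ingredient throughout is recognising the evenness of $\psi$ from the $O_n(\mathbb{R})$-invariance of $g$; once that is in hand, the rest is a direct computation modelled on the proof of Proposition~\ref{spherediffeo}.
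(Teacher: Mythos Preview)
Your proof is correct and takes a genuinely different route from the paper's. The paper factors $g$ through the $O_n(\mathbb{R})$-orbit space, which it identifies with the curve $c\subset\mathbb{C}$: since $g\circ i$ is $O_1(\mathbb{R})$-invariant it descends to a function $h:c\to\mathbb{R}$, which is shown to be smooth at the endpoints via Lemma~\ref{regular}\ref{regulara} and Lemma~\ref{composesqrt}; then $g$ equals $h$ composed with the smooth projection $\Lambda_\gamma^n\to c$. You instead transport everything to $S^n$ via $\Psi$ and mimic the pole analysis already carried out in Proposition~\ref{spherediffeo}, using directly the evenness of $\psi=g\circ i\circ\gamma$ and Lemma~\ref{composesqrt}. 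The paper's approach is a bit more conceptual---it isolates the quotient and works there---while yours has the virtue of recycling the chart computations from Proposition~\ref{spherediffeo} almost verbatim, so no new coordinate work is needed. Both arguments hinge on the same underlying fact (a smooth even function of one variable becomes a smooth function of $|y|^2$), just packaged differently.
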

\begin{proof}
	Let $c=\{\zeta(u)|u\in S^1\}\subset\mathbb{C}$, and let $P:\Lambda_\gamma^1\to c$ be the projection. Since $g\circ i$ is invariant under the $O_1(\mathbb{R})$ action, there is some $h:c\to\mathbb{R}$ such that $g\circ i=h\circ P$. We claim that $h$ is smooth. Since $P$ has a smooth local inverse anywhere away from the boundary of $c$, we only need to show smoothness of $h$ at the endpoints. Let $V\subset c$ be a small open neighborhood of $\zeta(0)$, and identify a small neighborhood of $0\in S^1$ with a real interval $(-\epsilon,\epsilon)$. By Lemma~\ref{regular} \ref{regulara}, the map $u\mapsto\zeta(\sqrt{u})$ defined on $[0,\epsilon)$ is a diffeomorphism, and hence the function $\zeta(u)\mapsto u^2$ maps $V$ smoothly into $S^1$. By Lemma~\ref{composesqrt}, $g\circ i$ depends smoothly on $u^2$, and $h$ is thus smooth at $\zeta(0)$. A similar argument shows smoothness at $\zeta(\pi)$. The projection $\Lambda_\gamma^n\to c$ is also smooth. The lemma follows.
\end{proof}

As our main objective is Lagrangian geodesics, we need to characterize the circles $\gamma$ whose corresponding Lagrangians $\Lambda_\gamma^n$ lie in $\pospathlag{n}$, the space of \emph{positive} Lagrangians in $\pathlag{n}$.
\begin{lemma}
\label{positive}
Let $\gamma$ be a symmetric circle. Then $\Lambda_\gamma^n$ is a positive Lagrangian if and only if the two following conditions hold:
\begin{enumerate}[label=(\alph*)]
\item\label{positivea} $\quad z'(u)(z(u))^{n-1}/f'(\zeta(u))=\zeta'(u)(z(u))^{n-2}/2\not\in i\mathbb{R}$ for all $u\ne0,\pi$.
\item\label{positiveb} $\quad(z'(u))^n/f'(\zeta(u))\not\in i\mathbb{R}$ for $u=0,\pi$.
\end{enumerate}
\end{lemma}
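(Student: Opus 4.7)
My plan is to test positivity directly via Definition~\ref{poslag}: $\Lambda_\gamma^n$ is positive iff $\real\Omega$ does not vanish on any frame of $T\Lambda_\gamma^n$. Since $O_n(\mathbb{R})$ preserves $\Omega$ and the $O_n(\mathbb{R})$-orbit of the embedded circle $i(\Lambda_\gamma^1)$ from Remark~\ref{frame} is all of $\Lambda_\gamma^n$, the condition needs only be checked along $i(\Lambda_\gamma^1)$, on the global frame $X_1,\ldots,X_n$ of $i^*T\Lambda_\gamma^n$ supplied there.

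For a non-polar parameter $u\ne 0,\pi$ one has $z(u)\ne 0$, so I would use the $i=1$ form of $\Omega$ in \eqref{Omega}. The vectors $X_2,\ldots,X_n$ are scalar multiples of $\partial_{z_2},\ldots,\partial_{z_n}$, while $X_1$ has components only along $\partial_{z_1}$ and $\partial_\zeta$; accordingly the determinant expressing $(dz_2\wedge\cdots\wedge dz_n\wedge d\zeta)(X_1,\ldots,X_n)$ collapses to a single term, and after combining with the prefactor $(-1)^{n+1}/(2z_1)$ one obtains
\[
\Omega(X_1,\ldots,X_n)\bigr|_{\gamma(u)} \;=\; \frac{\zeta'(u)\,z(u)^{n-2}}{2\,\sin(u)^{n-1}}.
\]
Because $\sin(u)^{n-1}$ is real and nonzero, the real part is nonzero precisely when $\zeta'(u)\,z(u)^{n-2}/2\notin i\mathbb{R}$, which is the middle expression in~\ref{positivea}. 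Equivalence with the first expression follows by differentiating the defining relation $z(u)^2=f(\zeta(u))$ to get $2zz'=f'(\zeta)\,\zeta'$ and multiplying through by $z^{n-2}/f'(\zeta)$.

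For the polar points $u=0,\pi$ the $O_1(\mathbb{R})$-equivariance of $\gamma$ forces $\zeta$ to be even around each pole, giving $\zeta'(0)=\zeta'(\pi)=0$; together with the definition of $X_j$ at the poles in Remark~\ref{frame} this reduces every $X_j$ to a real multiple of $\partial_{z_j}$. Since $z(u)=0$ here, I would switch to the form $\Omega=f'(\zeta)^{-1}\,dz_1\wedge\cdots\wedge dz_n$ and compute $\Omega(X_1,\ldots,X_n)=\pm(z'(u))^n/f'(\zeta(u))$, whose real part is nonzero exactly under~\ref{positiveb} (the $\pm$ sign is real and hence irrelevant for an $i\mathbb{R}$-test). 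One must verify along the way that $\{X_1,\ldots,X_n\}$ really is a frame at the poles, which holds because $z'(0),z'(\pi)\ne 0$ by the computation already done in the proof of Lemma~\ref{regular}. I do not anticipate a serious obstacle; the computation is essentially routine, and the only conceptual input is exploiting $O_n(\mathbb{R})$-symmetry to reduce evaluation of $\Omega$ on the whole $n$-dimensional Lagrangian to the one-parameter slice $i(\Lambda_\gamma^1)$, with the remaining work being consistent sign bookkeeping in the exterior-algebra determinant.
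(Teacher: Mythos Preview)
Your proof is correct and essentially identical to the paper's: both use the frame $X_1,\ldots,X_n$ from Remark~\ref{frame}, compute $\Omega(X_1,\ldots,X_n)$ separately at polar and non-polar points to obtain the same expressions, and invoke $SO_n(\mathbb{R})$-invariance of $\Omega$ to reduce the test from all of $\Lambda_\gamma^n$ to the slice $i(\Lambda_\gamma^1)$. One minor phrasing slip: at the poles the $X_j$ are \emph{complex} (not real) multiples of $\partial_{z_j}$, since $z'(0)\in\mathbb{C}$, but this does not affect your computation.
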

\begin{proof}
Using the frame given in Remark~\ref{frame}, a direct calculation shows that for $u\ne0,\pi,$
\begin{align*}
\Omega(X_1(\gamma(u)),\ldots,X_n(\gamma(u)))=&\frac{z'(u)}{f'(\zeta(u))}\left(\frac{z(u)}{\sin(u)}\right)^{n-1}\\ & =\frac{\zeta'(u)}{2z(u)}\left(\frac{z(u)}{\sin(u)}\right)^{n-1} \\ & = \frac{\zeta'(u)(z(u))^{n-2}}{2\sin^{n-1}(u)}\;,
\end{align*}
and for $u=0,\pi$,
\begin{align*}
\Omega(X_1(\gamma(u)),\ldots,X_n(\gamma(u)))=\pm\frac{(z'(u))^n}{f'(\zeta(u))}\;.
\end{align*}
It follows that conditions \ref{positivea} and \ref{positiveb} are equivalent to $\real\Omega$ not vanishing on $i(\Lambda_\gamma^1)$. Since any point in $\Lambda_\gamma^n$ lies in the $SO_n(\mathbb{R})$ orbit of some $p\in i(\Lambda_\gamma^1)$ and $\Omega$ is $SO_n(\mathbb{R})$ invariant, the conditions are equivalent to $\real\Omega$ not vanishing anywhere on $\Lambda_\gamma^n$.
\end{proof}

\subsection{Lagrangian paths and horizontal lifts}
\label{lagpathsmilnor}

As any symmetric circle $\gamma$ has a  corresponding Lagrangian $n$-sphere $\Lambda_\gamma^n$, a smooth family of symmetric circles $(\gamma_t)$, with $t$ varying along a real interval, gives rise to the Lagrangian path $(\Lambda_{\gamma_t}^n)$. The following is an important property of such paths.

\begin{prop}
\label{paths}
For every $\Lambda\in\pathlag{n}$,
\begin{align*}
T_\Lambda\pathlag{n}\subset\{d h|h:\Lambda\to\mathbb{R}\;\mathrm{is}\;O_n(\mathbb{R})\;\mathrm{invariant}\}.
\end{align*}
In particular, every path in $\pathlag{n}$ is exact.
\end{prop}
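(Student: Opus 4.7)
The plan is to show that an arbitrary tangent vector $\tau\in T_\Lambda\pathlag{n}$ admits an $O_n(\mathbb{R})$-invariant potential. Represent $\tau$ as the derivative at $t=0$ of a Lagrangian path $(\Lambda_{\gamma_t}^n)$ arising from a smooth family $(\gamma_t)$ of symmetric circles with $\gamma_0=\gamma$, and lift this path to a smooth map $\Psi:(-\epsilon,\epsilon)\times S^n\to M^n$ by applying the construction of Proposition~\ref{spherediffeo} pointwise in $t$. That is, define $\Psi_t$ as the unique map factoring $(u,x)\mapsto(z_t(u)\cdot x,\zeta_t(u))$ through $\chi$. Since both $\varphi_t$ and $\chi$ intertwine the $O_n(\mathbb{R})$-action on the $S^{n-1}$ factor with the $O_n(\mathbb{R})$-actions on $S^n$ and $M^n$ respectively, the map $\Psi_t$ is $O_n(\mathbb{R})$-equivariant. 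Smoothness in $t$ is inherited from the even/square-root regularity argument used in Proposition~\ref{spherediffeo}, so $\Psi$ is a smooth $O_n(\mathbb{R})$-equivariant lift of the path.

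Equivariance of $\Psi$ implies that the deformation vector field $v_0$ satisfies $v_0(g\cdot p)=g_*v_0(p)$ for every $g\in O_n(\mathbb{R})$. Combining this with $O_n(\mathbb{R})$-invariance of $\omega$, the formula of Lemma~\ref{induced} shows that the closed 1-form $\tilde\sigma$ on $S^n$ is $O_n(\mathbb{R})$-invariant, and therefore so is its pushforward $\sigma=\Psi_{0,*}\tilde\sigma$ on $\Lambda$. It remains to write $\sigma=dh$ with $h$ an $O_n(\mathbb{R})$-invariant smooth function. For $n\geq 2$ this is immediate from $H^1(S^n)=0$: choose any primitive $h'$ of $\sigma$ and average it over the compact group $O_n(\mathbb{R})$ to obtain an invariant primitive $h$. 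For $n=1$, the pullback $\gamma^*\sigma=f(u)\,du$ is invariant under $u\mapsto -u$, which forces $f(-u)=-f(u)$; hence $\int_{S^1}\gamma^*\sigma=0$, so $[\sigma]=0$ in $H^1(\Lambda_\gamma^1)$ and the same averaging argument applies. This proves the asserted inclusion. The ``in particular'' statement is then immediate, since exactness of the derivative at every time is precisely the definition of an exact path given in Section~\ref{lagpaths}.

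The main obstacle is essentially organizational: one must carefully match the $O_n(\mathbb{R})$-equivariance at each stage of the chain $(\gamma_t)\leadsto\varphi_t\leadsto\Psi_t\leadsto v_t\leadsto\tilde\sigma\leadsto\sigma$. The only genuine subtlety is the $n=1$ case, where $H^1(S^1)\neq 0$ means that exactness of $\sigma$ is not automatic but must be extracted from the $O_1(\mathbb{R})$-symmetry itself, via the observation that an invariant 1-form on $S^1$ has odd coefficient and hence vanishing period.
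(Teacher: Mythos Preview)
Your proposal is correct and follows essentially the same approach as the paper: establish $O_n(\mathbb{R})$-invariance of the derivative $\sigma$, deduce exactness (automatic for $n\geq 2$, via the vanishing-period argument for $n=1$), and then produce an invariant primitive. The only minor variation is that you obtain the invariant primitive by averaging over the compact group, whereas the paper argues directly that $h\circ A-h$ is a constant and shows this constant vanishes via an integral identity against a volume form $\tau$ satisfying $A^*\tau=\det(A)\,\tau$; both devices are standard and equally valid.
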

\begin{rem}
\label{promisedeq}
It will be shown in the proof of Theorem~\ref{initial} that for $\Lambda\in\pospathlag{n}$ we have the equality
\begin{align*}
T_\Lambda\pathlag{n}=\{d h|h:\Lambda\to\mathbb{R}\;\mathrm{is}\;O_n(\mathbb{R})\;\mathrm{invariant}\}.
\end{align*}
\end{rem}
\begin{proof}[Proof of Proposition~\ref{paths}]
	Let $(\gamma_t:S^1\to M^1),\;t\in(-\epsilon,\epsilon),$ be
    a smooth family of symmetric circles with $\Lambda_{\gamma_0}^n=\Lambda$, and for convenience let
    \begin{align*}
    \sigma:=\left.\deriv{t}\right|_{t=0}\Lambda_{\gamma_t}^n.
    \end{align*}
    Note that by definition of the derivative, the $1$-form $\sigma$ is $O_n(\mathbb{R})$ invariant, as the symplectic form $\omega$ is, and so is $\Lambda_{\gamma_t}^n$ for every $t$. We claim that $\sigma$ is exact. For $n\geq2$ it is immediate since $\sigma$ is closed and $\Lambda$ is simply connected. If $n=1$, invariance under the nontrivial element in $O_1(\mathbb{R})$ yields
    \begin{align*}
    \int_\Lambda \sigma=0,
    \end{align*}
    and exactness follows.

    We show $O_n(\mathbb{R})$ invariance of the primitive. Let $\tau$ be a volume form on $\Lambda$ which satisfies
    \begin{align*}
    	\forall A\in O_n(\mathbb{R})\quad A^*\tau=\det(A)\cdot\tau,
    \end{align*}
    and let $h:\Lambda\to\mathbb{R}$ satisfy $d h=\sigma$. Since $\sigma$ is $O_n(\mathbb{R})$ invariant, for $A\in O_n(\mathbb{R})$ we have
    \begin{align*}
    d(h\circ A-h)=d(A^*h-h)=A^*\sigma-\sigma=0.
    \end{align*}
    Since $\Lambda$ is connected, it follows from the last equation that the two functions $h\circ A$ and $h$ differ by a constant. But
    \begin{align*}
    \int_\Lambda h\circ A\wedge\tau=\det(A)\int_\Lambda A^*(h\wedge\tau)=\int_\Lambda h\wedge\tau,
    \end{align*}
    which yields $h\circ A=h$.
\end{proof}
Our next goal is to understand \emph{horizontal lifts} of paths in $\pospathlag{n}$.
\begin{rem}
	\label{fields}
	Let $O_n(\mathbb{R})$ act on $S^n$ by
	\begin{align*}
		\left(A,\left(
		\begin{array}{c}
			y_1\\\vdots\\y_{n+1}
		\end{array}\right)\right)\mapsto
		\left(
		\begin{array}{c}
			A\cdot\left(
			\begin{array}{c}
				y_1\\\vdots\\y_n
			\end{array}\right)\\
			y_{n+1}
		\end{array}
		\right).
	\end{align*}
	Let $(\gamma_t),\;t\in(-\epsilon,\epsilon)$, be a family of symmetric circles, and for every $t$ let $\Psi_t:S^n\to\Lambda_{\gamma_t}^n$ be the diffeomorphism induced by $\gamma_t$ as in Proposition~\ref{spherediffeo}. Note that $\Psi_t$ is $O_n(\mathbb{R})$ equivariant. The map $\Psi:(-\epsilon,\epsilon)\times S^n\to M^n$ is a lift of the Lagrangian path $(\Lambda_{\gamma_t}^n)$. Let $\xi_t$ denote the deformation vector field corresponding to $\Psi$ at time $t$. Note that for $t\in(-\epsilon,\epsilon)$ the pullback bundle $\Psi_t^*TM^n$ is $O_n(\mathbb{R})$ equivariant, and $\xi_t$ is $O_n(\mathbb{R})$ invariant.
\end{rem}

\begin{rem}
	\label{capitali}
	When analyzing paths in $\pathlag{n}$, it is rather convenient to have a standard embedding of $M^1$ in $M^n$. The letter $I$ will be used below for the embedding
	\begin{align*}
		I:M^1\hookrightarrow M^n,\quad(z,\zeta)\mapsto(z,0,\ldots,0,\zeta).
	\end{align*}
	Note that $I$ is an extension of the embedding $i$ of Remark~\ref{frame}.
\end{rem}

\begin{prop}
	\label{horizontal}
	Let $(\gamma_t),\;t\in(-\epsilon,\epsilon)$, be as above, such that $\Lambda_{\gamma_t}^n\in\pospathlag{n}$ for all $t$, and let $\Psi:(-\epsilon,\epsilon)\times S^n\to M^n$ be the lift of $(\Lambda_{\gamma_t}^n)$ induced by $(\gamma_t)$ as in Remark~\ref{fields}. Then $\Psi$ is horizontal if and only if
	\begin{align*}
	\pderiv[z]{t}\cdot\frac{(z_t(u))^{n-1}}{f'(\zeta_t(u))}=\pderiv[\zeta]{t}\cdot\frac{(z_t(u))^{n-2}}{2}\in i\mathbb{R}
	\end{align*}
	for all $u,t$.
\end{prop}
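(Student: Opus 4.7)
The plan is to reduce the horizontal condition, which a priori must hold at every point of $\Lambda_{\gamma_t}^n$, to a condition along the meridian $I(\Lambda^1_{\gamma_t})$ using the $O_n(\mathbb{R})$-symmetry of the construction, and then to extract the stated formula by a direct determinant computation in the frame of Remark~\ref{frame}.

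For the reduction, Remark~\ref{fields} gives $O_n(\mathbb{R})$-invariance of $\xi_t$, while the formula $\Omega = (1/f'(\zeta))\,dz_1\wedge\cdots\wedge dz_n$ shows that $A^*\Omega = \det(A)\,\Omega$ for $A \in O_n(\mathbb{R})$. Thus $i_{\xi_t}\real\Omega$ is equivariant up to sign along $\Lambda^n_{\gamma_t}$, and vanishes on an entire $SO_n(\mathbb{R})$-orbit whenever it vanishes at one point. When $n \geq 2$, $SO_n(\mathbb{R})$ acts transitively on each $\sigma^{n-1}_{\zeta_t(u)}$, so every point of $\Lambda^n_{\gamma_t}$ lies in the $SO_n(\mathbb{R})$-orbit of some $I(\gamma_t(u))$, and the horizontal condition reduces to checking vanishing of $i_{\xi_t}\real\Omega|_{T\Lambda^n_{\gamma_t}}$ along $I(\Lambda^1_{\gamma_t})$. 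For $n = 1$ the reduction is vacuous since $\Lambda^1_{\gamma_t} = I(\Lambda^1_{\gamma_t})$.

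Next I would compute along the meridian. Differentiating $\Psi_t \circ \chi(u, e_1) = \varphi_t(u, e_1) = I(\gamma_t(u))$ in $t$ gives
\[
\xi_t|_{I(\gamma_t(u))} = \pderiv[z]{t}(u)\,\partial_{z_1} + \pderiv[\zeta]{t}(u)\,\partial_\zeta.
\]
For $u \neq 0, \pi$ the frame $X_1, \ldots, X_n$ of Remark~\ref{frame} spans $T_{I(\gamma_t(u))}\Lambda^n_{\gamma_t}$, so it suffices to compute $\Omega(\xi_t, X_{j_1}, \ldots, X_{j_{n-1}})$ for every $(n-1)$-subset. Using $\Omega = (1/f'(\zeta))\,dz_1\wedge\cdots\wedge dz_n$, the columns of the resulting matrix coming from $\xi_t$ and from $X_1$ are both supported only in the $dz_1$ row, so the contraction vanishes whenever $X_1$ appears among the arguments. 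The only remaining subset gives a diagonal matrix, yielding
\[
\Omega(\xi_t, X_2, \ldots, X_n) = \pderiv[z]{t}\cdot\frac{(z_t(u))^{n-1}}{f'(\zeta_t(u))\,\sin^{n-1}u}.
\]
The horizontal condition at $u \neq 0, \pi$ is therefore that this quantity lie in $i\mathbb{R}$, and the second equality of the proposition follows by differentiating $z_1^2 + \cdots + z_n^2 = f(\zeta)$ in $t$ at points of $I(\Lambda^1_{\gamma_t})$, which gives $2z_t\,\partial_t z = f'(\zeta_t)\,\partial_t \zeta$.

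Finally I would dispatch the endpoints $u = 0, \pi$: by the symmetric-circle axioms $z_t(u) \equiv 0$ there, so $\partial_t z = 0$; and since $\zeta_t(0), \zeta_t(\pi)$ lie in the finite set $f^{-1}(0)$ and depend continuously on $t$, they are independent of $t$, forcing $\partial_t \zeta = 0$ as well. Both sides of the proposition's equality vanish and the imaginary condition holds trivially. The main obstacle is really just the bookkeeping of the $(n-1)$-tuple determinants and the careful identification of the $SO_n(\mathbb{R})$-orbits in $\Lambda^n_{\gamma_t}$; neither is deep, given the explicit frame of Remark~\ref{frame} and the $O_n(\mathbb{R})$-equivariance of $\Psi$.
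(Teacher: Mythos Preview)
Your proposal is correct and follows essentially the same route as the paper: reduce to the meridian $I(\Lambda^1_{\gamma_t})$ by $SO_n(\mathbb{R})$-invariance, observe that among the contractions $\Omega(\xi_t,X_1,\ldots,\widehat{X_j},\ldots,X_n)$ only the one with $j=1$ can be nonzero, and then read off the stated formula. The one cosmetic difference is in how the vanishing for $j\geq 2$ is justified: the paper phrases it as ``$\xi_t$ and $X_1$ are tangent to the complex curve $I(M^1)$, hence $\mathbb{C}$-collinear, and $\Omega$ is of type $(n,0)$,'' whereas you phrase it as a determinant argument (both columns supported only in the $dz_1$ row when $\Omega$ is written as $(1/f'(\zeta))\,dz_1\wedge\cdots\wedge dz_n$). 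These are the same observation. Your explicit treatment of $u=0,\pi$---showing that $\xi_t$ itself vanishes there because $z_t(0)=z_t(\pi)=0$ and $\zeta_t(0),\zeta_t(\pi)$ are locked to the discrete set $f^{-1}(0)$---is a detail the paper leaves implicit.
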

\begin{proof}
We embed $S^1$ in $S^n$ by
\begin{align*}
	e:S^1\hookrightarrow S^n,\quad u\mapsto(\sin(u),0,\ldots,0,\cos(u)).
\end{align*}

Let $\xi_t$ denote the deformation vector field corresponding to $\Psi$ at time $t$. It follows from Remark~\ref{fields} and $SO_n(\mathbb{R})$ invariance of $\Omega$ that $\xi_t$ is \horizontal if and only if it is \horizontal on $e(S^1)$. For $u_0\in S^1$, the space of $(n-1)$-vectors tangent to $\Lambda_{\gamma_t}^n$ at $I(\gamma_t(u_0))$ is spanned by $X_1(\gamma_t(u_0))\wedge\ldots\wedge\widehat{X_j(\gamma_t(u_0))}\wedge\ldots\wedge X_n(\gamma_t(u)_0),\;j=1,\ldots,n,$ where the hat indicates an omitted argument. Hence, $\xi_t$ is horizontal at $e(u_0)$ if and only if we have
\begin{equation}
	\label{horeq}
	\Omega(\xi_t(e(u_0)),X_1(\gamma_t(u_0)),\ldots,\widehat{X_j(\gamma_t(u_0))},\ldots,X_n(\gamma_t(u_0)))\in i\mathbb{R},\quad j=1,\ldots,n.
\end{equation}
Since $\xi_t(e(u_0))$ and $X_1(\gamma_t(u_0))$ are both tangent to the $1$-dimensional complex submanifold $I(M^1)\subset M^n$, these two vectors are collinear over $\mathbb{C}$. As the form $\Omega$ is of type $(n,0)$, this means that the expression in~\eqref{horeq} vanishes for $j=2,\ldots,n$, with no condition on the family $(\gamma_t)$. Thus, $\xi_t$ is horizontal if and only if~\eqref{horeq} holds for $j=1$. Substituting
\[
\Omega=\pm\frac{1}{2z_1}dz_2\wedge\ldots\wedge dz_n\wedge d\zeta=\frac{1}{f'(\zeta)}dz_1\wedge\ldots\wedge dz_n
\]
and
\[
X_k(\gamma_t(u_0))=\frac{z(u_0)}{\sin u_0}\partial_{z_k},\quad k=2,\ldots,n,
\]
this is equivalent to the desired condition.
\end{proof}

\subsection{Real blowup}
\label{blowup}
For our next constructions we shall use real blowups, which are described shortly below. We discuss only blowups of manifolds of dimension $2,$ as this is the only relevant case for our purposes.

Let $D$ denote the open unit ball in $\mathbb{R}^2$. The \emph{real blowup} of $D$ is defined by
\begin{align*}
\tilde{D}=\{(p,l)\in D\times\mathbb{R}\mathbb{P}^1|p\in l\},
\end{align*}
with the projection
\begin{align*}
\pi:\tilde{D}  \to D,\quad
(p,l)  \mapsto p.
\end{align*}
The \emph{exceptional divisor}, $E=\pi^{-1}(0)$, is just a copy of the real projective line, whereas the restriction of $\pi$ to the complement of $E$,
\begin{align*}
\pi|_{\tilde{D}\setminus E}:\tilde{D}\setminus E\to D\setminus0,
\end{align*}
is a diffeomorphism.

We shall use polar coordinates to parametrize $\tilde{D}$ in the following manner. For any $0<\delta<1$ and $a\in S^1$, define
\begin{equation*}
\mathbf{X}_{\delta,a}:(-\delta,\delta)\times(a,a+\pi)  \to\tilde{D},
\end{equation*}
by
\begin{equation*}
(r,\theta)  \mapsto((r\cos\theta,r\sin\theta),[\cos\theta:\sin\theta]).
\end{equation*}
Thus, $X_{\delta,a}$ is a parametrization of
\begin{align*}
U_{\delta,a}:=\{(p,l)\in\tilde{D}|l\ne[\cos a:\sin a],|p|<\delta\}.
\end{align*}
This is similar to the standard use of polar coordinates, but note that here we allow $r$ to admit negative values. The composition of $\pi$ with $\mathbf{X}_{\delta,a}$ is given by $(r,\theta)\mapsto(r\cos\theta,r\sin\theta)$. The intersection of the exceptional divisor and $U_{\delta,a}$ is given by
\begin{align*}
E\cap U_{\delta,a}=\{r=0\}=\{(0,l)|l\ne[\cos a:\sin a]\}.
\end{align*}
Any $\mathbf{X}_{\delta,a}$ as above will be called below \emph{a polar parametrization}, and $U_{\delta,a}$ will be called \emph{a polar open subset}. Note that $\tilde{D}$ can be covered by two open polar subsets.

Any smooth vector field on $D\setminus\{0\}$ can be pulled back to $\tilde{D}\setminus E$ by $(d\pi)^{-1}$. We express this pullback in coordinates. The tangent space to $\tilde{D}$ at any point in a polar open subset is spanned by the vector fields $\partial_r,\partial_\theta$, that are induced by a corresponding polar parametrization. Differentiating $\pi$ yields
\begin{align*}
d\pi(\partial_r) & =\cos\theta\partial_x+\sin\theta\partial_y, \\
d\pi(\partial_\theta) & =-r\sin\theta\partial_x+r\cos\theta\partial_y,
\end{align*}
and by inverting the corresponding matrix anywhere outside the exceptional divisor we get
\begin{align}
\label{pulltoblowup}
d\pi^{-1}(\partial_x) & =\cos\theta\partial_r-\frac{\sin\theta}{r}\partial_\theta, \nonumber \\
d\pi^{-1}(\partial_y) & =\sin\theta\partial_r+\frac{\cos\theta}{r}\partial_\theta\;.
\end{align}

Recall the strict transform of a curve. Given a smooth curve $c$ in $D$, that passes through the origin, \emph{the strict transform of $c$} is given by
\begin{align*}
\tilde{c}=\overline{(\pi|_{\tilde{D}\setminus E})^{-1}(c)}=(\pi|_{\tilde{D}\setminus E})^{-1}(c)\cup\{(0,T_0c)\}.
\end{align*}
If $c$ does not pass through the origin, the strict transform $\tilde{c}$ is nothing more than the inverse image of $c$ under $\pi$. We will use the following well known facts later:
\begin{lemma}\mbox{}
\label{liftcurve}
\begin{enumerate}[label=(\alph*)]
\item\label{liftcurvea}   Let $c$ be a smooth curve in $D$, and $\tilde{c}$ the strict transform. Then $\tilde{c}$ is a smooth curve in $\tilde{D}$, which intersects the exceptional divisor transversally.
\item\label{liftcurveb}  Let $\tilde{c}$ be a smooth curve in $\tilde{D}$ that intersects the exceptional divisor transversally, and set $c=\pi(\tilde{c})$. If the intersection $\tilde{c}\cap E$ consists of at most one point, $c$ is a smooth curve in $D$.
\end{enumerate}
\end{lemma}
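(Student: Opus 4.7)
The plan is to verify both parts by local computations in the polar parametrizations $\mathbf{X}_{\delta,a}$, using the elementary observation (a special case of Hadamard's lemma) that if a smooth function $f$ on an interval satisfies $f(0)=0$, then $f(t)=tg(t)$ for some smooth $g$.

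For part~\ref{liftcurvea}, the claim is immediate away from $0\in D$ since $\pi|_{\tilde D\setminus E}$ is a diffeomorphism onto $D\setminus\{0\}$, so I only need to work near a point at which $c$ passes through $0$. Parametrize that piece of $c$ by a smooth regular embedding $\alpha(t)=(x(t),y(t))$ with $\alpha(0)=0$, and write $x(t)=tu(t)$, $y(t)=tv(t)$ with $u,v$ smooth and $(u(0),v(0))=\alpha'(0)\ne 0$. Then
\[
\tilde\alpha(t)=(\alpha(t),[u(t):v(t)])
\]
is a well-defined smooth extension across $t=0$ of $(\pi|_{\tilde D\setminus E})^{-1}\circ\alpha$, hence a smooth parametrization of $\tilde c$ in a neighborhood of $E$. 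Choosing a polar chart $\mathbf{X}_{\delta,a}$ whose omitted line $[\cos a:\sin a]$ differs from $[u(0):v(0)]$, I produce smooth $r(t),\theta(t)$ with $\tilde\alpha(t)=\mathbf{X}_{\delta,a}(r(t),\theta(t))$, and a direct computation gives $r'(0)=|\alpha'(0)|\ne 0$. Since $E=\{r=0\}$ in this chart, this simultaneously yields smoothness of $\tilde c$ and transversality of its intersection with $E$.

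For part~\ref{liftcurveb}, I work locally near the unique point $p$ of $\tilde c\cap E$ (the case $\tilde c\cap E=\emptyset$ being handled by $\pi$ being a diffeomorphism off $E$). In a polar chart around $p$, parametrize $\tilde c$ as $\tilde\alpha(t)=\mathbf{X}_{\delta,a}(r(t),\theta(t))$ with $r(0)=0$; transversality to $E$ translates to $r'(0)\ne 0$. Then
\[
\pi\circ\tilde\alpha(t)=(r(t)\cos\theta(t),r(t)\sin\theta(t))
\]
is smooth, and its derivative at $t=0$ equals $r'(0)(\cos\theta(0),\sin\theta(0))\ne 0$, so $c$ is smooth and regular at the origin. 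Away from $E$, $\pi$ is a local diffeomorphism; and the hypothesis that $\tilde c\cap E$ contains at most one point forces $\pi|_{\tilde c}$ to be globally injective, since no point of $\tilde c\setminus E$ maps to $0$. Hence $c=\pi(\tilde c)$ is a smooth curve in $D$.

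The only genuinely nontrivial step is the smooth factorization $(x(t),y(t))=t(u(t),v(t))$ in part~\ref{liftcurvea}, which is what makes the lift $\tilde\alpha$ extend smoothly across the exceptional divisor; once that is in hand, the rest is routine calculation using the formula $\pi\circ\mathbf{X}_{\delta,a}(r,\theta)=(r\cos\theta,r\sin\theta)$.
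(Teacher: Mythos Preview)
Your proof is correct and is the standard argument. Note, however, that the paper does not actually prove this lemma: it introduces it with ``We will use the following well known facts later'' and gives no proof. So there is no approach in the paper to compare against; your write-up simply supplies the routine verification the authors chose to omit. One very minor quibble: the value $r'(0)=|\alpha'(0)|$ may be off by a sign depending on which branch the chart $\mathbf{X}_{\delta,a}$ selects, but of course only $r'(0)\ne 0$ is needed.
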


We end this section with blowups of general surfaces. Let $M$ be a smooth $2$-dimensional manifold and $\{p_j\}\subset M$ a discrete subset. The blowup manifold $\tilde{M}$ of $M$ at $\{p_j\}$ is obtained by replacing a small disk around each $p_j$ by its real blowup. Gluing together all projections of the involved blown up disks with the identity map of $M$ yields the blowup projection
\begin{align*}
	\pi:\tilde{M}\to M.
\end{align*}

\subsection{The horizontal foliation}
\label{horfoliation}

Motivated by Proposition~\ref{horizontal}, we define a subbundle
\begin{align*}
\tau\subset T(M^1\setminus\{z=0\})
\end{align*}
by
\begin{align}
\label{subbundle}
\tau(p)=\{v\in T_pM^1|d\zeta(v)\in iz^{2-n}\mathbb{R}\},
\end{align}
and denote by $F$ the foliation tangent to $\tau$. Recall that a family $(\gamma_t)$ induces a lift of the Lagrangian path $(\Lambda_{\gamma_t}^n)$ as in Remark~\ref{fields}. By Proposition~\ref{horizontal}, the induced lift is horizontal if and only if for any fixed $u\in S^1,\;u\ne0,\pi,$ the path $(\gamma_t(u))$ is contained in a leaf of $F$. This relates $F$ closely to horizontal lifts of paths of Lagrangian spheres, and eventually to the construction of geodesics. The problem, though, is that all the symmetric circles $\gamma$ in discussion pass through points with $z=0$, which are singularities of $F$. As it turns out, this problem is solved by using the real blowup.

Let $q_1=(0,\zeta_1),\ldots,q_k=(0,\zeta_k)$, denote all points in $M^1$ with $z=0$, and let $\tilde{M}^1$ denote the manifold obtained by blowing $M^1$ up at $q_1,\ldots,q_k$, with the projection $\pi:\tilde{M}^1\to M^1$. Let $E_1,\ldots,E_k\subset\tilde{M}^1$ denote the respective exceptional divisors. For $j=1,\ldots,k,$ set
\begin{align}
	\label{zeros}
	Z_j=\left\{(0,\theta)\left|n\theta-\arg(f'(\zeta_j))\in\frac{\pi}{2}+\pi\mathbb{Z}\right.\right\}\subset E_j,
\end{align}
using polar coordinates over $q_j$ compatible with the $z$ coordinate (that is, $z=re^{i\theta}$). Note that compatibility with $z$ determines $\theta$ up to $\pi$, and $Z_j$ is thus well defined. Set
\begin{align*}
	H:=\bigcup_{j=1}^kZ_j.
\end{align*}

\begin{prop}
\label{foliation}
There exists a unique foliation $\tilde{F}$ of $\tilde{M}^1\setminus H$, such that $\pi$ carries the leaves of $\tilde{F}$ to the leaves of $F$. The foliation $\tilde{F}$ is tangent to all exceptional divisors $E_1,\ldots,E_k$. Furthermore, it satisfies the following properties:
\begin{enumerate}[label=(\alph*)]
\item\label{foliationa} Given a symmetric circle $\gamma$, the Lagrangian $\Lambda_\gamma^n$ is positive if and only if the strict transform $\tilde{\gamma}$ does not pass through any point in $H$ and intersects all leaves of $\tilde{F}$ transversally.
\item\label{foliationb} Let $(\gamma_t)$ be a family of symmetric circles such that for every $t$ the Lagrangian sphere $\Lambda_{\gamma_t}^n$ is positive. Then the lift induced by $(\gamma_t)$ is horizontal if and only if for every fixed $u\in S^1$ the curve $t\mapsto\tilde{\gamma}_t(u)$ is contained in a leaf of $\tilde{F}$, where $\tilde{\gamma}_t$ is the strict transform of $\gamma_t$.
\end{enumerate}
\end{prop}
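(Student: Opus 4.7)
The plan is to work locally near each puncture $q_j=(0,\zeta_j)$: write down a $1$-form defining $F$, rescale it appropriately on the blowup, and verify that the rescaled form extends smoothly across the exceptional divisor with zero locus exactly $Z_j$.

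The condition $d\zeta(v)\in iz^{2-n}\mathbb{R}$ on $v\in T_pM^1$ is equivalent to $\real(z^{n-2}d\zeta(v))=0$, so $F=\ker\eta$ where $\eta:=\real(z^{n-2}\,d\zeta)$ on $M^1\setminus\{z=0\}$. Near $q_j$, since $f$ has a simple zero at $\zeta_j$, $z$ is a holomorphic local coordinate on $M^1$, with $d\zeta=(2z/f'(\zeta_j))(1+g(z))\,dz$ for some holomorphic $g$ vanishing at $0$. Passing to polar coordinates $z=re^{i\theta}$ and writing $c=2/f'(\zeta_j)$, a direct expansion gives
\[
\eta=r^{n-1}\bigl(\real(ce^{in\theta})\,dr-\imaginary(ce^{in\theta})\,r\,d\theta+r^{2}\alpha\bigr)
\]
for some smooth $1$-form $\alpha$. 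The rescaled form $\tilde\eta:=\eta/r^{n-1}$ extends smoothly to a neighborhood of $E_j$ in $\tilde{M}^1$. At $r=0$ it reduces to $\real(ce^{in\theta})\,dr$, which vanishes precisely on $Z_j$ (the set $n\theta-\arg f'(\zeta_j)\in \pi/2+\pi\mathbb{Z}$); since $|ce^{in\theta}|$ is a nonzero constant, $\tilde\eta$ is nonvanishing throughout a neighborhood of $E_j\setminus Z_j$, so it defines $\tilde F$ there. On $\tilde M^1\setminus\bigcup_j E_j$, $\pi$ is a diffeomorphism and $\tilde F$ is forced to equal $\pi^{*}F$, giving uniqueness. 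Because $\tilde\eta|_{r=0}$ has only a $dr$ component, $\partial_\theta\in\ker\tilde\eta$ on $E_j$, so each connected component of $E_j\setminus Z_j$ is a leaf of $\tilde F$.

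For part~\ref{foliationa}, I apply Lemma~\ref{positive}. For $u\in S^1\setminus\{0,\pi\}$, transversality of $\gamma$ to $F$ at $\gamma(u)$ reads $\real(z^{n-2}(u)\zeta'(u))\neq 0$; the identity $2zz'=f'(\zeta)\zeta'$ turns this into condition~\ref{positivea} of Lemma~\ref{positive}. For $u\in\{0,\pi\}$, the strict transform $\tilde\gamma$ meets $E_j$ at the point with $\theta=\arg z'(u)$, and missing $Z_j$ there is exactly $n\arg z'(u)-\arg f'(\zeta(u))\notin\pi/2+\pi\mathbb{Z}$, i.e.\ condition~\ref{positiveb}. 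Transversality of $\tilde\gamma$ to $\tilde F$ at this boundary point is then automatic, since by Lemma~\ref{liftcurve}\ref{liftcurvea} $\tilde\gamma$ meets $E_j$ transversally and $E_j$ is the local $\tilde F$-leaf through the intersection.

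For part~\ref{foliationb}, Proposition~\ref{horizontal} characterizes horizontality of the induced lift by $\partial_t\zeta\cdot z_t^{n-2}/2\in i\mathbb{R}$. At $u$ with $z_t(u)\neq 0$ this is precisely $\partial_t\gamma_t(u)\in\tau(\gamma_t(u))$, which says that $t\mapsto\tilde\gamma_t(u)$ lies in a leaf of $\tilde F$. At $u\in\{0,\pi\}$, both sides of the equivalence hold automatically: the condition of Proposition~\ref{horizontal} is vacuous once $z_t(u)=0$ (handling $n=2$ via $2z\partial_t z=f'(\zeta)\partial_t\zeta$), and by positivity together with continuity in $t$ the lift $\tilde\gamma_t(u)$ remains in a single connected component of $E_j\setminus Z_j$, a leaf of $\tilde F$. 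The main obstacle is the rescaling computation in the second paragraph, which simultaneously pins down the form of $H$ in~\eqref{zeros} and the tangency of $\tilde F$ to the exceptional divisors.
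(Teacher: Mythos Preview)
Your argument is correct and close in spirit to the paper's, but you dualize the key local computation: the paper picks a vector field tangent to $F$, namely $X=if'(\zeta)z^{1-n}r^{n}\hat\partial_z$, pulls it back through $d\pi^{-1}$ using the polar formulas~\eqref{pulltoblowup}, and reads off that the result extends smoothly to $E_j$ with zero set $Z_j$ and is tangent to $E_j$. You instead take the defining $1$-form $\eta=\real(z^{n-2}\,d\zeta)$, divide by $r^{n-1}$, and read off the same conclusions from $\tilde\eta|_{r=0}=\real(ce^{in\theta})\,dr$. Both are the same rescaling trick; your version is perhaps a touch more economical, since a nonvanishing $1$-form immediately gives a line field without having to argue that the rescaled object spans the right direction. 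On the other hand, the paper's explicit vector field $\tilde X$ feeds directly into the later pseudo-Hamiltonian constructions of Section~\ref{PHVF}, so their choice is not accidental. Your treatment of parts~\ref{foliationa} and~\ref{foliationb} via Lemma~\ref{positive}, Lemma~\ref{liftcurve}\ref{liftcurvea}, and Proposition~\ref{horizontal} matches the paper's almost verbatim; your explicit handling of $u\in\{0,\pi\}$ in part~\ref{foliationb} (vacuity of the horizontality condition at $z=0$, and the path $\tilde\gamma_t(u)$ staying in a component of $E_j\setminus Z_j$) is a point the paper leaves implicit.
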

\begin{proof}
Since $\pi|_{\tilde{M}^1\setminus\bigcup E_j}$ is a diffeomorphism, we define the leaves of $\tilde{F}$ anywhere away from $E_1,\ldots,E_k$ to be the strict transforms of the leaves of $F$. Uniqueness of $\tilde{F}$ follows from $\tilde{M}^1\setminus\cup E_j$ being dense in $\tilde{M}^1$. In order to show that $\tilde{F}$ can be extended to $\tilde{M}^1\setminus H$, we observe $F$ close to $q_1,\ldots,q_k$.

For any $\epsilon>0$, let $D_\epsilon$ denote the open ball of radius $\epsilon$ in the complex plane, centered at $0$. For $j=1,\ldots,k$, let $V_j$ be an open neighborhood of $q_j$, in which $\zeta$ is a smooth function of $z$, and let $\epsilon_j$ be small enough so that $D_{\epsilon_j}$ is contained in the image of $V_j$ under the projection $(z,\zeta)\mapsto z$. Thus for every $j$ there is an embedding
\begin{align*}
\varphi_j:D_{\epsilon_j}  \hookrightarrow V_j\subset M^1,\quad
z  \mapsto(z,\zeta(z)),
\end{align*}
and without loss of generality we assume $V_j=\varphi_j(D_{\epsilon_j})$. For convenience, we would like to omit the $\partial_\zeta$ component when describing a vector field on $V_j$, so we denote
\begin{align*}
	\hat{\partial}_z:=d\varphi_j(\partial_z)=\partial_z+\frac{2z}{f'(\zeta)}\partial_\zeta.
\end{align*}

Let $U\subset\pi^{-1}(V_j)$ be a polar open subset with polar coordinates $r,\theta$, compatible with the $z$ coordinate. On $V_j\setminus\{q_j\}$ the subbundle $\tau$ may be expressed by
\begin{align*}
\tau(p)=\{v\in T_pM^1|d z(v)\in if'(\zeta)z^{1-n}\mathbb{R}\}.
\end{align*}
Hence inside $\pi(U)\setminus\{q_j\}$ the leaves of $F$ can be thought of as the trajectories of the nonvanishing vector field
\begin{align}
\label{field1}
X(z)=if'(\zeta)z^{1-n}r^n\hat{\partial}_z=if'(\zeta)re^{i(1-n)\theta}\hat{\partial}_z.
\end{align}
Since $f'(\zeta)$ does not vanish in $V_j$, the functions $|f'(\zeta)|$ and $\arg(f'(\zeta))$ are well defined and smooth in $z$. We write
\begin{align*}
\acute{r} & :=|f'(\zeta)|\cdot r, \\
\acute{\theta} & :=\frac{\pi}{2}+\arg(f'(\zeta))+(1-n)\theta.
\end{align*}
Let $x,y,$ be the real coordinates on $V_j$ corresponding to $z$. Moving from complex to real notation (see Remark~\ref{realhol}), equation~\eqref{field1} can be rewritten as
\begin{align*}
X(z)=\acute{r}e^{i\acute{\theta}}\hat{\partial}_z=\acute{r}\cos\acute{\theta}\partial_x+\acute{r}\sin\acute{\theta}\partial_y.
\end{align*}
Pulling $X$ back to $U\setminus E_j$, we obtain from \eqref{pulltoblowup}
\begin{align*}
d\pi^{-1}(X) & =\acute{r}\cos\acute{\theta}\cos\theta\partial_r-\acute{r}\cos\acute{\theta}\frac{\sin\theta}{r}\partial_\theta+\acute{r}\sin\acute{\theta}\sin\theta\partial_r+\acute{r}\sin\acute{\theta}\frac{\cos\theta}{r}\partial_\theta \\
& =\acute{r}\cos(\acute{\theta}-\theta)\partial_r+\frac{\acute{r}}{r}\sin(\acute{\theta}-\theta)\partial_\theta \\
& =|f'(\zeta)|\cdot r\cos\left(\frac{\pi}{2}+\arg(f'(\zeta))-n\theta\right)\partial_r\\
& +|f'(\zeta)|\sin\left(\frac{\pi}{2}+\arg(f'(\zeta))-n\theta\right)\partial_\theta.
\end{align*}
It follows that $d\pi^{-1}(X)$ can be extended smoothly to a vector field $\tilde{X}$ which is defined on the whole of $U$. Note that along $E_j$ the coefficient of $\partial_r$ in $\tilde{X}$ vanishes. In other words, $\tilde{X}$ is tangent to $E_j$. The set of points where $\tilde{X}$ vanishes is exactly $Z_j$, as defined in \eqref{zeros}. We define the desired $\tilde{F}$ along $U\setminus Z_j$ to be the foliation tangent to $\tilde{X}$. Note that up to sign, the vector field $\tilde{X}$ does not depend on the choice of the polar open subset $U$. Hence $\tilde{F}$ is well defined along $\tilde{M^1}\setminus H$. It is tangent to every exceptional divisor $E_j$ since the corresponding vector field $\tilde{X}$ is.

We show that $\tilde{F}$ possesses the required properties, beginning with \ref{foliationa}.
Let $\gamma,\;u\mapsto(z(u),\zeta(u)),$ be such that the Lagrangian $\Lambda_\gamma^n$ is positive. By condition \ref{positiveb} of Lemma~\ref{positive}, $(z'(0))^n/f'(\zeta(0))$ and $(z'(\pi))^n/f'(\zeta(\pi))$ are not pure imaginary. Hence, by \eqref{zeros} we have $\tilde{\gamma}(0),\tilde{\gamma}(\pi)\not\in H$, and $\tilde{\gamma}$ does not pass through any singular point of $\tilde{F}$. By condition \ref{positivea} of Lemma~\ref{positive}, for any $u\ne0,\pi$, the product $\zeta'(u)\cdot z(u)^{n-2}$ is not pure imaginary. Hence $\gamma'(u)\not\in\tau(\gamma(u))$, and $\gamma$ is not tangent to any leaf of $F$. Since $\tilde{\gamma}$ is the strict transform of $\gamma$, it intersects the exceptional divisors over $\gamma(0)$ and $\gamma(\pi)$ transversally by Lemma~\ref{liftcurve}, and so it is transverse to all leaves of $\tilde{F}$. Conversely, assume that the strict transform $\tilde{\gamma}$ does not pass through any singular point of $\tilde{F}$ and is not tangent to any of its leaves. Then the two conditions of Lemma~\ref{positive} hold, and $\Lambda_\gamma^n$ is positive.

We verify property \ref{foliationb}. Let $(\gamma_t)$ be such that $\Lambda_{\gamma_t}^n\in\pospathlag{n}$ for all $t$. By construction, for a fixed $u\in S^1$, the path $t\mapsto\tilde{\gamma}_t(u)$ is contained in a leaf of $\tilde{F}$ if and only if for every $u\ne0,\pi$, the path $t\mapsto\gamma_t(u)$ is contained in a leaf of $F$. By Proposition~\ref{horizontal}, the latter is equivalent to $(\gamma_t)$ inducing a horizontal lift.\qedhere
\end{proof}

The following definition makes sense in view of Proposition~\ref{foliation}.

\begin{dfn}
	Let $(\gamma_t)$ be a family of symmetric circles, such that for every $t$ the Lagrangian sphere $\Lambda_{\gamma_t}^n$ is positive. We say the family $(\gamma_t)$ is \emph{horizontal} if for a fixed $u\in S^1$, the path $t\mapsto\tilde{\gamma}_t(u)$ is contained in a leaf of $\tilde{F}$.
\end{dfn}

\begin{lemma}
	\label{horizontalcircles}
	Let $\Lambda:[0,1]\to\pospathlag{n}$ be a smooth path, and let $\gamma_0$ be a symmetric circle with $\Lambda_{\gamma_0}^n=\Lambda(0)$. Then $\gamma_0$ can be extended to a smooth horizontal family of symmetric circles $(\gamma_t),\;t\in[0,1]$, such that for all $t$ we have $\Lambda_{\gamma_t}^n=\Lambda(t)$.
\end{lemma}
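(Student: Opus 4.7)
My approach is to invoke the horizontal lift existence result recalled at the end of Section~\ref{laggeodesics}: every initial embedding $\Psi_0:L\to X$ with image $\Lambda(0)$ extends uniquely to a horizontal lift of a path $\Lambda$. I would take $\Psi_0:S^n\to\Lambda(0)$ to be the diffeomorphism induced by $\gamma_0$ via Proposition~\ref{spherediffeo}, obtaining a smooth horizontal lift $\Psi:[0,1]\times S^n\to M^n$, and then extract the family $(\gamma_t)$ by restricting $\Psi_t$ to the equator $e(S^1)\subset S^n$, where $e(u)=(\sin u,0,\ldots,0,\cos u)$, and pulling back through $I:M^1\hookrightarrow M^n$.

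The first technical step I would carry out is establishing $O_n(\mathbb{R})$-equivariance of $\Psi_t$ for every $t$. Because $\det A=\pm1$ for $A\in O_n(\mathbb{R})$, the action preserves $\omega$ and $\real\Omega$ up to sign, hence preserves the horizontal distribution. Thus $\Psi^A_t(x):=A^{-1}\cdot\Psi_t(A\cdot x)$ is another horizontal lift, and since $\Psi_0$ is itself $O_n(\mathbb{R})$-equivariant (directly from its construction in Proposition~\ref{spherediffeo}), uniqueness of horizontal lifts forces $\Psi^A=\Psi$. For $n\geq2$, the subgroup $O_{n-1}(\mathbb{R})\subset O_n(\mathbb{R})$ fixing the first coordinate has fixed locus $e(S^1)\subset S^n$ and $I(M^1)\subset M^n$, so equivariance immediately yields $\Psi_t(e(S^1))\subset I(M^1)$, and I can define $\gamma_t(u):=I^{-1}(\Psi_t(e(u)))$. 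For $n=1$ the map $I$ is the identity and I just set $\gamma_t=\Psi_t\circ e$.

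Next I would verify that $(\gamma_t)$ is a smooth family of symmetric circles with $\Lambda_{\gamma_t}^n=\Lambda(t)$. Applying $A=\mathrm{diag}(-1,1,\ldots,1)$ to the equivariance relation gives $\gamma_t(-u)=(-z_t(u),\zeta_t(u))$, which is exactly condition~\ref{symmetrica}. The points $e(0)=(0,\ldots,0,1)$ and $e(\pi)=(0,\ldots,0,-1)$ are fixed by all of $O_n(\mathbb{R})$, so their images $\Psi_t(e(0)),\Psi_t(e(\pi))$ lie in the isolated $O_n(\mathbb{R})$-fixed locus $\{(0,\ldots,0,\zeta):f(\zeta)=0\}$; since $\Psi_t$ is a diffeomorphism onto $\Lambda(t)$, no other $u\in S^1$ can be sent to a branch point, so $z_t(u)\ne0$ whenever $u\ne0,\pi$. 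To see $\Lambda_{\gamma_t}^n=\Lambda(t)$, I would note that any $\chi(u,x)\in S^n$ is of the form $A\cdot e(u)$ for some $A\in O_n(\mathbb{R})$ carrying $(1,0,\ldots,0)$ to $x$, so equivariance gives $\Psi_t(\chi(u,x))=A\cdot I(\gamma_t(u))=(z_t(u)\cdot x,\zeta_t(u))$; this is precisely the formula of the lift induced by $(\gamma_t)$ in the sense of Remark~\ref{fields}, so $\Lambda(t)=\Psi_t(S^n)=\Lambda_{\gamma_t}^n$.

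Horizontality of $(\gamma_t)$ is then immediate: since $\Psi$ coincides with the lift induced by $(\gamma_t)$ in Remark~\ref{fields} and is horizontal as a map to $M^n$, Proposition~\ref{horizontal} yields the concrete pointwise condition on $(\gamma_t)$, and Proposition~\ref{foliation}\ref{foliationb} upgrades this to the statement that $(\gamma_t)$ is horizontal as a family of symmetric circles. The main obstacle I foresee is really just the pair of $O_n(\mathbb{R})$-symmetry arguments in the second paragraph --- establishing equivariance of $\Psi_t$ via uniqueness of horizontal lifts, and then using the fixed-locus argument to force $\Psi_t(e(S^1))$ into $I(M^1)$; once these are in place, everything else is a direct unwinding of the set-up in Sections~\ref{milnorfiber}--\ref{lagpathsmilnor}.
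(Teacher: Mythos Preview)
Your proof is correct but takes a genuinely different route from the paper's. The paper works entirely in the one-dimensional picture inside the blowup $\tilde{M}^1$: it starts from an arbitrary smooth family $(\beta_t)$ of symmetric circles lifting $\Lambda$, uses transversality of each $\tilde{\beta}_t$ to the foliation $\tilde{F}$ (Proposition~\ref{foliation}) to split the deformation vector $\partial\tilde{\beta}/\partial t$ into a piece $\xi_t$ tangent to the circle and a piece tangent to $\tilde{F}$, and then integrates the pulled-back vector fields $\eta_t = d\tilde{\beta}_t^{-1}(\xi_t)$ on $S^1$ to obtain $O_1(\mathbb{R})$-equivariant reparametrizations $\varphi_t$ with $\tilde{\gamma}_t := \tilde{\beta}_t\circ\varphi_t$ horizontal. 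Your approach instead works in $M^n$: you invoke the abstract existence and uniqueness of horizontal lifts (quoted from~\cite{solomon} at the end of Section~\ref{laggeodesics}), then deduce $O_n(\mathbb{R})$-equivariance of $\Psi_t$ from uniqueness, and use the fixed-locus of $O_{n-1}(\mathbb{R})$ to force $\Psi_t(e(S^1))\subset I(M^1)$ and extract $\gamma_t$. Your route is more conceptual and avoids the explicit ODE on $S^1$; the paper's argument, by contrast, is self-contained in dimension~$1$, does not rely on the black-box horizontal-lift result from~\cite{solomon}, and stays within the foliation framework $\tilde{F}$ that drives the rest of the paper.
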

\begin{proof}
	Let $(\beta_t),\;t\in[0,1]$, be a smooth family of symmetric circles such that $\beta_0=\gamma_0$ and $\Lambda_{\beta_t}^n=\Lambda(t)$ for $t\in[0,1]$. Denote by $b_t$ the image of $\tilde{\beta}_t$ inside $\tilde{M}^1$. By Proposition~\ref{foliation}, the circle $b_t$ is transverse to $\tilde{F}$ for all $t$.  It follows that for every $t$ there is a unique section $\xi_t$ of the pullback bundle $\tilde{\beta}_t^*Tb_t$, such that for every $u\in S^1$ the tangent vector
	\begin{align*}
		v_t(u):=\pderiv[\tilde{\beta}]{t}(t,u)+\xi_t(u)\in T_{\tilde{\beta}_t(u)}\tilde{M}^1
	\end{align*}
	is tangent to $\tilde{F}$. Note that $\xi_t$ is $O_1(\mathbb{R})$ invariant. For $(t,u)\in[0,1]\times S^1$, set
	\begin{align*}
		\eta_t(u)=d(\tilde{\beta}_t)^{-1}_{\tilde{\beta}_t(u)}(\xi_t(u)),
	\end{align*}
	and note that $(\eta_t)$ is a smooth family of vector fields on $S^1$, which is $O_1(\mathbb{R})$ invariant. Let $(\varphi_t:S^1\to S^1)$ be the family of diffeomorphisms which satisfies
	\begin{align*}
		\varphi_0=id,\qquad\forall(t,u)\in[0,1]\times S^1\quad\deriv{t}\varphi_t(u)=\eta_t(\varphi_t(u)).
	\end{align*}
	For every $t$, the diffeomorphism $\varphi_t$ is $O_1(\mathbb{R})$ equivariant. Define a family of embeddings $(\tilde{\gamma}_t:S^1\to \tilde{M}^1)$ by
	\begin{align*}
	\tilde{\gamma}_t=\tilde{\beta_t}\circ\varphi_t.
	\end{align*}
	Note that since $\varphi_0=id$, the embedding $\tilde{\gamma}_0$ obtained by this definition indeed coincides with the strict transform of the given symmetric circle $\gamma_0$. Taking time derivative, we see that for a fixed $u\in S^1$
	\begin{align*}
		\deriv{t}\tilde{\gamma}_t(u)&=\pderiv[\tilde{\beta}]{t}(t,\varphi_t(u))+d\tilde{\beta}_{t,\varphi_t(u)}\left(\deriv{t}\varphi_t(u)\right)\\
		&=\pderiv[\tilde{\beta}]{t}(t,\varphi_t(u))+d\tilde{\beta}_{t,\varphi_t(u)}(\eta_t(\varphi_t(u)))\\
		&=\pderiv[\tilde{\beta}]{t}(t,\varphi_t(u))+\xi_t(\varphi_t(u))\\
		&=v_t(\varphi_t(u)),
	\end{align*}
	where by construction, the latter is tangent to $\tilde{F}$. Set $\gamma_t=\pi\circ\tilde{\gamma}_t$. Since $\tilde{\gamma}_t$ is $O_1(\mathbb{R})$ equivariant, every $\gamma_t$ is a symmetric circle. By the above calculations, the family $(\gamma_t)$ satisfies the desired properties.
\end{proof}

\section{Geodesics}

\subsection{Pseudo-Hamiltonian vector fields}
\label{PHVF}

The following will be useful for the proofs of Theorems~\ref{initial} and~\ref{boundary}.
\begin{dfn}
Let $\gamma$ be a symmetric circle with $\Lambda_\gamma^n\in\pospathlag{n}$. For $u\in S^1$, we let $l_\gamma(u)$ denote the leaf of $\tilde{F}$ which contains $\tilde{\gamma}(u)$. An open subset $U\subset\tilde{M}^1$ is \emph{uniquely projected onto $\gamma$} if
\begin{enumerate}[label=(\alph*)]
\item The strict transform $\tilde{\gamma}$ is contained in $U$,
\item For $q\in U$ there is a unique $u\in S^1$ such that $q\in l_\gamma(u)$,
\item\label{UPc} The projection $P:U\to\Lambda_\gamma^1$ which maps every point in $l_\gamma(u)$ to $\gamma(u)$ is a smooth submersion.
\end{enumerate}
Whenever discussing an open subset $U$ which is uniquely projected onto $\gamma$, we will denote the projection in \ref{UPc} by $P$, and refer to it as \emph{the projection of $U$}.
\end{dfn}
\begin{dfn}
Let $U\subset \tilde{M}^1$ be an open subset and $f:U\to\mathbb{R}$ smooth. A smooth vector field $X$ on $U$ is called \emph{the pseudo-Hamiltonian vector field corresponding to $f$} if it satisfies
\begin{align*}
i_X\pi^*\omega=d f.
\end{align*}
The reason for \emph{pseudo} is that $\pi^*\omega$ is not a symplectic form, as it vanishes on all exceptional divisors. Since the complement of the union of all exceptional divisors is dense in $\tilde{M}^1$, the pseudo-Hamiltonian vector field corresponding to $f$ is unique, if it exists. Note that a general function does not necessarily have a corresponding pseudo-Hamiltonian vector field.
\end{dfn}
\begin{prop}
	\label{pHamiltonian}
	Let $\gamma$ be a symmetric circle with $\Lambda_\gamma^n\in\pospathlag{n}$, and let $h:\Lambda_\gamma^1\to\mathbb{R}$ be smooth such that
	\begin{align*}
		\left.\deriv[h]{u}\right|_{u=0}=\left.\deriv[h]{u}\right|_{u=\pi}=0.
	\end{align*}
	Let $U\subset \tilde{M}^1$ be open and uniquely projected onto $\gamma$.
	\begin{enumerate}[label=(\alph*)]
	\item\label{phama} The function $h\circ P:U\to\mathbb{R}$ has a smooth corresponding pseudo-Hamiltonian vector field $X$, which is tangent to $\tilde{F}$.
	\item\label{phamb} For any $u_0\ne0,\pi$, and $q\in l_\gamma(u_0)\cap U$, the vector field $X$ of \ref{phama} vanishes at $q$ if and only if $d h/d u$ vanishes at $u_0$. For $q\in l_\gamma(0)\cap U$ (or $l_\gamma(\pi)\cap U$), the vector field $X$ vanishes at $q$ if and only if $d^2h/d u^2$ vanishes at $0$ (or $\pi$).
	\end{enumerate}
\end{prop}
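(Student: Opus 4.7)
The plan is to construct $X$ first on the open dense subset of $U$ where $\pi^*\omega$ is non-degenerate, and then verify that it extends smoothly across each component of the exceptional divisors contained in $U$. Away from $E_1,\ldots,E_k$ the form $\pi^*\omega$ is non-degenerate, so $i_X\pi^*\omega = d(h\circ P)$ determines $X$ uniquely. Since $P$ is constant on each leaf of $\tilde F$, the 1-form $d(h\circ P)$ annihilates every $\tilde F$-tangent vector. In a 2-dimensional symplectic vector space, any vector that is $\omega$-orthogonal to a line lies in that line, so $X$ is automatically tangent to $\tilde F$ off the exceptional divisors. Moreover, since $P$ is a submersion, $d(h\circ P)(q) = h'(P(q))\,dP(q)$ vanishes iff $h'(P(q)) = 0$, which proves part~(b) at every $q \in l_\gamma(u_0)\cap U$ with $u_0 \neq 0,\pi$.

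The main work is smoothness of $X$ across $l_\gamma(0)\cap U$ and $l_\gamma(\pi)\cap U$; I treat the former. In a polar chart $(r,\theta)$ compatible with $z = re^{i\theta}$ near $\tilde\gamma(0)$, a direct computation gives $\pi^*\omega = r\, g(r,\theta)\, dr\wedge d\theta$ with $g$ smooth and nowhere vanishing. Writing $X = a\partial_r + b\partial_\theta$ and $d(h\circ P) = \alpha\,dr + \beta\,d\theta$, the defining equation becomes $rg\,a = \beta$ and $rg\,b = -\alpha$, so by Hadamard's lemma smoothness of $X$ reduces to showing that $\alpha$ and $\beta$ vanish on $\{r=0\}$. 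Two facts give this. First, the connected component of the exceptional divisor meeting $\tilde\gamma(0)$ is tangent to $\tilde F$ by Proposition~\ref{foliation} and passes through $\tilde\gamma(0)$, so it coincides with $l_\gamma(0)\cap U$; hence $P\equiv 0$ along $\{r=0\}$ and one can write $P(r,\theta) = r P_1(\theta) + O(r^2)$. Second, $h'(0) = 0$ by hypothesis, so Taylor expansion yields
\[
h\circ P = h(0) + \tfrac{1}{2}h''(0)\, r^2 P_1(\theta)^2 + O(r^3).
\]
Differentiating, both $\alpha$ and $\beta$ are divisible by $r$ as smooth functions, and $X$ extends smoothly across $\{r=0\}$. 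Tangency to $\tilde F$ then extends by continuity (and is consistent with $X|_{r=0}$ having no $\partial_r$ component, hence being tangent to $E_j$, which is itself a leaf).

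The expansion above also settles part~(b) at $q\in l_\gamma(0)\cap U$: one reads off $a|_{r=0} = 0$ and $b|_{r=0} = -h''(0)\,P_1(\theta)^2 / g(0,\theta)$. Since $P$ is a submersion that is constant along $\{r=0\}$, the only nontrivial derivative of $P$ there is $\partial_r P$, so $P_1(\theta)$ is nowhere zero; combined with $g\neq 0$, this gives $X(q) = 0$ iff $h''(0) = 0$, uniformly over $l_\gamma(0)\cap U$. The same argument works at $u_0 = \pi$. The main obstacle in this proof is the smoothness across the exceptional divisor: the 2-form $\pi^*\omega$ has a simple zero in $r$, and $d(h\circ P)$ must match this order of vanishing. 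Both hypotheses of the proposition are used precisely to secure this matching vanishing, namely $h'(0) = h'(\pi) = 0$ together with the fact that the relevant exceptional divisor components are themselves leaves of $\tilde F$, so that $P$ itself vanishes to first order in $r$ along them.
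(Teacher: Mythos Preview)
Your argument is correct and follows essentially the same route as the paper: both show that in polar coordinates near the exceptional divisor one has $\pi^*\omega = r\,g\,dr\wedge d\theta$ and $d(h\circ P) = r\cdot\chi$, so the defining equation reduces to the nondegenerate $i_X\,g\,dr\wedge d\theta = \chi$, and tangency to $\tilde F$ comes from $h\circ P$ being leafwise constant. Your explicit Taylor expansion of $h\circ P$ in $r$ (using that $E_0\cap U = l_\gamma(0)\cap U$, so $P$ is constant there, together with $h'(0)=0$) and the observation that $P_1(\theta)=\partial_r P|_{r=0}\neq 0$ because $P$ is a submersion make the verification of part~(b) at the exceptional points more transparent than the paper's terser assertion that $\chi=0$ there iff $h''(0)=0$.
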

\begin{proof}
We start with \ref{phama}. Denote by $E_0,E_\pi,$ the exceptional divisors over $\gamma(0),\gamma(\pi),$ respectively. At any point in $U\setminus(E_0\cup E_\pi)$, the form $\pi^*\omega$ is nondegenerate. The function $h\circ P|_{U\setminus(E_0\cup E_\pi)}$ thus has a corresponding Hamiltonian vector field $X$, and we need to show that $X$ can be extended smoothly to the whole of $U$.

Let $V$ be an open polar neighborhood over $\gamma(0)$ with polar coordinates $r,\theta$. Note that
\begin{align*}
d(h\circ P)=d P^*h=P^*d h,
\end{align*}
and since $d h$ vanishes at $\gamma(0)$ by assumption, $d(h\circ P)$ vanishes on $\{r=0\}\subset V\cap U$. Hence there exists a smooth 1-form $\chi$ defined on $V\cap U$ such that
\begin{align}
\label{1form}
d(h\circ P)=r\cdot\chi
\end{align}
along $V\cap U$.

We study the behavior of $\pi^*\omega$ close to $E_0$. For that, we first express $\omega$ close to $\gamma(0)$ in terms of $x$ and $y$, the real coordinates such that $z=x+iy$. Using our notation from the proof of Proposition~\ref{foliation},
\begin{align*}
\omega(\alpha\hat{\partial}_z,\beta\hat{\partial}_z) & =\omega\left(\alpha\partial_z+\frac{2z\alpha}{f'(\zeta)}\partial_\zeta,\beta\partial_z+\frac{2z\beta}{f'(\zeta)}\partial_\zeta\right)\\
& =\imaginary\left(\overline{\alpha}\beta+\overline{\left(\frac{2z\alpha}{f'(\zeta)}\right)}\frac{2z\beta}{f'(\zeta)}\right)\\
& =\left(1+\frac{4|z|^2}{|f'(\zeta)|^2}\right)\imaginary(\overline{\alpha}\beta).
\end{align*}
Set
\begin{align*}
\psi(z)=\left(1+\frac{4|z|^2}{|f'(\zeta)|^2}\right),
\end{align*}
(recall that close to $\gamma(0)$ the coordinate $\zeta$ is a smooth function of $z$) and obtain
\begin{align}
\label{sympform}
\omega=\psi(z)d x\wedge d y,
\end{align}
where $\psi\ge1$ is smooth and bounded near $\gamma(0)$. Since the projection $\pi$ is given by $(r,\theta)\mapsto(r\cos\theta,r\sin\theta)$, we have
\begin{align*}
\pi^*d x=\cos\theta d r-r\sin\theta d\theta,\quad\pi^*d y=\sin\theta d r+r\cos\theta d\theta,
\end{align*}
and
\begin{align*}
\pi^*d x\wedge d y=r(\cos^2\theta+\sin^2\theta)d r\wedge d\theta=r d r\wedge d\theta.
\end{align*}
Substituting in \eqref{sympform} yields
\begin{align}
\label{sympblowup}
\pi^*\omega=r\cdot\psi(z)d r\wedge d\theta.
\end{align}

Using \eqref{1form} and \eqref{sympblowup}, the equation
\begin{align*}
i_X\pi^*\omega=d(h\circ P)
\end{align*}
can be written as
\begin{align*}
i_Xr\cdot\psi(z)d r\wedge d\theta=r\cdot\chi,
\end{align*}
which is equivalent, anywhere outside $E_0$, to
\begin{align}
\label{reduced}
i_X\psi(z)d r\wedge d\theta=\chi.
\end{align}
Since the form $\psi(z)d r\wedge d\theta$ is nowhere degenerate, $X$ can be extended smoothly uniquely to the whole of $V\cap U$. Uniqueness implies that the extension is independent of $V$ and thus  well defined on $E_0\cap U$. Using the same argument for $E_\pi$, we conclude the existence of the pseudo-Hamiltonian vector field. It is tangent to $\tilde{F}$ as $h\circ P$ is constant along each of its leaves.

We show \ref{phamb}. It follows from \eqref{reduced} that $X=0$ if and only if $\chi=0$. For $q\in l_\gamma(u_0)$ where $u_0\ne0,\pi$, the form $\chi$ vanishing at $q$ is equivalent to $d(h\circ P)$ vanishing at $q$, and hence to $d h/d u$ vanishing at $u_0$ (since $P$ is a submersion). For $q\in l_\gamma(0)$ (or $l_\gamma(\pi)$), the vanishing of $\chi$ at $q$ is equivalent to $d^2h/d u^2$ vanishing at $0$ (or $\pi$).\qedhere
\end{proof}

\subsection{Pseudo-Hamiltonian flow and geodesics}
\label{PHFG}

The following proposition establishes the relation between geodesics in $\pospathlag{n}$ and pseudo-Hamiltonian flow in $\tilde{M}^1$. It will be the main argument in the proof of Theorem~\ref{initial}.
\begin{prop}
\label{gflow}
 Let $\gamma_0$ be a symmetric circle with $\Lambda_{\gamma_0}^n\in\pospathlag{n}$, let $h_0:\Lambda_{\gamma_0}^n\to\mathbb{R}$ be smooth and invariant under the $O_n(\mathbb{R})$ action and recall the embedding
\begin{align*}
i:\Lambda_{\gamma_0}^1  \to\Lambda_{\gamma_0}^n,\quad
(z,\zeta)  \mapsto(z,0,\ldots,0,\zeta).
\end{align*}
Let $U\subset\tilde{M}^1$ be open and uniquely projected onto $\gamma_0$.
\begin{enumerate}[label=(\alph*)]
 \item\label{gflowa} The function $h_0\circ i\circ P:U\to\mathbb{R}$ has a smooth corresponding pseudo-Hamiltonian vector field $X$ along $U$, which is tangent to $\tilde{F}$ and invariant under the $O_1(\mathbb{R})$ action.
\item\label{gflowb} Let $(\tilde{\gamma}_t:S^1\to U)$ be a smooth family of embeddings which extends  $\tilde{\gamma}_0$ and satisfies
\begin{align}
	\label{xprecise}
\pderiv[\tilde{\gamma}]{t}(t,u)=X(\tilde{\gamma}_t(u))
\end{align}
for all $t,u$. Then for every $t$ the composition $\gamma_t=\pi\circ\tilde{\gamma}_t$ is a symmetric circle with $\Lambda_{\gamma_t}^n\in\pospathlag{n}$, and the Lagrangian path $(\Lambda_{\gamma_t}^n)$ is a geodesic with
\begin{align*}
\left.\deriv{t}\right|_{t=0}\Lambda_{\gamma_t}^n=d h_0.
\end{align*}
\item\label{gflowc}Let $(\alpha_t)$ be a smooth horizontal family of symmetric circles, such that for every $t$ the image of the strict transform $\tilde{\alpha}_t$ is contained in $U$. Assume $\alpha_0=\gamma_0$, and that the Lagrangian path $(\Lambda_{\alpha_t}^n)$ is a geodesic with derivative
\begin{align*}
\left.\deriv{t}\right|_{t=0}\Lambda_{\alpha_t}^n=d h_0.
\end{align*}
Then for every $t$ we have $\alpha_t=\gamma_t$.
\end{enumerate}
\end{prop}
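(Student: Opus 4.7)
The plan is to treat parts (a), (b), (c) in sequence, with part (b) the main content. Part (a) is essentially Proposition~\ref{pHamiltonian} applied to $h := h_0 \circ i$. I would first verify the hypotheses: $h$ is smooth because $h_0$ and $i$ are smooth, and $h \circ \gamma_0$ is even on $S^1$ since $\gamma_0$ is $O_1(\mathbb{R})$-equivariant and $h_0$ is invariant under the $O_1(\mathbb{R})$ subgroup of $O_n(\mathbb{R})$ acting on the first complex coordinate, so $dh/du$ vanishes at $u = 0, \pi$. Proposition~\ref{pHamiltonian} then produces $X$ tangent to $\tilde{F}$. For $O_1(\mathbb{R})$-invariance, after replacing $U$ by its $O_1(\mathbb{R})$-symmetrization, the form $\pi^*\omega$, the map $P$, and therefore $d(h_0 \circ i \circ P)$ are all $O_1(\mathbb{R})$-invariant. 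Uniqueness of the pseudo-Hamiltonian vector field on the dense complement of the exceptional divisors then forces $X$ to be $O_1(\mathbb{R})$-invariant.

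For part~(b), I would first show that each $\gamma_t$ is a symmetric circle with $\Lambda_{\gamma_t}^n$ positive. The $O_1(\mathbb{R})$-invariance of $X$, combined with the $O_1(\mathbb{R})$-equivariance of $\tilde{\gamma}_0$, makes each $\tilde{\gamma}_t$ equivariant. Each exceptional divisor is invariant under the flow of $X$ since $\tilde{F}$ is tangent to it (Proposition~\ref{foliation}), so the transversality of $\tilde{\gamma}_0$ to the $E_j$, guaranteed by positivity of $\Lambda_{\gamma_0}^n$ via Proposition~\ref{foliation}~\ref{foliationa} and Lemma~\ref{liftcurve}~\ref{liftcurvea}, is preserved in $t$; then Lemma~\ref{liftcurve}~\ref{liftcurveb} yields smoothness of $\gamma_t = \pi \circ \tilde{\gamma}_t$, so $\gamma_t$ is a symmetric circle. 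Transversality of $\tilde{\gamma}_t$ to $\tilde{F}$ is similarly preserved by tangency of $X$ to $\tilde{F}$, and $\tilde{\gamma}_t \subset U$ avoids $H$ because points of $H$ are singular for $\tilde{F}$ and lie on no leaf, hence are excluded from $U$. By Proposition~\ref{foliation}~\ref{foliationa}, each $\Lambda_{\gamma_t}^n$ is positive. Finally, by construction the curves $t \mapsto \tilde{\gamma}_t(u)$ are integral curves of $X$, hence lie in leaves of $\tilde{F}$, so $(\gamma_t)$ is horizontal.

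Still within part~(b), it remains to show $(\Lambda_{\gamma_t}^n)$ is a geodesic with initial derivative $dh_0$. Define $h_t : \Lambda_{\gamma_t}^n \to \mathbb{R}$ by $h_t \circ \Psi_t = h_0 \circ \Psi_0$, where $\Psi_t$ is the diffeomorphism of Proposition~\ref{spherediffeo}. Since horizontal lifts preserve $\Psi_t^* \real\Omega$ (because $i_{v_t}\real\Omega = 0$ and $d\real\Omega = 0$), integrating shows $h_t \in \mathcal{H}_{\Lambda_{\gamma_t}^n}$. By the horizontal-lift form of the geodesic equation recalled in Section~\ref{laggeodesics}, the path is a geodesic once the derivative $\sigma_t$ of the Lagrangian path equals $dh_t$. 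I would verify this on the frame $X_1, \ldots, X_n$ of Remark~\ref{frame} at $I(\gamma_t(u))$: the pairings with $X_j$ for $j \geq 2$ vanish on both sides by $O_n(\mathbb{R})$-invariance and the type-$(n,0)$ computation from the proof of Proposition~\ref{horizontal}, while the pairing with $X_1$ reduces to $\omega^{M^1}(\partial_t \gamma_t(u), \gamma_t'(u)) = d(h_0 \circ i)(\gamma_0'(u))$. This last identity follows by pushing the defining equation $i_X \pi^*\omega = d(h_0 \circ i \circ P)$ down through $\pi$, using the crucial observation that $P \circ \tilde{\gamma}_t(u) = \gamma_0(u)$ for all $t$ because the flow of $X$ preserves the leaves of $\tilde{F}$ on which $P$ is constant; Lemma~\ref{smoothonsphere} extends the identity smoothly across the two poles. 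The required initial value $\sigma_0 = dh_0$ is then the special case $t = 0$.

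For part~(c), given such an $(\alpha_t)$, I would run the same calculation in reverse: the geodesic condition combined with horizontality produces a family $h_t^{\alpha}$ with $h_t^{\alpha} \circ \Psi_t^{\alpha} = h_0 \circ \Psi_0$, yielding $\omega^{M^1}(\partial_t \alpha_t(u), \alpha_t'(u)) = d(h_0 \circ i)(\gamma_0'(u))$. Comparing with the analogous identity for $(\gamma_t)$, the difference $\partial_t \alpha_t(u) - d\pi\, X(\tilde{\alpha}_t(u))$ is $\omega$-orthogonal to $\alpha_t'(u)$; since $\dim_{\mathbb{R}} M^1 = 2$ and $\omega$ is nondegenerate, this forces the difference to be collinear with $\alpha_t'(u)$. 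Yet both vectors lie in $\tau$ (by horizontality of $(\alpha_t)$ and tangency of $X$ to $\tilde{F}$), which is transverse to $\alpha_t'(u)$ by positivity of $\Lambda_{\alpha_t}^n$, so the difference vanishes. Lifting to $\tilde{M}^1$ gives $\partial_t \tilde{\alpha}_t = X \circ \tilde{\alpha}_t$ away from the exceptional divisors, and continuity in $u$ extends this to all of $S^1$. Uniqueness of ODE solutions with initial datum $\tilde{\alpha}_0 = \tilde{\gamma}_0$ then yields $\tilde{\alpha}_t = \tilde{\gamma}_t$, hence $\alpha_t = \gamma_t$. The main obstacle I anticipate is the careful handling of the two poles where $d\pi$ degenerates on the exceptional divisors and $\tilde{F}$ is only tangent (rather than transverse) to those divisors; this is precisely where the pseudo-Hamiltonian construction of Proposition~\ref{pHamiltonian}~\ref{phama} and Lemma~\ref{smoothonsphere} are designed to absorb the difficulty.
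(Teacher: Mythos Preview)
Your proposal is correct and follows essentially the same route as the paper's proof: part~(a) via Proposition~\ref{pHamiltonian} after checking evenness of $h_0\circ i\circ\gamma_0$; part~(b) by preserving transversality under the flow of $X$ (giving positivity via Proposition~\ref{foliation}), then verifying $\sigma_t=dh_t$ on the frame $X_1,\ldots,X_n$ using the defining relation $i_X\pi^*\omega=d(h_0\circ i\circ P)$ together with $P\circ\tilde\gamma_t(u)=\gamma_0(u)$; part~(c) by reversing that computation and using transversality of $\tilde\alpha_t$ to $\tilde F$ to force $\partial_t\tilde\alpha_t=X(\tilde\alpha_t)$, then ODE uniqueness. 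Two cosmetic remarks: the vanishing of $\sigma_t(X_j)$ for $j\ge2$ is most cleanly cited from Proposition~\ref{paths} (or a direct $\omega$-computation) rather than from the $(n,0)$-type argument of Proposition~\ref{horizontal}, and the appeal to Lemma~\ref{smoothonsphere} in part~(b) is unnecessary since both $\sigma_t$ and $dh_t$ are already smooth $1$-forms and agreement for $u\neq0,\pi$ extends by continuity.
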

\begin{proof}
We begin with \ref{gflowa}. Since $h_0$ is invariant under the $O_n(\mathbb{R})$ action, it follows that
\begin{align*}
h_0\circ i(\gamma_0(-u))=h_0\circ i(\gamma_0(u))
\end{align*}
for $u\in S^1$. Hence,
\begin{align*}
\left.\deriv{u}\right|_{u=0}h_0\circ i=\left.\deriv{u}\right|_{u=\pi}h_0\circ i=0.
\end{align*}
Proposition~\ref{pHamiltonian} \ref{phama} thus implies that $h_0\circ i\circ P$ has a smooth corresponding pseudo-Hamiltonian vector field which is tangent to  $\tilde{F}$. It is invariant under the $O_1(\mathbb{R})$ action as $\omega$ and $h_0\circ i\circ P$ are.

We show \ref{gflowb}. Since $\gamma_0$ is an embedding, $\tilde{\gamma}_0$ intersects the exceptional divisors (which will be referred to as $E_0,E_\pi$) transversally, by Lemma~\ref{liftcurve}. Since $X$ is tangent to $E_0,E_\pi$, the embedding $\tilde{\gamma}_t$ is transverse to the exceptional divisors for all $t$. Setting $\gamma_t:=\pi\circ\tilde{\gamma}_t$ we thus obtain a smooth family of embeddings $(\gamma_t:S^1\to M^1)$. Since $\Lambda_{\gamma_0}^1$ and $X$ are invariant under the $O_1(\mathbb{R})$ action, $\gamma_t$ is a symmetric circle for all $t$. By Lemma~\ref{lagrangian} and Lemma~\ref{regular} \ref{regularb}, the family $(\gamma_t)$ induces a Lagrangian path $(\Lambda_{\gamma_t}^n)$. As $\tilde{\gamma}_0$ intersects all leaves of $\tilde{F}$ transversally, and $X$ is tangent to those leaves, every $\tilde{\gamma}_t$ intersects the leaves transversally, and $(\Lambda_{\gamma_t}^n)$ is a path of positive Lagrangians by Proposition~\ref{foliation}.

Let $\Psi$ denote the lift of $(\Lambda_{\gamma_t}^n)$ induced by $(\gamma_t)$ as in Remark~\ref{fields}. Note that $\Psi$ is horizontal by~\eqref{xprecise}, as $X$ is tangent to $\tilde{F}$. For every $t$ let
\begin{align*}
	\varphi_t:\Lambda_{\gamma_0}^n\to\Lambda_{\gamma_t}^n
\end{align*}
denote the diffeomorphism corresponding to $\Psi$. That is, $\varphi_t=\Psi_t\circ\Psi_0^{-1}$. Define $h_t:\Lambda_{\gamma_t}^n\to\mathbb{R}$ by
\begin{align*}
h_t=h_0\circ\varphi_t^{-1}.
\end{align*}
Note that for every $t$ the function $h_t$ is $O_n(\mathbb{R})$ invariant as $\varphi_t$ is $O_n(\mathbb{R})$ equivariant. We show that the time derivative of $\Lambda_{\gamma_t}^n$ is given by
\begin{align*}
\deriv{t}\Lambda_{\gamma_t}^n=d h_t,
\end{align*}
and by that complete the proof. Recall the embedding $I$ of Remark~\ref{capitali}. For some $t$ and $u\ne0,\pi,$ let $X_1(\gamma_t(u)),\ldots,X_n(\gamma_t(u))\in T_{I(\gamma_t(u))}\Lambda_{\gamma_t}^n$ as in Remark~\ref{frame}. Note that by construction we have
\begin{align*}
I(\gamma_t(u))=\varphi_t(I(\gamma_0(u))).
\end{align*}
Hence,
\begin{align*}
h_t(I(\gamma_t(u)))=h_0\circ I(\gamma_0(u))=h_0\circ i(\gamma_0(u))=h_0\circ i\circ P(\tilde{\gamma}_t(u)).
\end{align*}
Using this together with \eqref{xprecise} and $X$ corresponding to $h_0\circ i\circ P$, we obtain
\begin{align*}
d (h_t)_{I(\gamma_t(u))}(X_1) & =d(h_0\circ i\circ P)_{\tilde{\gamma}_t(u)}\left(\pderiv[\tilde{\gamma}]{u}(t,u)\right)\\
& =\pi^*\omega\left(\pderiv[\tilde{\gamma}]{t}(t,u),\pderiv[\tilde{\gamma}]{u}(t,u)\right)\\
& =\omega\left(\pderiv[\gamma]{t}(t,u),\pderiv[\gamma]{u}(t,u)\right).
\end{align*}
Since $I^*\omega=\omega$, we conclude
\begin{align}
\label{derivative1}
d(h_t)_{I(\gamma_t(u))}(X_1)=\omega\left(d I_{\gamma_t(u)}\left(\pderiv[\gamma]{t}\right),X_1\right)=\deriv{t}\Lambda_{\gamma_t}^n(X_1).
\end{align}
For $j=2,\ldots,n,$ we have
\begin{align}
\label{derivative2}
d h_t(X_j)=0=\deriv{t}\Lambda_{\gamma_t}^n(X_j),
\end{align}
where the left hand equality follows from $h_t$ being invariant under the $O_n(\mathbb{R})$ action, and the right hand one from Proposition~\ref{paths}. It follows from \eqref{derivative1} and \eqref{derivative2} that $d\Lambda_{\gamma_t}^n/d t$ and $d h_t$ agree on $I(\Lambda_{\gamma_t}^1)$. By invariance under the $SO_n(\mathbb{R})$ action, the forms agree on the whole of $\Lambda_{\gamma_t}^n$.

We prove \ref{gflowc}. It is enough to show that the family $(\tilde{\alpha}_t)$ satisfies
\begin{align*}
\pderiv[\tilde{\alpha}]{t}(t,u)=X(\tilde{\alpha}_t(u))
\end{align*}
for all $t,u$. Let $\tilde{a}_t\subset\tilde{M}^1$ stand for the image of $\tilde{\alpha}_t$, and set
\begin{align*}
	\tilde{h}_t=h_0\circ i\circ P|_{\tilde{a}_t}:\tilde{a}_t\to\mathbb{R}.
\end{align*}
Let $\Psi$ denote the lift of the Lagrangian path $(\Lambda_{\alpha_t}^n)$, induced by $(\alpha_t)$ as in Remark~\ref{fields}, and set
\begin{align*}
h_t=h_0\circ\Psi_0\circ\Psi_t^{-1}:\Lambda_{\alpha_t}^n\to\mathbb{R}.
\end{align*}
Since $(\Lambda_{\alpha_t}^n)$ is a geodesic, and the lift $\Psi$ is horizontal (by Proposition~\ref{foliation}), for every $u,t$, we have
\begin{align*}
	\pi^*\omega\left(\pderiv[\tilde{\alpha}]{t}(t,u),\pderiv[\tilde{\alpha}]{u}(t,u)\right)&=\omega\left(\pderiv[\alpha]{t}(t,u),\pderiv[\alpha]{u}(t,u)\right)\\
	&=\omega\left(d I\left(\pderiv[\alpha]{t}(t,u)\right),d I\left(\pderiv[\alpha]{u}(t,u)\right)\right)\\
	&=d h_t\left(d I\left(\pderiv[\alpha]{u}(t,u)\right)\right)\\
	&=d\tilde{h}_t\left(\pderiv[\tilde{\alpha}]{u}(t,u)\right)\\
	&=d(h_0\circ i\circ P)\left(\pderiv[\tilde{\alpha}]{u}(t,u)\right)\\
	&=\pi^*\omega\left(X(\tilde{\alpha}_t(u)),\pderiv[\tilde{\alpha}]{u}(t,u)\right).
\end{align*}
Since both vectors $X(\tilde{\alpha}_t(u))$ and $\partial\tilde{\alpha}/\partial t(t,u)$ are tangent to $\tilde{F}$, whereas the vector $\partial\tilde{\alpha}/\partial u(t,u)$ is not, the above calculation verifies that
\begin{align*}
	\pderiv[\tilde{\alpha}]{t}(t,u)=X(\tilde{\alpha}_t(u)),
\end{align*}
as stated.\qedhere
\end{proof}

\begin{proof}[Proof of Theorem~\ref{initial}]
We show that for any smooth $O_n(\mathbb{R})$ invariant function $h_0:\Lambda_0\to\mathbb{R}$ there is a geodesic $\Lambda:[0,\epsilon]\to\pospathlag{n}$ for some $\epsilon>0$, such that
\begin{align*}
\Lambda(0)=\Lambda_0,\quad\left.\deriv{t}\right|_{t=0}\Lambda(t)=d h_0.
\end{align*}
The existence part of the theorem, as well as the promised equality of Remark~\ref{promisedeq}, then follows from Proposition~\ref{paths}.

Let $h_0:\Lambda_0\to\mathbb{R}$ be as stated above, let $\gamma_0$ be a symmetric circle with $\Lambda_{\gamma_0}^n=\Lambda_0$, and let $g$ be any Riemannian metric on $\tilde{M}^1$. For $u\in S^1$, we use the notation $l_\gamma(u)$ as above. For $\delta>0$, denote by $l_\gamma(u,\delta)$ the metric ball in  $l_\gamma(u)$ of radius $\delta$ around $\tilde{\gamma}_0(u)$. Take $\delta$ so small that the following hold:
\begin{enumerate}[label=(\alph*)]
\item The distance between the image of $\tilde{\gamma}_0$ and $H$, the set of singular points of $\tilde{F}$, is greater than $\delta$.
\item For $u\in S^1$ the ball $l_\gamma(u,\delta)$ is diffeomorphic to an interval.
\item For $u\neq u'\in S^1$ the intervals $l_\gamma(u,\delta),l_\gamma(u',\delta),$ are disjoint.
\end{enumerate}
Such a $\delta$ exists as $\tilde{\gamma}_0$ is transverse to all leaves of $\tilde{F}$. For the same reason the set
\begin{align*}
U=\bigcup_{u\in S^1}l_\gamma(u,\delta)
\end{align*}
is an open subset of $\tilde{M}^1$. Note that the projection $P:U\to\Lambda_{\gamma_0}^1$ that maps every point in $l_\gamma(u,\delta)$ to $\gamma_0(u)$ is a smooth submersion, and so $U$ is uniquely projected onto $\gamma_0$. By Proposition~\ref{gflow} \ref{gflowa} the function $h_0\circ i\circ P$ has a corresponding pseudo-Hamiltonian vector field $X$ along $U$. Let $x$ denote the flow of $X$. By compactness of $\tilde{\Lambda}_{\gamma_0}^1$ there is some $\epsilon>0$ such that for all $u\in S^1$ and $0\leq t\leq\epsilon$ we have $x_t(\tilde{\gamma}_0(u))\in U$. Thus we define the family $(\tilde{\gamma}_t:S^1\to\tilde{M}^1),\;0\leq t\leq\epsilon$, by $\tilde{\gamma}_t(u)=x_t(\tilde{\gamma}_0(u))$, and let $\gamma_t:=\pi\circ\tilde{\gamma}_t$ for all $t$. By Proposition~\ref{gflow} \ref{gflowb}, the desired geodesic is obtained by setting $\Lambda(t)=\Lambda_{\gamma_t}^n$.

We show uniqueness. Let $\Lambda':[0,\epsilon]\to\pospathlag{n}$ be another geodesic which satisfies the same initial value condition. It is enough to verify that for some $0<\epsilon'\leq\epsilon$ the equality $\Lambda'|_{[0,\epsilon']}=\Lambda|_{[0,\epsilon']}$ holds. Take a smooth family of circles $(\alpha_t)$ such that $\Lambda_{\alpha_t}^n=\Lambda'(t)$ for $t\in[0,\epsilon]$. By Lemma~\ref{horizontalcircles}, we may assume $\alpha_0=\gamma_0$, and that the family $(\alpha_t)$ is horizontal. By compactness, there exists $\epsilon'>0$ such that for $t\in[0,\epsilon']$, the image of $\tilde{\alpha}_t$ is contained in $U$. It follows from Proposition~\ref{gflow} \ref{gflowc} that $\alpha_t=\gamma_t$ for $t\in[0,\epsilon']$.
\end{proof}

\subsection{Cylindrical foliations}
\label{CF}

The following notion is helpful for solving the boundary value problem:
\begin{dfn}
Let $\gamma$ be a symmetric circle with $\Lambda_\gamma^n\in\pospathlag{n}$. We say that $\tilde{F}$ is \emph{cylindrical over $\gamma$} if
\begin{enumerate}[label=(\alph*)]
\item $l_\gamma(u)$ is diffeomorphic to $\mathbb{R}$ for all $u\in S^1$,
\item $l_\gamma(u)\neq l_\gamma(u')$ whenever $u\neq u'$.
\end{enumerate}
\end{dfn}

\begin{prop}
\label{solutionwhencylindrical}
Let $\mathcal{O}\subset\pospathlag{n}$ be a Hamiltonian isotopy class, let $\Lambda_0,\Lambda_1\in\mathcal{O}$, and let $\gamma_0$ be a symmetric circle with $\Lambda_{\gamma_0}^n=\Lambda_0$. If $\tilde{F}$ is cylindrical over $\gamma_0$, then there is a unique geodesic connecting $\Lambda_0$ and $\Lambda_1$.
\end{prop}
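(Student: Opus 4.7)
The strategy is to reduce the boundary value problem to an initial value problem solvable by Proposition~\ref{gflow}: I will produce an $O_n(\mathbb{R})$-invariant function $h_0$ on $\Lambda_0$ whose associated pseudo-Hamiltonian flow carries $\tilde\gamma_0$ to the strict transform of a symmetric circle $\gamma_1$ with $\Lambda_{\gamma_1}^n=\Lambda_1$ in unit time; cylindricity will be used both to make sense of this and to pin down $h_0$ uniquely.

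First I would produce a compatible target $\gamma_1$ and a suitable open set $U$. Since $\Lambda_0,\Lambda_1\in\mathcal O$, choose a smooth path in $\mathcal O$ connecting them and apply Lemma~\ref{horizontalcircles} to lift it to a horizontal family $(\beta_t)$ of symmetric circles with $\beta_0=\gamma_0$. Set $\gamma_1:=\beta_1$; horizontality gives $\tilde\gamma_1(u)\in l_{\gamma_0}(u)$ for every $u\in S^1$. Cylindricity then implies that $U:=\bigcup_{u\in S^1}l_{\gamma_0}(u)$ is an open subset of $\tilde M^1$ uniquely projected onto $\gamma_0$ via the map $P$ sending $l_{\gamma_0}(u)$ to $\gamma_0(u)$, and that $U$ contains the strict transform of $\gamma_1$.

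Next I would translate the time-$1$ endpoint condition into an ODE for the unknown $H(u):=h_0\circ i(\gamma_0(u))$. Working in local coordinates $(u,s)$ on $U$ in which the leaves of $\tilde F$ are the loci $\{u=\mathrm{const}\}$, with $\tilde\gamma_0(u)=(u,0)$ and $\tilde\gamma_1(u)=(u,s_1(u))$, we may write $\pi^*\omega=\psi(u,s)\,du\wedge ds$ for a smooth positive density $\psi$. Since $h_0\circ i\circ P$ depends only on $u$, Proposition~\ref{pHamiltonian}~\ref{phama} shows the pseudo-Hamiltonian vector field is $X=-\bigl(H'(u)/\psi(u,s)\bigr)\partial_s$, and separating variables in the flow ODE turns the time-$1$ condition into
\[
H'(u)\;=\;-\int_0^{s_1(u)}\psi(u,s)\,ds\;=:\;G(u).
\]
Any smooth $H:S^1\to\mathbb{R}$ satisfying this equation gives $h_0$ (through $h_0\circ i=H$) whose pseudo-Hamiltonian flow produces the desired geodesic by Proposition~\ref{gflow}~\ref{gflowb}.

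The main obstacle is verifying that a smooth, even (hence $O_1(\mathbb R)$-invariant, and so by Lemma~\ref{smoothonsphere} a legitimate $O_n(\mathbb R)$-invariant $h_0$) solution $H:S^1\to\mathbb R$ exists. Solvability on $S^1$ amounts to $\int_{S^1}G(u)\,du=0$, and I would prove this by reading the integral as a symplectic area (up to sign)
\[
\int_{S^1}\int_0^{s_1(u)}\psi(u,s)\,ds\,du\;=\;\pm\int_{\tilde\Sigma}\pi^*\omega,
\]
where $\tilde\Sigma$ is the cylinder parametrized by $(t,u)\mapsto\tilde\beta_t(u)$. Pushing forward via $\pi$ yields $\pm\int_\Sigma\omega$ for the image surface in $M^1$; because $(\Lambda_{\beta_t}^n)$ is an exact Lagrangian path (Proposition~\ref{paths}), the derivative $1$-form $\omega(\partial_t\beta_t,\cdot)$ is exact on each $\beta_t$, so its integral around $\beta_t$ vanishes and the double integral is zero. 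Evenness of $H$ follows from $O_1(\mathbb R)$-invariance of $\pi^*\omega$ and of the strict transforms $\tilde\gamma_0,\tilde\gamma_1$; smoothness of $s_1$ and $\psi$ across $u=0,\pi$ requires care with the polar blowup coordinates from Section~\ref{blowup}, using the symmetry of $\gamma_0,\gamma_1$ together with Lemma~\ref{composesqrt}. For uniqueness, any other geodesic from $\Lambda_0$ to $\Lambda_1$ lifts by Lemma~\ref{horizontalcircles} to a horizontal family $(\alpha_t)$ with $\alpha_0=\gamma_0$; horizontality and cylindricity force $\tilde\alpha_1(u)\in l_{\gamma_0}(u)$, the equality $\Lambda_{\alpha_1}^n=\Lambda_1$ forces $\tilde\alpha_1=\tilde\gamma_1$, the ODE above forces the initial velocity to match $dh_0$, and Proposition~\ref{gflow}~\ref{gflowc} concludes $(\Lambda_{\alpha_t}^n)=(\Lambda_{\gamma_t}^n)$.
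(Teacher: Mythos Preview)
Your plan is correct and shares the paper's overall arc---build $U$ from the cylindrical leaves, use Lemma~\ref{horizontalcircles} to produce a horizontal target $\tilde\beta_1$, manufacture an initial velocity $h_0$ whose pseudo-Hamiltonian flow hits $\tilde\beta_1$ at time~$1$, then invoke Proposition~\ref{gflow}---but the mechanism for producing $h_0$ differs. The paper introduces an auxiliary even function $k(u)=\cos u$; by Proposition~\ref{pHamiltonian}~\ref{phamb} its pseudo-Hamiltonian field $X$ is nowhere vanishing on $U$ (since $k''(0),k''(\pi)\neq 0$), so the flow of $X$ supplies a canonical smooth leaf coordinate. The required field is then $Y=(s\circ P)\cdot X$ with $s$ the $X$-flow-time from $\tilde\gamma_0$ to $\tilde\beta_1$, and one solves $dh=s\,dk$: since $s$ is even and $dk/du=-\sin u$ is odd, both periodicity and $O_1(\mathbb{R})$-invariance of $h$ come for free, with no separate area argument needed. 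Your route instead posits leaf coordinates $(u,s)$ directly and verifies the periodicity constraint $\int_{S^1}G=0$ via an exactness/symplectic-area argument (which, incidentally, already follows from the oddness of $G$ that you need for evenness of $H$). This is conceptually fine but shifts the burden to checking that your coordinates, $\psi$, and $s_1$ are globally smooth across $u=0,\pi$; the paper's $\cos u$ device absorbs that verification into Proposition~\ref{pHamiltonian}, which has already handled the exceptional-divisor analysis. The uniqueness arguments are essentially the same.
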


\begin{proof}
Set
\begin{align}
\label{capitalu}
U=\bigcup_{u\in S^1}l_{\gamma_0}(u),
\end{align}
and note that by assumption $U$ projects uniquely onto $\gamma_0$. By Lemma~\ref{horizontalcircles}, since $\Lambda_0,\Lambda_1$, are Hamiltonian isotopic in $\pospathlag{n}$, there is a smooth horizontal family of symmetric circles $(\beta_t),\;t\in[0,1]$, which satisfies
\begin{enumerate}[label=(\alph*)]
\item $\beta_0=\gamma_0$,
\item $\Lambda_{\beta_1}^n=\Lambda_1$.
\end{enumerate}
Note that for all $u\in S^1$ we have $\tilde{\beta}_1(u)\in l_{\gamma_0}(u)$, as $(\beta_t)$ is horizontal.

Define a function $k:\Lambda_{\gamma_0}^1\to\mathbb{R}$ by
\begin{align}
\label{smallk}
\gamma_0(u)\mapsto\cos(u),
\end{align}
and let $P:U\to\Lambda_{\gamma_0}^1$ as above. Proposition~\ref{pHamiltonian} implies that $k\circ P$ has a corresponding nonvanishing pseudo-Hamiltonian vector field $X$ along $U$. $X$ is invariant under the $O_1(\mathbb{R})$ action, since $k$ is.

Let $x$ denote the flow of $X$, and define $s:\Lambda_{\gamma_0}^1\to\mathbb{R}$ by
\begin{align}
\label{flowtime}
x_{s(\gamma_0(u))}(\tilde{\gamma}_0(u))=\tilde{\beta}_1(u).
\end{align}
Since $X$ does not vanish and both  $\tilde{\gamma}_0,\tilde{\beta}_1$, are transverse to the trajectories of $X$, it follows from the inverse function theorem that $s$ is well defined and smooth. Furthermore, since $X,\tilde{\gamma}_0$, and $\tilde{\beta}_1$ are invariant under the $O_1(\mathbb{R})$ action, so is $s$.

Define another vector field on $U$ by
\begin{align}
\label{hiY}
Y=(s\circ P)\cdot X,
\end{align}
denote by $y$ the flow of $Y$, and note that by definition of $s$
\begin{align*}
\forall u\in S^1\quad y_1(\tilde{\gamma}_0(u))=x_{s(\gamma_0(u))}(\tilde{\gamma}_0(u))=\tilde{\beta}_1(u).
\end{align*}
We show that there is a smooth $O_1(\mathbb{R})$ invariant $h:\Lambda_{\gamma_0}^1\to\mathbb{R}$ such that $Y$ is the pseudo-Hamiltonian vector field that corresponds to $h\circ P$. By direct calculation
\begin{align*}
i_Y\omega=(s\circ P)i_X\omega=(s\circ P)d(k\circ P)=P^*(sdk).
\end{align*}
Since both $s$ and $k$ are even functions of $u$, the function $s\cdot d k/d u$ is odd. Hence, there is $h:\Lambda_{\gamma_0}^1\to\mathbb{R}$ which is $O_1(\mathbb{R})$ invariant and satisfies
\begin{align}
\label{smallh}
d h=sdk.
\end{align}
Now
\begin{align*}
i_Y\omega=P^*d h=d(P^*h)=d(h\circ P),
\end{align*}
as desired.

Embed $\Lambda_{\gamma_0}^1$ in $\Lambda_{\gamma_0}^n=\Lambda_0$ by $I$, extend $h$ to $\hat{h}:\Lambda_0\to\mathbb{R}$ which is $O_n(\mathbb{R})$ invariant, and note that by Lemma \ref{smoothonsphere} $\hat{h}$ is smooth. Extend $\tilde{\gamma}_0$ to a smooth family $(\tilde{\gamma}_t:S^1\to \tilde{M}^1),\;t\in[0,1]$, by $\partial\tilde{\gamma}/\partial t=Y$, and by Proposition~\ref{gflow} \ref{gflowb} the induced path $(\Lambda_{\gamma_t}^n)$ is a geodesic. By construction, $\tilde{\gamma}_1=\tilde{\beta}_1$. Hence, $\Lambda_{\gamma_1}^n=\Lambda_{\beta_1}^n=\Lambda_1$, and existence of a geodesic is proved.

Let $\Lambda':[0,a]\to\pospathlag{n}$ be another geodesic with $\Lambda'(0)=\Lambda_0,\;\Lambda'(a)=\Lambda_1$. Reparametrizing if necessary, we may assume $a=1$. Let $\hat{h}':\Lambda_0\to\mathbb{R}$ satisfy
\begin{align*}
	\left.\deriv{t}\right|_{t=0}\Lambda'(t)=d\hat{h}'.
\end{align*}
By proposition~\ref{paths}, $\hat{h}'$ is $O_n(\mathbb{R})$ invariant, and by Proposition~\ref{gflow} \ref{gflowa}, the function $\hat{h}'\circ i\circ P:U\to\mathbb{R}$ has a corresponding pseudo-Hamiltonian vector field $W$. Let $w$ denote the flow of $W$. Let $(\alpha_t)$ be a family of smooth circles with $\Lambda_{\alpha_t}^n=\Lambda'(t)$ for $t\in[0,1]$. By Lemma~\ref{horizontalcircles}, we may assume $\alpha_0=\gamma_0$, and that $(\alpha_t)$ is horizontal. It follows that for every $u$ we have $\tilde{\alpha}_1(u)=\tilde{\gamma}_1(u)$.

By Proposition~\ref{gflow} \ref{gflowc}, for every $u\in S^1$ we have
\begin{align*}
w_1(\tilde{\alpha}_0(u))=\tilde{\alpha}_1(u),
\end{align*}
which can be written as
\begin{align}
	\label{agree1}
	w_1(\tilde{\alpha}_0(u))=y_1(\tilde{\alpha}_0(u)).
\end{align}

Set $h'=\hat{h}'\circ I:\Lambda_{\gamma_0}^1\to\mathbb{R}$. Define a function $r:\Lambda_{\gamma_0}^1\to\mathbb{R}$ by
\begin{align*}
	d h'=r\cdot d k,
\end{align*}
and note that
\begin{gather*}
	i_W\pi^*\omega=d(P^*h')=P^*(d h')=P^*(r\cdot d k)=r\circ P\cdot d(k\circ P)\\=r\circ P\cdot i_X\pi^*\omega,
\end{gather*}
which yields
\begin{align}
	\label{hiW}
	W=r\circ P\cdot X.
\end{align}
Putting \eqref{hiY}, \eqref{hiW} and \eqref{agree1} together, for every $u$ we have
\begin{align}
	\label{agree2}
	x_{r(\alpha_0(u))}(\tilde{\alpha}_0(u))=x_{s(\alpha_0(u))}(\tilde{\alpha}_0(u)).
\end{align}

For every $u$, the map
\begin{align*}
	\varphi_u:[0,1]\to U,\quad t\mapsto x_t(\tilde{\alpha}_0(u)),
\end{align*}
is injective since $\tilde{F}$ is cylindrical over $\alpha_0$ and $X$ is nonvanishing. Hence, it follows from~\eqref{agree2} that $r=s$, which implies $W=Y$. The geodesic $\Lambda'$ thus coincides with the above geodesic $(\Lambda_{\gamma_t}^n)$.
\end{proof}

\begin{prop}
\label{cylindrical}
If $n\geq2$, then $\tilde{F}$ is cylindrical over any symmetric circle $\gamma$ whose corresponding Lagrangian $\Lambda_\gamma^n$ is positive.
\end{prop}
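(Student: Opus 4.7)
The plan is to verify the two defining properties of cylindricality of $\tilde F$ over $\gamma$: that each leaf $l_\gamma(u)$ is diffeomorphic to $\mathbb{R}$, and that distinct $u, u'\in S^1$ give distinct leaves. The foundational observation is that $\tilde F$ is the vertical foliation of the holomorphic $1$-form $\alpha=z^{n-2}d\zeta$ on $M^1$; for $n\geq 2$ this is genuinely holomorphic, with zeros of order $n-1$ at the branch points where $z=0$, and the construction in the proof of Proposition~\ref{foliation} matches the standard blowup of a translation-surface singularity. On simply connected open subsets of $M^1\setminus\{z=0\}$, $\alpha$ admits a primitive $G$, and leaves of $F$ are locally level sets of $\operatorname{Re} G$.

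First I would dispose of the endpoint leaves $l_\gamma(0)$ and $l_\gamma(\pi)$. The points $\tilde\gamma(0),\tilde\gamma(\pi)$ lie on exceptional divisors $E_j\cong\mathbb{R}P^1$, and since $\tilde F$ is tangent to the exceptional divisors (Proposition~\ref{foliation}) while $|Z_j|=n\geq 2$, each of these leaves is a component of $E_j\setminus Z_j$, an arc diffeomorphic to $\mathbb{R}$. They lie on different divisors so they are distinct from each other, and distinct from all $l_\gamma(u)$ with $u\neq 0,\pi$ (which are not contained in any exceptional divisor). For $u\in(0,\pi)\cup(\pi,2\pi)$, Lemma~\ref{positive} gives $\operatorname{Re}\alpha(\gamma'(u))\neq 0$, so the function $u\mapsto\operatorname{Re} G(\tilde\gamma(u))$ (interpreted locally or on a covering space to absorb the period of $\operatorname{Re}\alpha$ around $\tilde\gamma$) is strictly monotone on each arc $(0,\pi)$ and $(\pi,2\pi)$ with nondegenerate extrema at $u=0,\pi$. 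Combined with the symmetry $\gamma(-u)=\sigma(\gamma(u))$, where $\sigma:(z,\zeta)\mapsto(-z,\zeta)$ is the deck transformation of $\pi_\zeta:M^1\to\mathbb{C}$, this reduces the distinctness question to showing $l_\gamma(u)\neq l_\gamma(-u)$ for $u\in(0,\pi)$.

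The decisive step is excluding $l_\gamma(u)=l_\gamma(-u)$. The points $\gamma(u)$ and $\gamma(-u)$ project to the same $\zeta(u)\in\mathbb{C}$ but lie on opposite sheets of the double cover $\pi_\zeta$. A leaf of $F$ in $M^1\setminus\{z=0\}$ containing both would have connected image under $\pi_\zeta$, forcing the projected leaf to be a closed loop in $\mathbb{C}\setminus\{f=0\}$ of odd winding about some subset of the zeros of $f$, since the preimage of any contractible arc is automatically disconnected. I would rule this out by analyzing the descended vertical foliation of the quadratic differential $f(\zeta)^{n-2}d\zeta^2$ on $\mathbb{C}\setminus\{f=0\}$: the pole at infinity of $f^{n-2}d\zeta^2$ and the positivity hypothesis on $\Lambda_\gamma^n$, applied along $c=\pi_\zeta\circ\gamma$ (whose endpoints are branch points), preclude closed trajectories passing through $c$. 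Once distinctness is proven, condition (a) follows automatically: a closed leaf $l_\gamma(u)\cong S^1$ would meet the transverse circle $\tilde\gamma$ in an even, hence multi-point, intersection, producing $u'\neq u$ with $l_\gamma(u)=l_\gamma(u')$ and contradicting (b). The main obstacle is the translation-surface analysis in this last step, which requires careful global control over the descended vertical foliation on the noncompact $\mathbb{C}$.
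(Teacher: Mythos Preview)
Your framework is the same as the paper's: both view $\tilde F$ as the vertical foliation of the holomorphic form $\alpha=z^{n-2}\,d\zeta$ on $M^1$, descend to the quadratic differential $f(\zeta)^{n-2}\,d\zeta^2$ on $\mathbb{C}$, and analyse the projected foliation $\underline F$ there. Your treatment of the exceptional-divisor leaves $l_\gamma(0),l_\gamma(\pi)$ is fine.

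The real gap is your reduction to the case $u'=-u$. Monotonicity of $u\mapsto\operatorname{Re}\int_0^u\alpha(\gamma')$ on $(0,\pi)$ does \emph{not} rule out $l_\gamma(u)=l_\gamma(u')$ for distinct $u,u'\in(0,\pi)$. Leaves of $F$ are only \emph{locally} level sets of $\operatorname{Re} G$; for $n\geq3$ the form $\alpha$ has nonzero periods on $M^1$ (which has $H_1\cong\mathbb{Z}^m$), so two points on the same leaf can carry different values of the primitive along any chosen branch. All your monotonicity yields is that the loop built from the leaf segment and $\gamma|_{[u,u']}$ is homologically nontrivial---not a contradiction. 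Concretely, after projecting by $\zeta$, the case $u,u'\in(0,\pi)$ with $u\neq u'$ becomes: a (non-closed) leaf of $\underline F$ meets the arc $\underline\gamma$ at two distinct interior points. Your ``decisive step'' addresses only \emph{closed} trajectories of $\underline F$ (the $u'=-u$ scenario); the open-leaf-hitting-$\underline\gamma$-twice case is simply not covered.

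The paper does not attempt your reduction. After ruling out closed leaves of $\underline F$ by a winding-number count, it treats the remaining case directly: if a leaf $\underline l$ meets $\underline\gamma$ at interior points $p\neq p'$, the concatenation of the leaf arc with the $\underline\gamma$-arc is a simple loop. One first shows this loop must enclose a zero of $f$ (otherwise $\oint z^{n-2}\,d\zeta=0$ by Cauchy, yet the integrand is purely imaginary along $\underline l$ and has nonvanishing real part along $\underline\gamma$ by positivity). Then a turning-tangent computation compares the change in $\arg z^{n-2}$ along each arc with the turning of the tangent direction, producing incompatible bounds on the winding number of $z$. This mixed-loop winding argument is exactly what your proposal lacks.

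A smaller gap: your derivation of (a) from (b) asserts that a closed leaf meets the transverse circle $\tilde\gamma$ in an even number of points, but the mod-$2$ intersection $[\tilde\gamma]\cdot[l]$ in $H_1(\tilde M^1;\mathbb{Z}/2)$ has no a priori reason to vanish.
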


\begin{proof}[Proof of Theorem~\ref{boundary}]
Follows immediately from Proposition~\ref{solutionwhencylindrical} and Proposition~\ref{cylindrical}.
\end{proof}

\begin{proof}[Proof of Proposition~\ref{cylindrical}]
By Proposition~\ref{foliation}, along an exceptional divisor $E$ the foliation $\tilde{F}$ is tangent to $E$, and has $n$ singular points. Furthermore, for every symmetric circle $\gamma$, the strict transform $\tilde{\gamma}$ intersects any exceptional divisor at most once. Thus it suffices to focus on leaves of $\tilde{F}$ outside the exceptional divisors in $\tilde{M}^1$, or in other words, study the induced foliation $F$ of $M^1\setminus\{z=0\}$. The projection
\begin{align*}
	\rho:M^1\setminus\{z=0\}\to\mathbb{C}\setminus\{f=0\},\quad(z,\zeta)\mapsto\zeta,
\end{align*}
is a smooth covering map. Since the foliation $F$ is invariant under the  nontrivial deck transformation, it induces a well defined foliation of $\mathbb{C}\setminus\{f=0\}$, denoted by $\underline{F}$.

For a symmetric circle $\gamma$, denote by $\underline{\gamma}$ its image in $\mathbb{C}$ under $(z,\zeta)\mapsto\zeta$. Note that if $\tilde{\gamma}$ is transverse to $\tilde{F}$, then $\underline{\gamma}$ is transverse to $\underline{F}$. If $\tilde{F}$ has a closed leaf, then so does $\underline{F}$. If a leaf of $\tilde{F}$ intersects the strict transform $\tilde{\gamma}$ at two different points, then one of the following holds:
\begin{enumerate}[label=(\alph*)]
	\item The foliation $\underline{F}$ has a closed leaf.
	\item There is a leaf of $\underline{F}$ which intersects $\underline{\gamma}$ at two different points.
\end{enumerate}
Hence, it is enough to show that the foliation $\underline{F}$ has no closed leaves, and that for every symmetric circle $\gamma$ with $\Lambda_\gamma^n$ positive, no leaf of $\underline{F}$ intersects $\underline{\gamma}$ at two different points.

Consider first the case $n=2$. As follows directly from the construction, the leaves of $\underline{F}$ in this case are straight lines, parallel to the imaginary axis, and the proposition holds.

Now let $n\geq3$. The following arguments are similar to those used by Shapere-Vafa in \cite{shapere-vafa}. Assume that $\underline{F}$ has a closed leaf, parametrized by $\zeta:[0,1]\to\mathbb{C}$. By the theorem of the turning tangents, and reversing orientation if necessary, we assume $\zeta$ has turning number $+1$. Let $z:[0,1]\to\mathbb{C}$ be continuous and satisfy $z^2=f(\zeta)$, and denote by $w$ the winding number of $z^{n-2}$ (which is a well defined half integer). Since
\begin{align}
\label{zeroinside}
z=\mathrm{constant}\cdot\prod_j\sqrt{\zeta-\zeta_j},
\end{align}
where the $\zeta_j$s are the roots of $f$, and $\zeta$ is parametrized counterclockwise, $w$ is nonnegative. On the other hand, as follows from the construction of the foliation, the argument of $(z(x))^{n-2}\cdot\zeta'(x)$ is constant along $[0,1]$. Since the winding number of $\zeta'$ is equal to $+1$, it follows that $w=-1$, which leads to a contradiction, and so $\underline{F}$ has no closed leaves.

Let $\gamma$ be a symmetric circle with $\Lambda_\gamma^n$ positive, and assume there are two distinct points $p,p',$ on $\underline{\gamma}$ which share the same leaf of $\underline{F}$, denoted by $\underline{l}$. We may assume that $\underline{l}$ does not intersect $\underline{\gamma}$ at any point between $p$ and $p'$, so $\underline{\gamma}$ and $\underline{l}$ form a simple loop which is smooth anywhere except $p$ and $p'$. Parametrize this loop by $\alpha:[0,1]\to\mathbb{C}$, that goes from $p$ to $p'$ along $\underline{\gamma}$ and back to $p$ along $\underline{l}$. Switching the two points if needed, we also assume that $\alpha$ has turning number $+1$.

We claim that there is a zero of $f$ inside $\alpha$. Otherwise, its image is contained in a simply connected domain $D$, in which $f$ has a holomorphic square root, $z$. It follows from holomorphicity and simply connectedness that
\begin{align*}
\int_\alpha z^{n-2}d\zeta=0.
\end{align*}
But along $\underline{l}$ the product $(z(x))^{n-2}\cdot\alpha'(x)$ is pure imaginary,  whereas along $\underline{\gamma}$ it has a nonvanishing real part. The last equation is thus impossible, and indeed, there is at least one zero of $f$ inside $\alpha$.

Once again, let $z:[0,1]\to\mathbb{C}$ be continuous and satisfy $z^2=f(\alpha)$. Let $\delta_\gamma$ and $\delta_l$ denote the changes in the argument of $z^{n-2}$ along $\underline{\gamma}$ and $\underline{l}$ respectively. Similarly, let $\Delta_\gamma$ and $\Delta_l$ denote the changes in the argument of $\alpha'$ along $\underline{\gamma}$ and $\underline{l}$ respectively. The total change in the argument of $\alpha'$, including the exterior angles at the two singular points, is $+2\pi$ by the theorem of turning tangents. Since each singular point contributes less than $\pi$, we have
\begin{align}
\label{tchangealpha}
\Delta_\gamma+\Delta_l>0.
\end{align}
Since the argument of $(z(x))^{n-2}\cdot\alpha'(x)$ is constant along $\underline{l}$, we have
\begin{align}
\label{zalphaleaf}
\delta_l=-\Delta_l.
\end{align}
Since $\Lambda_\gamma^n$ is positive, along $\underline{\gamma}$ we have
\begin{align*}
-\frac{\pi}{2}<\arg((z(x))^{n-2}\cdot\alpha'(x))<\frac{\pi}{2}.
\end{align*}
Hence,
\begin{align}
\label{zalphagamma}
\delta_\gamma<-\Delta_\gamma+\pi.
\end{align}
Putting \eqref{tchangealpha},\eqref{zalphaleaf} and \eqref{zalphagamma} together yields
\begin{align*}
\delta_\gamma+\delta_l<\pi,
\end{align*}
which means that the winding number of $z$ is $<1/2$. But since there is  a zero of $f$ inside $\alpha$, \eqref{zeroinside} implies that $z$ has a winding number $\geq1/2$, which leads to a contradiction.
\end{proof}

\subsection{The metric on a Hamiltonian isotopy class}
\label{metric}
Let $n\geq2$, and let $\mathcal{O}\subset\pospathlag{n}$ be a Hamiltonian isotopy class. For $\Lambda\in\mathcal{O}$, think of the tangent space $T_\Lambda\pathlag{n}$ as the set
\[
\mathcal{H}_\Lambda=\left\{h\in C^\infty(\Lambda)\left|h\;\mathrm{is}\;O_n(\mathbb{R})\;\mathrm{invariant,}\int_{\Lambda}h\real\Omega=0\right.\right\},
\]
and recall the Riemannian metric
\[
	\Upsilon(h,k)=\int_\Lambda hk\real\Omega.
\]
Note that $(\mathcal{H}_\Lambda,\Upsilon)$ is isometric to a linear subspace of $C^\infty([0,1])$ with the $L^2$ metric.
\begin{lemma}
	\label{natdiff}
	Horizontal lifts give rise to a family of diffeomorphisms
	\[
	(\varphi_{\Lambda_0,\Lambda_1}:\Lambda_0\to\Lambda_1)_{\Lambda_0,\Lambda_1\in\mathcal{O}},
	\]
	which are all $O_n(\mathbb{R})$ equivariant and $\real\Omega$ preserving. This family is natural in the sense that for $\Lambda_0,\Lambda_1,\Lambda_2\in\mathcal{O}$ we have
	\[
	\varphi_{\Lambda_0,\Lambda_2}=\varphi_{\Lambda_1,\Lambda_2}\circ\varphi_{\Lambda_0,\Lambda_1}.
	\]
\end{lemma}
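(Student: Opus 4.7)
The plan is to build $\varphi_{\Lambda_0,\Lambda_1}$ from the horizontal extension of symmetric circles provided by Lemma~\ref{horizontalcircles}, and to use cylindricality of $\tilde{F}$ (Proposition~\ref{cylindrical}) to eliminate every choice from the construction. Concretely, given $\Lambda_0,\Lambda_1\in\mathcal{O}$, I would fix a symmetric circle $\gamma_0$ with $\Lambda^n_{\gamma_0}=\Lambda_0$, pick any smooth path $\Lambda:[0,1]\to\mathcal{O}$ from $\Lambda_0$ to $\Lambda_1$, and invoke Lemma~\ref{horizontalcircles} to extend $\gamma_0$ to a horizontal family $(\gamma_t)$ with $\Lambda^n_{\gamma_t}=\Lambda(t)$. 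Writing $\gamma_1$ for the time-$1$ member and $\Psi_0,\Psi_1:S^n\to\Lambda_0,\Lambda_1$ for the diffeomorphisms induced by $\gamma_0,\gamma_1$ via Proposition~\ref{spherediffeo}, I would set $\varphi_{\Lambda_0,\Lambda_1}:=\Psi_1\circ\Psi_0^{-1}$.

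The properties at a fixed pair of choices are straightforward. $O_n(\mathbb{R})$-equivariance is immediate, since both $\Psi_0$ and $\Psi_1$ are $O_n(\mathbb{R})$-equivariant by Proposition~\ref{spherediffeo}. For $\real\Omega$-preservation, let $\bar{\Psi}:[0,1]\times S^n\to M^n$ be the horizontal lift of $(\Lambda^n_{\gamma_t})$ induced by $(\gamma_t)$ as in Remark~\ref{fields}, and let $v_t$ be its deformation vector field, so that $i_{v_t}\real\Omega$ vanishes along $\Lambda^n_{\gamma_t}$. Since $\Omega$ is holomorphic we have $d\real\Omega=0$, and Cartan's formula gives
\[
\deriv{t}\bar{\Psi}_t^{\,*}\real\Omega \;=\; \bar{\Psi}_t^{\,*}\bigl(d\,i_{v_t}\real\Omega + i_{v_t}d\real\Omega\bigr) \;=\; 0,
\]
so $\bar{\Psi}_t^{\,*}\real\Omega$ is $t$-independent and $\varphi_{\Lambda_0,\Lambda_1}^{\,*}(\real\Omega|_{\Lambda_1})=\real\Omega|_{\Lambda_0}$.

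The main obstacle is showing that $\varphi_{\Lambda_0,\Lambda_1}$ does not depend on $\gamma_0$, on the path $\Lambda$, or on the horizontal extension; this is where cylindricality (Proposition~\ref{cylindrical}, and hence the hypothesis $n\geq 2$ carried over from the surrounding context) is essential. Horizontality of $(\gamma_t)$ forces $\tilde\gamma_1(u)\in l_{\gamma_0}(u)$ for every $u\in S^1$, and by cylindricality each leaf $l_{\gamma_0}(u)$ meets the image of $\tilde\gamma_1$ in at most one point; since that image is the strict transform of the matching curve of $\Lambda_1$, this pins down $\gamma_1$ uniquely (and in particular makes the choice of path irrelevant). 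The dependence on $\gamma_0$ disappears similarly: any other choice has the form $\gamma_0'=\gamma_0\circ\sigma$ for an $O_1(\mathbb{R})$-equivariant $\sigma:S^1\to S^1$, which transports leaves compatibly and yields $\gamma_1'=\gamma_1\circ\sigma$ together with $\Psi_i'=\Psi_i\circ\hat\sigma$ for the corresponding $O_n(\mathbb{R})$-equivariant $\hat\sigma:S^n\to S^n$, so that $\Psi_1'\circ(\Psi_0')^{-1}=\Psi_1\circ\Psi_0^{-1}$.

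Naturality will then follow by concatenating horizontal extensions. Given $\Lambda_0,\Lambda_1,\Lambda_2\in\mathcal{O}$, extend $\gamma_0$ horizontally to $\gamma_1$ over $\Lambda_1$ and further extend $\gamma_1$ horizontally to some $\gamma_2'$ over $\Lambda_2$. Because $\tilde\gamma_1(u)\in l_{\gamma_0}(u)$ we have $l_{\gamma_1}(u)=l_{\gamma_0}(u)$, and hence $\tilde\gamma_2'(u)\in l_{\gamma_0}(u)$; cylindricality then forces $\gamma_2'$ to coincide with the $\gamma_2$ obtained by direct horizontal extension of $\gamma_0$ to $\Lambda_2$. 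Consequently
\[
\varphi_{\Lambda_1,\Lambda_2}\circ\varphi_{\Lambda_0,\Lambda_1} \;=\; \Psi_2\circ\Psi_1^{-1}\circ\Psi_1\circ\Psi_0^{-1} \;=\; \Psi_2\circ\Psi_0^{-1} \;=\; \varphi_{\Lambda_0,\Lambda_2},
\]
completing the proof. Once path-independence has been established via cylindricality, the other assertions are essentially formal consequences of the construction.
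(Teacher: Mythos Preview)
Your proof is correct and follows essentially the same approach as the paper: define $\varphi_{\Lambda_0,\Lambda_1}=\Psi_1\circ\Psi_0^{-1}$ from a horizontal family of symmetric circles, and use cylindricality of $\tilde F$ (Proposition~\ref{cylindrical}) to remove dependence on choices. The only stylistic difference is that the paper packages the independence and naturality arguments into a single intrinsic characterization---$\varphi_{\Lambda_0,\Lambda_1}|_{\Lambda_0\cap M^1}$ sends each point with $z\neq 0$ to its unique leaf-mate in $\Lambda_1\cap M^1$, and $O_n(\mathbb{R})$-equivariance does the rest---whereas you work at the level of the parametrizations $\gamma_i$ and handle the reparametrization of $\gamma_0$ separately; your explicit Cartan computation for $\real\Omega$-preservation is likewise what underlies the paper's one-line citation.
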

\begin{proof}
	Let $\Lambda_0,\Lambda_1\in\mathcal{O}$, and let $(\gamma_t),\;t\in[0,1],$ be a smooth horizontal family of symmetric circles with $\Lambda_{\gamma_0}^n=\Lambda_0,\Lambda_{\gamma_1}^n=\Lambda_1$. Let $\Psi$ denote the lift of the Lagrangian path $(\Lambda_{\gamma_t}^n)$ induced by $(\gamma_t)$ as in Remark~\ref{fields}, and set
	\[
	\varphi_{\Lambda_0,\Lambda_1}=\Psi_1\circ\Psi_0^{-1}.
	\]
	The diffeomorphism $\varphi_{\Lambda_0,\Lambda_1}$ is $O_n(\mathbb{R})$ equivariant as $\Psi_0$ and $\Psi_1$ are. It is $\real\Omega$ preserving since $\Psi$ is a horizontal lift, by Proposition~\ref{foliation}.
	
	We show that $\varphi_{\Lambda_0,\Lambda_1}$ is independent of the horizontal family $(\gamma_t)$. For that, think of $M^1$ as a submanifold of $M^n$ via the embedding $I$ of Remark~\ref{capitali}. By Proposition~\ref{cylindrical}, the foliation $\tilde{F}$ is cylindrical. The restricted map
	$
	\varphi_{\Lambda_0,\Lambda_1}|_{\Lambda_0\cap M^1}
	$
	carries a point $p=(z,\zeta),z\neq0$, to the unique point in $\Lambda_1\cap M^1$ which shares a leaf of the foliation $F$ with $p$. Hence, $\varphi_{\Lambda_0,\Lambda_1}|_{\Lambda_0\cap M^1}$ is independent of $(\gamma_t)$. By $O_n(\mathbb{R})$ equivariance, so is $\varphi_{\Lambda_0,\Lambda_1}$. The same argument verifies that the family $(\varphi_{\Lambda_0,\Lambda_1})$ is natural.
\end{proof}
\begin{rem}
	\label{natdiff2}
	The family $(\varphi_{\Lambda_0,\Lambda_1})$ of Lemma~\ref{natdiff} induces a family $(\varphi_{\Lambda_0,\Lambda_1}^*:\mathcal{H}_{\Lambda_1}\to\mathcal{H}_{\Lambda_0})$ by
	\[
	\varphi_{\Lambda_0,\Lambda_1}^*(h)=h\circ\varphi_{\Lambda_0,\Lambda_1},\quad h\in\mathcal{H}_{\Lambda_1}.
	\]
	For any $\Lambda_0,\Lambda_1$, the map $\varphi_{\Lambda_0,\Lambda_1}^*$ is a well defined isometry, since $\varphi_{\Lambda_0,\Lambda_1}$ is $O_n(\mathbb{R})$ equivariant and $\real\Omega$ preserving. Hence, all tangent spaces $\{\mathcal{H}_\Lambda|\Lambda\in\mathcal{O}\}$ are naturally isometric to one another.
\end{rem}

By Theorem~\ref{initial}, every function in $\mathcal{H}_\Lambda$ is the derivative of a unique geodesic starting at $\Lambda$. Let $A_\Lambda\subset\mathcal{H}_\Lambda$ consist of all functions whose corresponding geodesics are of length $>1$, and define the exponential map
\[
\exp_{\Lambda}:A_\Lambda\to\mathcal{O}
\]
in the usual manner. By Theorem~\ref{boundary}, $\exp_\Lambda$ is a bijection.

\begin{lemma}
	\label{triangle}
	Let $\Lambda_0,\Lambda_1,\Lambda_2\in\mathcal{O}$. Let $\hat{h}_{0,1},\hat{h}_{0,2}\in A_{\Lambda_0},\hat{h}_{1,2}\in A_{\Lambda_1}$, satisfy
	\[
	\exp_{\Lambda_0}(\hat{h}_{0,1})=\Lambda_1,\quad\exp_{\Lambda_0}(\hat{h}_{0,2})=\Lambda_2,\quad\exp_{\Lambda_1}(\hat{h}_{1,2})=\Lambda_2.
	\]
	Then we have
	\[
	\hat{h}_{0,2}=\hat{h}_{0,1}+\varphi_{\Lambda_0,\Lambda_1}^*(\hat{h}_{1,2}).
	\]
\end{lemma}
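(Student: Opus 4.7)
The plan is to reduce the identity to a one-dimensional additivity statement by describing all three geodesics as pseudo-Hamiltonian flows on a single open subset of $\tilde{M}^1$, exploiting the fact that cylindricality of $\tilde F$ (Proposition~\ref{cylindrical}) makes each leaf a copy of $\mathbb{R}$. First I would pick a symmetric circle $\gamma_0$ for $\Lambda_0$ and, using Lemma~\ref{horizontalcircles}, extend it by horizontal families to symmetric circles $\gamma_1,\gamma_2$ for $\Lambda_1,\Lambda_2$. On the open set $U := \bigcup_{u\in S^1} l_{\gamma_0}(u)$, which by cylindricality is uniquely projected onto each of $\gamma_0$ and $\gamma_1$, the three strict transforms $\tilde\gamma_j(u)$, $j=0,1,2$, all lie on the common leaf $l_{\gamma_0}(u)$.

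Following Proposition~\ref{solutionwhencylindrical}, let $k(\gamma_0(u)) = \cos u$ and let $X$ be the nowhere vanishing pseudo-Hamiltonian vector field of $k\circ P$ on $U$. For any smooth $O_n(\mathbb{R})$-invariant function $h$ on $\Lambda_0$, define $r_h$ on $\Lambda_{\gamma_0}^1$ by $d(h\circ i) = r_h\cdot dk$; then the pseudo-Hamiltonian vector field of $h\circ i\circ P$ is $(r_h\circ P)\cdot X$, and because $P$ is constant on each leaf, its time-$1$ flow from a point of $l_{\gamma_0}(u)$ coincides with the time-$r_h(\gamma_0(u))$ flow of $X$. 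Writing $\xi(u,t) := x_t(\tilde\gamma_0(u))$ with $x$ the flow of $X$, Proposition~\ref{gflow} \ref{gflowb} then produces $\tilde\gamma_1(u) = \xi(u, r_{\hat h_{0,1}}(\gamma_0(u)))$ and $\tilde\gamma_2(u) = \xi(u, r_{\hat h_{0,2}}(\gamma_0(u)))$.

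The main technical step is to express the $\hat h_{1,2}$-geodesic in terms of the same $X$. By Proposition~\ref{gflow} \ref{gflowb} applied at $\Lambda_1$ with the symmetric circle $\gamma_1$, this geodesic is generated by the pseudo-Hamiltonian vector field of $\hat h_{1,2}\circ i_1\circ P_1$, where $i_1,P_1$ use $\gamma_1$ in place of $\gamma_0$. The key observation is that on $U$ this function equals $\varphi_{\Lambda_0,\Lambda_1}^*(\hat h_{1,2})\circ i\circ P$: indeed, $\varphi_{\Lambda_0,\Lambda_1}$ is induced by a horizontal lift (Lemma~\ref{natdiff}) and therefore sends $I(\gamma_0(u))$ to $I(\gamma_1(u))$, while $P$ and $P_1$ send any point of the common leaf $l_{\gamma_0}(u)=l_{\gamma_1}(u)$ to $\gamma_0(u)$ and $\gamma_1(u)$ respectively. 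Consequently the $\hat h_{1,2}$-geodesic is generated by $(r_{\varphi^*(\hat h_{1,2})}\circ P)\cdot X$, and flowing from $\tilde\gamma_1(u)$ for time $1$ lands at $\xi\bigl(u, r_{\hat h_{0,1}}(\gamma_0(u)) + r_{\varphi^*(\hat h_{1,2})}(\gamma_0(u))\bigr)$.

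By naturality in Lemma~\ref{natdiff}, this endpoint agrees with $\tilde\gamma_2(u)$: both the direct $\hat h_{0,2}$-geodesic and the concatenation yield horizontal families realizing $\varphi_{\Lambda_0,\Lambda_2} = \varphi_{\Lambda_1,\Lambda_2}\circ\varphi_{\Lambda_0,\Lambda_1}$. Since each leaf is a copy of $\mathbb{R}$ and $X$ never vanishes, $t\mapsto\xi(u,t)$ is injective, so $r_{\hat h_{0,2}} = r_{\hat h_{0,1}} + r_{\varphi^*(\hat h_{1,2})}$ on $\Lambda_{\gamma_0}^1$. Multiplying by $dk$ gives the corresponding identity of $1$-forms on $\Lambda_{\gamma_0}^1$; since every $O_n(\mathbb{R})$ orbit in $\Lambda_0$ meets $i(\Lambda_{\gamma_0}^1)$, the two $O_n(\mathbb{R})$-invariant functions $\hat h_{0,2}$ and $\hat h_{0,1} + \varphi_{\Lambda_0,\Lambda_1}^*(\hat h_{1,2})$ on $\Lambda_0$ differ by a constant, and since both lie in $\mathcal{H}_{\Lambda_0}$ — using that $\varphi_{\Lambda_0,\Lambda_1}$ preserves $\real\Omega$ — the constant is zero.
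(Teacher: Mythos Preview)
Your proof is correct and follows essentially the same approach as the paper: reduce to the additivity of flow times along leaves of $\tilde F$ using the pseudo-Hamiltonian vector field $X$ of $k\circ P$, then pass from the identity of $1$-forms on $\Lambda_{\gamma_0}^1$ to the identity of $O_n(\mathbb{R})$-invariant functions using the mean-zero normalization. The only cosmetic difference is that the paper starts from the flow-time functions $s_{i,j}$ and derives $h_{i,j}$ via $dh_{i,j}=s_{i,j}\,dk$, whereas you start from $\hat h_{i,j}$ and define $r_{\hat h}$; your identification of the $\hat h_{1,2}$-vector field with $(r_{\varphi^*(\hat h_{1,2})}\circ P)\cdot X$ is exactly the paper's observation that $k_0=\psi_{0,1}^*k_1$ (so the two projections give the same function $k\circ P$ on $U$).
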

\begin{proof}
	Let $\gamma_0,\gamma_1,$ be symmetric circles such that $\Lambda_{\gamma_i}^n=\Lambda_i$, and $l_{\gamma_0}(u)=l_{\gamma_1}(u)$ for $u\in S^1$. Let $\psi_{0,1}:\Lambda_{\gamma_0}^1\to\Lambda_{\gamma_1}^1$ be given by
	\[
	\psi_{0,1}=\gamma_1\circ\gamma_0^{-1}.
	\]
	The functions $\hat{h}_{0,1},\hat{h}_{0,2},\hat{h}_{1,2},$ can be obtained as in the proof of Proposition~\ref{solutionwhencylindrical}. We follow the steps in the proof, and show that the desired equality holds.
	
	The open subset $U\subset\tilde{M}^1$, defined in~\eqref{capitalu}, is the same for $\gamma_0,\gamma_1$. Define two functions
	\[
	k_0:\Lambda_{\gamma_0}^1\to\mathbb{R},\quad k_1:\Lambda_{\gamma_1}^1\to\mathbb{R},
	\]
	as in \eqref{smallk}. Since $l_{\gamma_0}(u)=l_{\gamma_1}(u)$ for every $u$, we have
	\[
	k_0=k_1\circ\psi_{0,1}=\psi_{0,1}^*k_1.
	\]
	Define functions
	\[
	s_{0,1},s_{0,2}:\Lambda_{\gamma_0}^1\to\mathbb{R},\quad s_{1,2}:\Lambda_{\gamma_1}^1\to\mathbb{R},
	\]
	as in \eqref{flowtime}, such that the function $s_{i,j}$ corresponds to a geodesic connecting $\Lambda_i$ with $\Lambda_j$. The main observation of the present proof is that by construction we have
	\[
	s_{0,2}=s_{0,1}+\psi_{0,1}^*s_{1,2}.
	\]
	Define the functions
	\[
	h_{0,1},h_{0,2}:\Lambda_{\gamma_0}^1\to\mathbb{R},\quad h_{1,2}:\Lambda_{\gamma_1}^1\to\mathbb{R},
	\]
	as in \eqref{smallh}. We have
	\begin{align*}
	d h_{0,2}&=s_{0,2}d k_0\\&=s_{0,1}d k_0+\psi_{0,1}^*(s_{1,2}d k_1)\\&=d h_{0,1}+\psi_{0,1}^*d h_{1,2}\\&=d(h_{0,1}+\psi_{0,1}^*h_{1,2}).
	\end{align*}
	The functions $\hat{h}_{i,j}$ are just $O_n(\mathbb{R})$ invariant extensions of $h_{i,j}$ to $\Lambda_i$. It follows that the desired equality holds up to some constant. Since all functions have mean value $0$ with respect to $\real\Omega$, this constant vanishes.
\end{proof}

\begin{proof}[Proof of Theorem \ref{metricspace}]
	Let $\Lambda_0\in\mathcal{O}$. We claim that for any $\Lambda_1\in\mathcal{O}$, the geodesic connecting $\Lambda_0$ and $\Lambda_1$ is length minimizing. This can be shown using the proof for the finite dimensional case given in \cite[Section~3.3]{docarmo}. In fact, it uses the following:
	\begin{enumerate}[label=(\alph*)]
		\item\label{dca} The connection is metric and symmetric.
		\item\label{dcb} The exponential map $\exp_{\Lambda_0}:A_{\Lambda_0}\to\mathcal{O}$ is bijective.
		\item\label{dcc} A family of functions $(h_t)\subset A_{\Lambda_0}$ is smooth if and only if the Lagrangian path $(\exp_{\Lambda_0}(h_t))$ is smooth.
	\end{enumerate}
	Condition \ref{dca} holds in our case, by \cite[Theorem~4.1]{solomon-curv}. The existence part of Theorem~\ref{boundary} implies that the exponential map is onto, and the uniqueness part of the same theorem implies it is one to one. Hence, condition \ref{dcb} holds. Condition \ref{dcc} also holds, as can be verified by following the proofs of Theorem~\ref{initial} and Proposition~\ref{solutionwhencylindrical}. Hence, geodesics in $\mathcal{O}$ are indeed length minimizing, and $(\mathcal{O},\overline{\Upsilon})$ is a metric space. By Lemma~\ref{triangle}, the exponential map is an isometry. The theorem follows.
\end{proof}

\subsection*{Acknowledgements}
The authors thank P. Giterman, Y. Rubinstein, P. Seidel and A. Solomon, for helpful conversations. This work was partially supported by ERC Starting Grant 337560 and BSF Grant 2012236.


\providecommand{\bysame}{\leavevmode\hbox to3em{\hrulefill}\thinspace}
\providecommand{\MR}{\relax\ifhmode\unskip\space\fi MR }
\providecommand{\MRhref}[2]{%
	\href{http://www.ams.org/mathscinet-getitem?mr=#1}{#2}
}
\providecommand{\href}[2]{#2}


\noindent
Institute of Mathematics \\
Hebrew University \\
jake@math.huji.ac.il \\

\noindent
Institute of Mathematics \\
Hebrew University \\
amitai.yuval@mail.huji.ac.il \\

\end{document}